\author[A. D. Smith]{Abraham D. Smith}
\address{McGill University, Department of Mathematics and Statistics\\
         Montreal, Quebec  H3A 2K6\\
         Canada}
\email{\href{mailto:adsmith@msri.org}{adsmith@msri.org}}
\urladdr{\href{http://www.math.mcgill.ca/adsmith}%
         {http://www.math.mcgill.ca/adsmith}}
\thanks{This work is supported by an NSF All-Institutes
Postdoctoral Fellowship administered by the Mathematical Sciences Research
Institute through its core grant DMS-0441170. The author is hosted by the
Department of Mathematics and Statistics at McGill University.}
\title[GL(2) PDEs]
{Integrable GL(2) Geometry and Hydrodynamic Partial Differential Equations}
\date{\today}
\subjclass[2000]{58A15, 37K10}
\keywords{GL(2)-structure, hydrodynamic reduction, hyperbolic PDE}
\newtheorem{thm}{Theorem}[section]
\newtheorem{lemma}[thm]{Lemma}
\newtheorem{cor}[thm]{Corollary}
\newtheorem{defn}{Definition}[section]
\newcommand{\lhk}{\mathbin{\hbox{\vrule height1.4pt width4pt depth-1pt
\vrule height4pt width0.4pt depth-1pt}}} 
\newcommand{\pair}[1]{\ensuremath{\left\langle #1 \right\rangle}}
\begin{document}

\begin{abstract}
This article is a local analysis of integrable $GL(2)$-structures of degree 4.
A $GL(2)$-structure of degree $n$ corresponds to a distribution of rational
normal cones over a manifold of dimension $n+1$.  Integrability corresponds to
the existence of many submanifolds that are spanned by lines in the cones.
These $GL(2)$-structures are important because they naturally arise from a
certain family of second-order hyperbolic PDEs in three variables that are
integrable via hydrodynamic reduction.  Familiar examples include the wave
equation, the first flow of the dKP hierarchy, and the Boyer--Finley equation.

The main results are a structure theorem for integrable $GL(2)$-structures, a
classification for connected integrable $GL(2)$-structures, and an equivalence
between local integrable $GL(2)$-structures and Hessian hydrodynamic hyperbolic
PDEs in three variables.

This yields natural geometric characterizations of the wave equation, the
first flow of the dKP hierarchy, and several others.  It also provides an
intrinsic, coordinate-free infrastructure to describe a large class of hydrodynamic
integrable systems in three variables. 
\end{abstract}

\maketitle

\tableofcontents

\section*{Introduction}
A fundamental problem in analysis is to understand why some differential
equations (particularly hyperbolic equations arising from wave-like equations
or from differential geometry) are \emph{integrable}.  This problem is
compounded by the many competing notions of integrability \cite{Zakharov1991}.
Informally, a PDE is called integrable if it can be solved thanks to the
existence of an infinite hierarchy of conservation laws or of a family of
invariant foliations along characteristics.  Integrable PDEs of all types are
well-studied in two (${1+1}$) independent variables; however, examples are
increasingly rare and poorly understood in higher dimensions.

A particularly interesting class of integrable PDEs are those that can be
integrated by decomposing the equation to a set of coupled first-order
equations that represent traveling waves \cite{Tsarev1990}. This technique is
called ``integration by hydrodynamic reduction,'' and it has been extensively
studied for various special classes of second-order PDEs in three or more
independent variables \cite{Burovskiy2008, Ferapontov2009, Ferapontov2003, Ferapontov2004,
Ferapontov2002, Tsarev1993, Tsarev2000}

Consider a second-order hyperbolic equation of the form $F(\xi^i, u, u_i, u_{ij})=0$ on
scalar functions $u(\xi^1, \xi^2, \xi^3)$.   In the method of hydrodynamic
reduction, one hopes to integrate the PDE
${F=0}$ by stipulating that the solution function $u$ (and its derivatives) may
be written as $u(R^1, \ldots, R^k)$ for an \emph{a priori} unknown number $k$
of functions $R^1(\xi), \ldots, R^k(\xi)$ whose derivatives admit the
``commuting'' relations  
\begin{equation}
\label{eqn:hydro}
\frac{\partial}{\partial \xi^2}R^i = \rho_{2}^i(R)\frac{\partial}{\partial
\xi^1}R^i, \qquad
\frac{\partial}{\partial \xi^3}R^i = \rho_{3}^i(R)\frac{\partial}{\partial
\xi^1}R^i
\end{equation}
which also imply
\begin{equation}
\frac1{\rho^i_{2} - \rho^j_{2}}\frac{\partial \rho^i_{2}}{\partial R^j} =
\frac1{\rho^i_{3} - \rho^j_{3}}\frac{\partial \rho^i_{3}}{\partial R^j},
\quad \forall i \neq j.
\label{eqn:hydro2}
\end{equation}
Equations~(\ref{eqn:hydro}) and (\ref{eqn:hydro2}) can be solved by other
methods \cite{Tsarev1990}.  The reduction of ${F=0}$ to
Equation~(\ref{eqn:hydro2}) is called a $k$-parameter hydrodynamic reduction.
The equation ${F=0}$ is called integrable via $k$-parameter hydrodynamic
reduction if it admits an infinite family of $k$-parameter hydrodynamic
reductions and this family is itself parametrized by $k$ functions of one
variable.  The functions $R^i(\xi^1,\xi^2,\xi^3)$ are called Riemann
invariants, and their level sets define the foliations along characteristics
in the informal definition above. See the works cited above, particularly
\cite{Ferapontov2009}, \cite{Ferapontov2003} and \cite{Ferapontov2002}, for
examples and detailed exposition of this technique.  

Many examples of these so-called hydrodynamic equations are known, such as the
wave equation $u_{22}=u_{13}$, the dispersionless Kadomtsev--Petviashvili (dKP)
equation $u_{22}=(u_1- uu_3)_3$, the first flow of the dKP hierarchy
$u_{22}=u_{13}-\frac12(u_{33})^2$, the Boyer--Finley equation $u_{22}+u_{33} =
e^{u_{11}}$, and some well-known integrable hierarchies. Tests for
hydrodynamic integrability also exist, but so far there is no intrinsic,
coordinate-free theory that classifies and generates all of these equations.

For the case of second-order hydrodynamic equations in three variables that
involve only the Hessian of the solution function, recent work by Ferapontov,
\emph{et al.}, reveals a relationship between the integrability of the PDEs, the
symplectic contact symmetries of the equations, and the equations' underlying degree-4
$GL(2)$-structures \cite{Ferapontov2009}.  Integrable $GL(2)$-structures are not
well-known in the literature; they arise in the search for exotic affine
holonomies in differential geometry \cite{Bryant1991a} and a similar conformal
geometry appears useful in the equivalence problem for ODEs
\cite{Nurowski2007}, but the application to PDEs is very recent
\cite{Ferapontov2009, Smith2009}. 

The purpose of this article is to fully analyze the local geometry of
integrable $GL(2)$-structures of degree 4, with an eye towards understanding
PDEs that are integrable via hydrodynamic reduction.  The main result is a
natural classification of local integrable $GL(2)$-structures and the
associated PDEs.  This is achieved using geometry \emph{\`a la Cartan}, namely
exterior differential systems, moving frames, and the method
of equivalence.

Here is a summary of the contents of this article, including abbreviated and
\emph{non-technical} versions of the main theorems:

Section~\ref{background} defines $GL(2)$-structures and their 2-integrability
and 3-integrability.   The property of $k$-integrability involves the
existence of ``many'' foliations by certain submanifolds of dimension $k$.
Section~\ref{background} also introduces the motivating observation by
Ferapontov \emph{et al.} that hyperbolic PDEs in three variables that are
integrable by means of hydrodynamic reductions lead to $GL(2)$-structures that
are both 2-integrable and 3-integrable (abbreviated 2,3-integrable).

Section~\ref{connection} contains a description of the necessary $SL(2)$
representation theory and an explicit solution to the first-order equivalence problem for
$GL(2)$-structures.  Corollary~\ref{coframe} defines a global preferred
connection for $GL(2)$-structures that has essential torsion in
the direct sum of $\mathbb{R}^3$, $\mathbb{R}^7$, $\mathbb{R}^9$, and
$\mathbb{R}^{11}$, each of which is an irreducible $SL(2)$ module.

In Section~\ref{2int}, the exterior differential system describing
2-integrability for a $GL(2)$-structure is analyzed.  
The most important object
in this article, the irreducible $\mathbb{R}^9$-valued torsion, is first emphasized in
Theorem~\ref{int2-n4}.
\newtheorem*{thm2str}{Theorem~\ref{int2-n4}}
\begin{thm2str}
A $GL(2)$-structure is 2-integrable if and only if its torsion takes values only
in $\mathbb{R}^9$.  
\end{thm2str}

Section~\ref{3int} provides Theorem~\ref{int3-n4-str}, a complete local
description of $GL(2)$-structures that are 2,3-integrable. 
The immediate consequence of this theorem is that the value of the torsion at
a single point completely determines the local structure of a 2,3-integrable
$GL(2)$-structure.
\newtheorem*{thm23str}{Theorem~\ref{int3-n4-str}}
\begin{thm23str}
A $GL(2)$-structure is 2,3-integrable if and only if its structure equations are
of the form
\begin{equation}
\begin{split}
\mathrm{d}\omega &= -\theta\wedge\omega + T(\omega\wedge\omega) \\
\mathrm{d}\theta &= -\theta\wedge\theta + T^2(\omega\wedge\omega) \\
\mathrm{d}T &= J(T)(\omega,\theta)
\end{split}
\end{equation}
for an essential torsion $T$ in $\mathbb{R}^9$ and a $9 \times 9$ matrix
$J(T)$ depending on $T$, as in Appendix~\ref{appxJ}.
\end{thm23str}

Section~\ref{moduli} presents a classification of connected 2,3-integrable
$GL(2)$-structures.  This classification is provided by a natural stratification
of $\mathbb{R}^9$ into equivalence classes that are defined by the matrix
$J(T)$.  These equivalence classes are explicitly identified in
Theorem~\ref{classification}.
\newtheorem*{thmClass}{Theorem~\ref{classification}}
\begin{thmClass}
Every connected 2,3-integrable $GL(2)$-structure belongs to exactly one of 55
equivalence classes, which are given by a stratification of $\mathbb{R}^9$
into the root factorization types of real binary octic polynomials.
\end{thmClass}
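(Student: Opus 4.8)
The plan is to use the rigidity of Theorem~\ref{int3-n4-str} to reduce the classification to a single orbit-type invariant, and then to identify that invariant with the root factorization type of a real binary octic. By Theorem~\ref{int3-n4-str} a $2,3$-integrable $GL(2)$-structure is completely encoded by its bundle $\mathcal{F}$, the coframing $(\omega,\theta)$, and the torsion $T\in\mathbb{R}^9$, with $\mathrm{d}T = J(T)(\omega,\theta)$; differentiating repeatedly shows every higher invariant is a universal function of $T$, so by the classical realization/equivalence theorem for $\{e\}$-structures (the consistency $\mathrm{d}^2=0$ being the content of Appendix~\ref{appxJ}) the local geometry is controlled entirely by $T$, and any local equivalence preserves the canonical connection $\theta$, hence $T$, hence the $G$-orbit of $T$ at corresponding points. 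Here $G$ — the stabilizer of the rational normal cone — acts on $\mathbb{R}^9 = \mathrm{Sym}^8(\mathbb{R}^2)$ through the irreducible $SL(2)$-representation of Section~\ref{connection} together with the cone-reversing central involution $T\mapsto -T$. The equivalence classes of the theorem are, by construction, the strata of $\mathbb{R}^9$ cut out by the $G$-invariant type of the $9\times9$ matrix $J(T)$, so the first step is to verify that this type is a well-defined invariant of a connected structure. That it is $G$-invariant is immediate from the $G$-equivariance of $J$; that it is constant over a connected base follows from the Bianchi identity $\mathrm{d}^2T=0$ of Appendix~\ref{appxJ}, which forces $\mathrm{image}\,J(T)$ to be tangent to the $J$-type stratum through $T$, so no integral manifold of the structure can leave its stratum.

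The main obstacle is the identification: one must show that the $G$-invariant stratification of $\mathrm{Sym}^8(\mathbb{R}^2)$ by the type of $J(T)$ coincides with the stratification by root factorization type of the binary octic $T$. Since both stratifications are $G$-invariant and semialgebraic, this reduces to comparing, orbit by orbit, the rank and Jordan structure of the explicit matrix $J(T)$ of Appendix~\ref{appxJ} with the multiplicity pattern of the roots of $T$ on $\mathbb{P}^1(\mathbb{C})$ and with the reality type of those roots — in effect, exhibiting the root data as the invariant-theoretic content of $J(T)$. This is the computational core of the section; I would carry it out by evaluating $J(T)$ on a normal form for $T$ within each stratum, using that the strata are connected and, by Section~\ref{connection}, move freely under $G$: real roots can be slid around $\mathbb{RP}^1$, through infinity when necessary, and complex-conjugate pairs moved through the upper half-plane, all kept distinct. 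Granting this, every connected $2,3$-integrable $GL(2)$-structure has a torsion lying in exactly one stratum, distinct strata are distinct equivalence classes, and realizability of each is again the realization theorem applied with $\mathrm{d}^2=0$.

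The final step — counting the strata — is elementary. A nonzero real binary octic factors over $\mathbb{C}$ into eight linear forms; its $GL(2,\mathbb{C})$-stratum is recorded by the partition of $8$ giving the root multiplicities, and passing to the real structure refines this to the pair consisting of the multiset of multiplicities of the real roots and the multiset of multiplicities of the complex-conjugate pairs (the overall sign absorbed by $T\mapsto -T$). Writing $c\in\{0,1,2,3,4\}$ for the total multiplicity of the complex-conjugate pairs, the real roots contribute a partition of $8-2c$ and the pairs a partition of $c$, so the number of nonzero types is $\sum_{c=0}^{4}p(8-2c)\,p(c)=22+11+10+6+5=54$, with $p$ the partition function; adjoining the single stratum $\{0\}$ — the flat $GL(2)$-structure — gives the $55$ equivalence classes of the theorem.
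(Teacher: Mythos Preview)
Your outline shares the same skeleton as the paper's argument---reduce everything to $T$ via Theorem~\ref{int3-n4-str}, then identify the resulting stratification of $\mathcal{V}_8$ with root factorization types, then count---and your count of $55$ is correct. But the middle step, as written, has a genuine gap.

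You say the equivalence classes are ``by construction'' the strata cut out by the ``$G$-invariant type of the $9\times9$ matrix $J(T)$,'' and later that you would compare ``rank and Jordan structure'' of $J$ on normal forms. This framing is problematic. The leaf-equivalence classes of the paper are not defined as loci of constant $J$-type; they are the \emph{orbits} $\mathcal{O}_J(v)$ of the singular distribution spanned by the columns of $J$ (Lemma~\ref{lemmaT}, Theorem~\ref{leaves}). Rank alone cannot separate them: for instance $\{4,4\}$ and $\{5,3\}$ both have two distinct roots, hence the same $\rank J$, yet they are different leaves. ``Jordan structure'' is not obviously meaningful either, since $J(v)$ is a map $\mathbf{T}_b\mathcal{B}\to\mathbf{T}_v\mathcal{V}_8$ between different spaces, not an endomorphism. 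So the invariant you propose to stratify by is not well-defined, and even if it were, showing that integral leaves are confined to such strata does not show that each stratum is a single leaf.

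The paper closes this gap with a concrete computational lemma you are missing (Lemma~\ref{matrixmiracles}): $\det J(v)$ is a nonzero scalar multiple of the discriminant of $v$, and more generally $\rank J(v)=k{+}1$ when $v$ has $k$ distinct roots. This gives $\dim\mathcal{O}_J(v)=\dim[v]$ for every $v$. With the dimensions matched, the paper then verifies directly, root-type by root-type on representatives, that each column of $J(v)$ lies in $\mathbf{T}_v[v]$; hence $\mathbf{T}_v\mathcal{O}_J(v)=\mathbf{T}_v[v]$ everywhere, and the two partitions of $\mathcal{V}_8$ into submanifolds coincide. Your proposal to ``evaluate $J(T)$ on a normal form within each stratum'' is exactly this second step, but without the rank--discriminant lemma you have no dimension argument and therefore no way to promote the inclusion of tangent spaces to an equality.
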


In Section~\ref{PDEs}, the structure equations from Section~\ref{3int} are
applied to reproduce the integrable PDEs, yielding Theorem~\ref{embed}.
\newtheorem*{thmAllPDE}{Theorem~\ref{embed}}
\begin{thmAllPDE}
Every local 2,3-integrable $GL(2)$-structure arises from a
Hessian hydrodynamic hyperbolic PDE.
\end{thmAllPDE}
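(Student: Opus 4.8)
The plan is to invert the construction of Section~\ref{background} that attaches a $GL(2)$-structure to a Hessian hyperbolic PDE, using the normal form of Theorem~\ref{int3-n4-str} to manufacture the PDE's defining function by a development (Frobenius) argument. Recall the forward map: a hyperbolic equation $F(u_{ij})=0$ on $u(\xi^1,\xi^2,\xi^3)$ produces the Hessian hypersurface $\Sigma_F=\{h\in\mathrm{Sym}^2(\mathbb{R}^3):F(h)=0\}$, a $5$-manifold, together with the cone field $h\mapsto\{\,p\otimes p : \sum_{ij}\partial_{ij}F(h)\,p_ip_j=0\,\}\subset T_h\Sigma_F$; since a smooth plane conic is a $\mathbb{P}^1$ and $p\mapsto p\otimes p$ restricts to it as a quartic Veronese embedding, this cone is a rational normal cone of degree $4$, so $\Sigma_F$ carries a $GL(2)$-structure, which by Section~\ref{background} is $2,3$-integrable when $F=0$ is hydrodynamic. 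It therefore suffices to show that an arbitrary $2,3$-integrable $M^5$ is, locally and $GL(2)$-equivariantly, of the form $\Sigma_F$ for some such $F$.

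Work on the canonical coframe bundle $\mathcal F$ of Corollary~\ref{coframe}, where $(\omega,\theta)$ is a coframing and the structure equations of Theorem~\ref{int3-n4-str} hold with essential torsion $T\in\mathbb{R}^9$. Using $SL(2)\cong\mathrm{Spin}(3)$, decompose the target: $\mathrm{Sym}^2(\mathbb{R}^3)=\mathrm{Sym}^2(V_2)\cong V_4\oplus V_0\cong\mathbb{R}^5\oplus\mathbb{R}$, the $V_0$-summand being the line spanned by the invariant (apolar) linear functional $\ell$ on $\mathrm{Sym}^2(\mathbb{R}^3)$. A Hessian immersion $\Phi:M\to\mathrm{Sym}^2(\mathbb{R}^3)$ matching the cone fields is then the same datum as (i) a development of $\omega$ recording the $V_4$-part of $\Phi$ (present tautologically, the central $\mathbb{R}^\times\subset GL(2)$ only rescaling $F$), together with (ii) a scalar potential $g:M\to\mathbb{R}$ for the $V_0$-part; the PDE is $F=0$ with $\Sigma_F=\mathrm{graph}(g)$, that is, an equation of the shape $\ell(u_{ij})=g(\text{the $V_4$-part of }u_{ij})$. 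Concretely one writes, on $\mathcal F$, a first-order system $\mathrm{d}\Phi=\Psi(\omega,\theta;\Phi,T)$ whose right-hand side is the unique $GL(2)$-equivariant combination of $\omega$ and $\theta$ with coefficients polynomial in $\Phi$ and $T$ that is compatible with the forward functor, and solves it by the Frobenius theorem; the torsion-dependent coefficient of $\omega$ in the $V_0$-slot is then $\mathrm{d}g$. As a check, when $T\equiv 0$ the system forces $g$ constant, so $F=0$ becomes the linear equation $\ell(u_{ij})=\mathrm{const}$, which after an affine change of variables is the wave equation $u_{22}=u_{13}$ --- the flat model.

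The crux is the compatibility step in (ii): one must show that $\mathrm{d}\big(\mathrm{d}\Phi-\Psi(\omega,\theta;\Phi,T)\big)$ vanishes modulo the system, equivalently that the $\mathbb{R}^9$-valued obstruction that a priori appears in $\mathrm{d}\Psi$ is identically zero. This is a finite $SL(2)$-equivariant computation in the irreducible modules $\mathbb{R}^3,\mathbb{R}^7,\mathbb{R}^9,\mathbb{R}^{11}$ of Corollary~\ref{coframe}, and it is precisely here that the particular form of the matrix $J(T)$ of Appendix~\ref{appxJ} (entering through $\mathrm{d}T=J(T)(\omega,\theta)$) must be used, together with $\mathrm{d}\omega=-\theta\wedge\omega+T(\omega\wedge\omega)$ and $\mathrm{d}\theta=-\theta\wedge\theta+T^2(\omega\wedge\omega)$. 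I expect this closedness identity to be the main obstacle. Granting it, the Frobenius theorem produces $\Phi$ on a neighborhood, and the fact that $\Psi$ was chosen compatible with the forward functor guarantees that $\Phi$ is an immersion onto an open piece of a hypersurface $\Sigma_F$ whose cone field is the given $GL(2)$-structure.

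It remains to identify $F=0$ as Hessian, hyperbolic, and hydrodynamic. Hessian and second order are immediate since $F$ is a function of $h\in\mathrm{Sym}^2(\mathbb{R}^3)$ alone. Hyperbolicity follows because the $GL(2)$-structure is by \emph{real} rational normal cones, which forces the characteristic conic $Q_h\subset\mathbb{P}^2$ to carry a real arc and hence to have Lorentzian symbol at every $h$. Finally, $F=0$ is hydrodynamic: the $2$-dimensional integral submanifolds of the $GL(2)$-structure, abundant by $2$-integrability (Theorem~\ref{int2-n4}), push forward under $\Phi$ to the $2$-parameter hydrodynamic reductions of $F=0$, while $3$-integrability supplies the involutivity that, by the analysis underlying Theorem~\ref{int3-n4-str} (equivalently, by \cite{Ferapontov2009}), promotes these to an infinite family parametrized by functions of one variable, as required by the definition of hydrodynamic integrability.
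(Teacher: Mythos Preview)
Your approach is in the same spirit as the paper's---both hinge on a Frobenius/development argument---but the paper's implementation is structured differently and more cleanly. Rather than solving directly for an immersion $\Phi:M\to\mathrm{Sym}^2(\mathbb{R}^3)$ via a system $\mathrm{d}\Phi=\Psi(\omega,\theta;\Phi,T)$ on the homogeneous space, the paper lifts to the group: it constructs an explicit $\mathfrak{csp}(3)$-valued $1$-form $\eta$ on $\mathcal{B}$, with semi-basic block $\alpha$ built from $\omega$ via the inclusion $\mathcal{V}_4\hookrightarrow\mathrm{Sym}^2(\mathcal{V}_2)$, vertical block $\beta$ built from $\varphi,\lambda$ plus a torsion-dependent semi-basic correction $\beta_T(\omega)$, and a symmetric block $\gamma$. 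One then solves $\mathrm{d}\eta+\eta\wedge\eta=0$ for the undetermined coefficients in $\beta_T$ and $\gamma$, and the Fundamental Lemma of Lie Groups produces a bundle map $h:\mathcal{B}'\to CSp(3)$ covering the embedding $i:M'\to\Lambda^o$. Working in the group rather than the quotient linearizes the problem: your nonlinear dependence of $\Psi$ on $\Phi$ is replaced by the single algebraic Maurer--Cartan identity, which is a finite arithmetic check against the closed structure equations of Theorem~\ref{int3-n4-str}.

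Your direct approach could in principle be pushed through, but several steps are underspecified. The claim that there is a ``unique $GL(2)$-equivariant $\Psi$ compatible with the forward functor'' is not justified: equivariance alone does not determine $\Psi$, and since $\Phi$ takes values in a homogeneous space rather than a group, the resulting system is genuinely nonlinear in $\Phi$---precisely what the paper's lift to $CSp(3)$ avoids. Your remark that the $V_4$-part is ``present tautologically'' is also misleading: $\omega$ is a $1$-form, not a function, and integrating it to coordinates on the image already requires the full compatibility check (indeed $\mathrm{d}\omega\neq 0$), so this is not separate from, nor easier than, producing the scalar $g$. Finally, the ``$\mathbb{R}^9$-valued obstruction'' is unclear; in the paper's formulation the obstruction is $\mathfrak{csp}(3)$-valued and its vanishing is a matter of solving for $\beta_T$ and $\gamma$, not a hidden identity in $\mathcal{V}_8$. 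The verification that the resulting PDE is hyperbolic and hydrodynamic is handled, as you note, by the real-cone condition and by Theorem~\ref{thm:F23int} respectively.
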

Therefore, the classification in Theorem~\ref{classification} is also a
classification of Hessian hydrodynamic hyperbolic PDEs.  Some previously known
examples of such PDEs are identified in the new classification, and several
new integrable PDEs are constructed.  Also, a relationship between Hessian
hydrodynamic hyperbolic PDEs in three variables and hyperbolic PDEs in two
variables (Monge--Amp\`ere, Goursat, or generic type) is observed.

This work would have been impossible to complete without the uncanny speed and
accuracy of computer algebra software, and there is no sense in retyping
dozens of pages of formulas and identities.  Thus, a reader looking for
explicit computational
details may be disappointed by this article (in which case I direct the
reader to \cite{Smith2009}).  However, most proofs rely only on basic concepts from
differential geometry, linear algebra, and representation theory, and a
reader comfortable with the standard methods of exterior differential systems
(and their various software implementations) can readily supply the
computational details.  A \textsc{Maple} file containing the structure
equations appearing in Theorem~\ref{int3-n4-str} and the Maurer--Cartan form
appearing in Theorem~\ref{embed} will remain available at \cite{GL2maplecode}
as long as possible.  This file should also be bundled with this article on
\href{http://arxiv.org/abs/0912.2789}{arXiv.org}.  The content of this file allows
rapid reproduction of all results following Theorem~\ref{int3-n4-str}.

Please note that the terms ``hyperbolic'' and ``integrable'' are heavily used
in slightly different contexts throughout this article.  ``Hyperbolic'' refers
to hyperplanes having maximal intersection with certain projective
varieties (for example, in Lemma~\ref{hyperbolics}), to PDEs in both two
and three independent variables with appropriate leading symbol (for example, the
wave equation), and to exterior differential systems with appropriate tableaux
(for example, in the proof of Theorem~\ref{bisecant}).  The term
``integrable'' refers both to PDEs that admit infinitely many exact solutions
and to $GL(2)$-structures that admit many secant submanifolds (submanifolds
which one might be tempted to also call ``hyperbolic'').  Of course, all of
these concepts are intimately related, so the standard overloading of these
definitions is reasonable.

I owe many thanks to Robert Bryant and Niky Kamran for their wisdom,
guidance, and support, to Dennis The and Eugene Ferapontov for their many helpful comments, 
and to Jeanne Clelland, whose ``Cartan'' package for
\textsc{Maple} made the computations bearable.

\section{Background}
\label{background}

Let $\mathcal{V}_n$ denote the vector space of degree $n$ homogeneous
polynomials in $x$ and $y$ with real coefficients.  Identify $\mathcal{V}_n$
with $\mathbb{R}^{n+1}$ using the terms from the binomial theorem to produce a
basis; for example, $\mathcal{V}_4 \to \mathbb{R}^5$ by
\begin{equation} 
v_{-4}\ x^4 + v_{-2}\ 4x^3y + v_{0}\ 6x^2y^2 
+ v_{2}\ 4xy^3 + v_{4}\ y^4 \mapsto (v_{-4}, v_{-2},
v_{0},v_{2},v_{4}) \in \mathbb{R}^{5}. \end{equation}

Let $M$ denote a 5-dimensional smooth manifold, and let $\mathcal{F}$ denote
the $\mathcal{V}_4$-valued coframe bundle over $M$, so the fiber
$\mathcal{F}_p$ is the set of all isomorphisms $\mathbf{T}_pM \to
\mathcal{V}_4$.  The coframe bundle $\mathcal{F}$ is a principal right
$GL(\mathcal{V}_4)$ bundle.

Recall that $\mathcal{V}_n$ is the unique irreducible representation of
$\mathfrak{sl}(2)$ of dimension $n{+}1$.  The action of $\mathfrak{sl}(2)$ on
$\mathcal{V}_n$ is generated by  
\begin{equation} X= y\frac{\partial}{\partial x},\ Y=
-x\frac{\partial}{\partial y},\ \text{and}\ H=
x\frac{\partial}{\partial x} - y \frac{\partial}{\partial y}.  \end{equation}
\begin{defn}
A $\boldsymbol{GL(2)}$\textbf{-structure} $\mathcal{B} \to M$ is a reduction of $\mathcal{F}$ with
fiber group $GL(2)\subset GL(\mathcal{V}_n)$ infinitesimally generated by $X$, $Y$,
$H$, and the scaling action $I$.  
\label{GL2def}
\end{defn}
Note that throughout this article, the symbols $GL(2)$ and $\mathfrak{gl}(2)$
are used to denote the abstract Lie group and its Lie algebra as well as any
particular irreducible representations $GL(2) \subset GL(\mathcal{V}_n)$ and
$\mathfrak{gl}(2) \subset \mathfrak{gl}(\mathcal{V}_n)$.
Since $SL(2)$ has a unique irreducible representation $\mathcal{V}_n$ in each
dimension $n{+}1$, it is always clear from context which representation of
$GL(2)$ is being considered.

Generally, a $GL(2)$-structure is said to have ``degree $n$'' if the base space
$M$ has dimension $n{+}1$.  The degree 3 case is thoroughly studied in
\cite{Bryant1991a}, and various observations are made for all $n$ in
\cite{Smith2009}.  This article considers only the degree 4 case.  The
$PGL(2)$ action generated by $X$, $Y$, and $H$ is the symmetry group of a
rational normal curve in $\mathbb{P}(\mathcal{V}_4)$.  A rational normal curve
is a curve of degree $n$ in $\mathbb{P}(\mathbb{R}^{n+1})$. All such curves are
$PGL({n+1})$-equivalent to $\{[g^n:g^{n-1}h:\cdots:gh^{n-1}:g^n]\}$
\cite{Harris1995}.  The de-projectivization of the rational normal curve is
the rational normal cone, which has a $GL(2)$ symmetry group and is usually
described as $\{(g^n,g^{n-1}h,\ldots,gh^{n-1},h^n)\}\subset \mathbb{R}^{n+1}$.
Thus, the geometric content of Definition~\ref{GL2def} is contained in the
following lemma.

\begin{lemma}
A $GL(2)$-structure $\mathcal{B} \to M^5$ is equivalent to a distribution of rational
normal cones $\mathbf{C} \subset \mathbf{T}M$.  For any $b \in \mathcal{B}_p$,
$b(\mathbf{C}_p) = \{ (gx+hy)^4:g,h\in \mathbb{R}\} \subset \mathcal{V}_4$.
\label{GL2cone} 
\end{lemma}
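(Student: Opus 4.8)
The plan is to recognize Lemma~\ref{GL2cone} as the standard dictionary between $G$-structures and $G$-invariant geometric data, once the invariant datum is identified. Let $\mathbf{C}_0 := \{(gx+hy)^4 : g,h \in \mathbb{R}\} \subset \mathcal{V}_4$ be the standard rational normal cone — the affine cone over the rational normal quartic $\{[g^4:g^3h:g^2h^2:gh^3:h^4]\} \subset \mathbb{P}(\mathcal{V}_4)$ — and let $G_0 \subseteq GL(\mathcal{V}_4)$ be the subgroup of linear automorphisms $\Phi$ with $\Phi(\mathbf{C}_0) = \mathbf{C}_0$. The key algebraic claim, proved below, is that $G_0$ equals the image of $GL(2)$ under the irreducible representation $\mathcal{V}_4 = \mathrm{Sym}^4$. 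Granting this, the lemma is formal: given a $GL(2)$-reduction $\mathcal{B} \subseteq \mathcal{F}$, set $\mathbf{C}_p := b^{-1}(\mathbf{C}_0) \subseteq \mathbf{T}_pM$ for any $b \in \mathcal{B}_p$; this is independent of $b$ precisely because $\mathcal{B}_p$ is a single right $GL(2)$-orbit and $GL(2)=G_0$ fixes $\mathbf{C}_0$, so $\mathbf{C}$ is a well-defined smooth distribution of rational normal cones with $b(\mathbf{C}_p)=\mathbf{C}_0$ for all $b \in \mathcal{B}_p$. Conversely, a smoothly varying cone $\mathbf{C}_p \subseteq \mathbf{T}_pM$ that is linearly equivalent to $\mathbf{C}_0$ recovers $\mathcal{B}$ via $\mathcal{B}_p := \{b \in \mathcal{F}_p : b(\mathbf{C}_p) = \mathbf{C}_0\}$, which is a $G_0 = GL(2)$-torsor; smoothness and the principal-bundle structure are automatic, since this is a section of the associated bundle $\mathcal{F} \times_{GL(\mathcal{V}_4)} \big(GL(\mathcal{V}_4)/G_0\big)$. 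These two constructions are mutually inverse, giving the asserted equivalence.

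It remains to verify $G_0 = GL(2)$. One inclusion is immediate: $GL(2)$ acts on $\mathcal{V}_4$ by substitution on the variables $x,y$, so for $A \in GL(2)$ we have $A\cdot(gx+hy)^4 = \ell^4$ where $\ell$ is again a linear form; hence $A$ preserves $\mathbf{C}_0$. The central scaling $I$ obviously preserves $\mathbf{C}_0$ as well, so $GL(2) \subseteq G_0$.

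For the reverse inclusion I would use the classical description of the symmetries of a rational normal curve. The set $\mathbf{C}_0 \setminus\{0\}$ is a smooth connected surface whose projectivization is the rational normal quartic $C_4 \cong \mathbb{P}^1$, which is nondegenerate in $\mathbb{P}(\mathcal{V}_4)\cong\mathbb{P}^4$. Any $\Phi \in G_0$ descends to a projective automorphism $\bar\Phi$ of $\mathbb{P}^4$ preserving $C_4$, hence restricts to an automorphism of $C_4\cong\mathbb{P}^1$, i.e. an element of $PGL(2)$. Because $C_4$ is nondegenerate it contains six points in general linear position, $\bar\Phi$ is determined by its action on these, hence by $\bar\Phi|_{C_4}$, so $\bar\Phi$ coincides with the projective transformation of $\mathbb{P}(\mathrm{Sym}^4)$ induced by the corresponding element of $PGL(2)$. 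Thus $\bar\Phi$ lies in the image of $PGL(2)$, and $\Phi$ differs from a lift of it by an element of the kernel $\mathbb{R}^\times\cdot\mathrm{id}$ of $GL(\mathcal{V}_4)\to PGL(\mathcal{V}_4)$, namely the scaling subgroup; hence $\Phi \in GL(2)$. (Over $\mathbb{R}$ one checks that the two components of $GL(2,\mathbb{R})$ and the sign ambiguity in taking fourth roots of linear forms cause no trouble, since $(-\ell)^4=\ell^4$ and $-\mathrm{id}\in GL(2)$.)

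The main obstacle — modest as it is — is exactly this last identification $G_0 = GL(2)$: the inclusion $GL(2)\subseteq G_0$ is a one-line check, but showing the stabilizer is no larger requires the automorphism group of the rational normal curve together with care about real forms and the passage between linear maps of $\mathcal{V}_4$ and projective maps of $\mathbb{P}(\mathcal{V}_4)$. Everything else is the formal correspondence between reductions of the coframe bundle and invariant fields of geometric objects, which involves no computation. As an alternative to the projective argument, one may proceed directly: a linear $\Phi$ preserving $\mathbf{C}_0$ induces a map $\ell \mapsto m(\ell)$ on linear forms determined up to sign by $m(\ell)^4 = \Phi(\ell^4)$, and differentiating $\Phi\big((\ell_1+t\ell_2)^4\big)=m(\ell_1+t\ell_2)^4$ at $t=0$ forces $m$ to be linear up to scale, so $\Phi$ arises from $GL(2)$ acting on the two-dimensional space of linear forms; this hands-on route is the one better suited to the explicit frame computations used later in the paper.
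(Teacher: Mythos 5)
Your proposal is correct and takes essentially the approach the paper leaves implicit: the paper states Lemma~\ref{GL2cone} as a repackaging of the classical fact (cited to Harris) that the linear symmetry group of a rational normal cone is exactly the irreducible $GL(2)$, combined with the standard dictionary between reductions of the coframe bundle and invariant fields of cones, and you simply supply the details of both halves. The one point to tighten is in your reverse inclusion: the residual scalar in the lift from $PGL(\mathcal{V}_4)$ must be \emph{positive}, since a negative multiple of a nonzero fourth power is not a fourth power, and only positive scalars (being fourth powers of central elements of $GL(2)$) lie in the image of the representation --- this is precisely the $\pm I$ subtlety the paper flags just after the lemma.
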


\begin{defn}[Integrability]
Let $\mathcal{B} \to M$ be a $GL(2)$-structure.
A $k$-dimensional linear subspace $E \subset \mathbf{T}_pM$ is
\textbf{k-secant} if $E \cap \mathbf{C}_p$ is $k$ distinct lines.
Equivalently, $E$ is k-secant if $E$ is spanned by $k$ vectors in $\mathbf{C}_p$.  
A $k$-dimensional submanifold $N \subset M$ is
\textbf{k-secant} if $\mathbf{T}_pN$ is a k-secant subspace of $\mathbf{T}_pM$
for every $p \in N$.
A $GL(2)$-structure is \textbf{k-integrable} if, for every k-secant linear
subspace $E \in Gr_k(\mathbf{T}M)$, there exists a $k$-secant submanifold $N$
with $E=\mathbf{T}_pN$.
\end{defn}
Locally, 1-integrability is uninteresting, as it is simply the existence of a
local flow for a vector field.  Since $M$ has dimension 5, being $4$-secant is
an open condition on $E \in Gr_4(\mathbf{T}M)$; however, the condition is
closed for $k{=}2$ (bi-secant) and $k{=}3$ (tri-secant).  A $GL(2)$-structure
that is both 2-integrable and 3-integrable is called ``2,3-integrable,'' or
simply ``integrable.''

Notice that all the definitions are projectively invariant.  Indeed, one could
equivalently study the principal right $PGL(2)$ bundle over $M$ defined by the
symmetries of rational normal curves in $\mathbb{P}\mathbf{T}M$.  When
considering the de-projectivized $GL(2)$ geometry, one occasionally
encounters an additional $\mathbb{Z}/2\mathbb{Z}$ symmetry by $\pm I$. For the
purposes of this article, it is easier to proceed using affine geometry and
deal with this symmetry when it arises.  

The geometric content of this article is a complete local description and
classification of $GL(2)$-structures of degree 4 that are 2,3-integrable.  Such
$GL(2)$-structures are particularly interesting because of their deep connection
to integrable PDEs.  The remainder of this section outlines how Ferapontov,
Hadjikos, and Khusnutdinova obtain 2,3-integrable $GL(2)$-structures from
certain hydrodynamic PDEs in \cite{Ferapontov2009}.

\subsection{Hessian Hydrodynamic Hyperbolic PDEs}
The space of 1-jets over $\mathbb{R}^3\to\mathbb{R}$ is $\mathbb{J}^1=
\{(\xi^i, z, p_i)\}$ where $1 \leq i,j \leq 3$.  The space of 2-jets is
$\mathbb{J}^2=  \{ (\xi^j, z, p_i, U_{ij} ) \}$ where $1\leq i,j \leq 3$ and
$U_{ij}=U_{ji}$.   The contact system on $\mathbb{J}^2$ is the differential
ideal $\mathcal{J}^2$ generated by  
\begin{equation}
\{ 
\mathrm{d}z - p_i \mathrm{d}\xi^i,\ 
\mathrm{d}p_i - U_{ij}\mathrm{d}\xi^j
\}. 
\label{contact}
\end{equation}
A second-order PDE on ${u:\mathbb{R}^3 \to \mathbb{R}}$ is equivalent to the level
set $F^{-1}(0)$ of some smooth function $F:\mathbb{J}^2\to \mathbb{R}$ on
which the projection $\mathbb{J}^2 \to \mathbb{J}^1$ is a submersion, and a
solution is a function $u:\mathbb{R}^3 \to \mathbb{R}$ whose jet-graph is a
subset of $F^{-1}(0)$.

Recall that a section of $\mathbb{J}^2$ is the jet-graph of a function if and
only if it is an integral of the contact system.
A contact transformation on $\mathbb{J}^2$ is a diffeomorphism
$\psi:\mathbb{J}^2 \to  \mathbb{J}^2$ such that $\psi^*(\mathcal{J}^2) =
\mathcal{J}^2$, and one studies PDEs geometrically by examining the properties
of $F^{-1}(0)$ that are invariant under contact transformations.

By a theorem of B\"acklund \cite[Theorem~4.32]{Olver1995}, every contact
transformation on $\mathbb{J}^2$ is the prolongation of a contact
transformation on $\mathbb{J}^1$.  Therefore, for any contact transformation
$\psi$, there exist functions $A_{ij}$,
$B_{ij}$, $C_{ij}$, $D_{ij}$, $a_i$, $m_i$, $\hat{a}_i$, $\hat{m}_i$, and $c$,
(written as four matrices, four column vectors, and a scalar) such that
\begin{equation}
\psi^*\begin{pmatrix}
\mathrm{d}\xi \\ \mathrm{d}z \\ \mathrm{d}p
\end{pmatrix}=
\begin{pmatrix}
B & \hat{m} & C \\
m^t & c & a^t \\
A & \hat{a} & D 
\end{pmatrix}\begin{pmatrix}
\mathrm{d}\xi \\ \mathrm{d}z \\ \mathrm{d}p
\end{pmatrix}.
\label{conformalcoords}
\end{equation}
The assumption that $\psi$ is a contact transformation implies 
that the symplectic form $\sigma = \mathrm{d}p_1\wedge\mathrm{d}\xi^1 +
\mathrm{d}p_2 \wedge\mathrm{d}\xi^2 + \mathrm{d}p_3\wedge\mathrm{d}\xi^3$ is
preserved up to scale, so $\psi^*(\sigma) = \lambda \sigma$, $\lambda \neq 0$.
One can compute $\tilde{U}_{ij} = \psi^*(U_{ij})$ by noting that
$\psi^*(\mathrm{d}p_i - U_{ij}\mathrm{d}\xi^j) \equiv 0$ modulo
$\mathcal{J}^2$, so in matrix notation
\begin{equation}
\begin{split}
\psi^*(\mathrm{d}p) - \tilde{U}\psi^*(\mathrm{d}\xi)
&= 
(A\mathrm{d}\xi + \hat{a}\mathrm{d}z + D \mathrm{d}p) -
\tilde{U}(B\mathrm{d}\xi + \hat{m}\mathrm{d}z + C \mathrm{d}p) \\
&\equiv
(A + \hat{a}p^t + DU)\mathrm{d}\xi - \tilde{U}(B + \hat{m}p^t +
CU)\mathrm{d}\xi.
\end{split}
\end{equation}
In particular, under the non-degeneracy assumption $\psi^*(\mathrm{d}\xi^1 \wedge
\mathrm{d}\xi^2 \wedge\mathrm{d}\xi^3) \neq 0$, this yields 
\begin{equation}\tilde{U} = (A +
\hat{a} p^t + DU)(B+\hat{m}p^t + CU)^{-1}.\end{equation}  

For this article, consider only PDEs on $u:\mathbb{R}^3 \to \mathbb{R}$ of the
form
\begin{equation}
F(u_{11}, u_{12}, u_{13}, u_{22}, u_{23}, u_{33}) =0.
\label{PDE}
\end{equation}
Instead of considering the orbit of this PDE under all contact
transformations, consider only those contact transformations that preserve the
class of Hessian-only equations, those of the form in Equation~(\ref{PDE}).   That is, consider
only those contact transformations $\psi$ such that $\psi^*(\mathrm{d}U_{ij})
\equiv 0$ modulo $\{\mathrm{d}U_{kl}\}$.  Then it must be that $\hat{a}=\hat{m}=0$ in
Equation~(\ref{conformalcoords}).
Thus, the contact transformations that preserve the Hessian-only form of
Equation~(\ref{PDE}) are elements of the conformal symplectic group, 
\begin{equation}
\begin{split}
CSp(3) 
&= \{ g \in GL(6,\mathbb{R}) ~:~ \sigma(gv,gw) = \lambda\sigma(v,w)\ \forall
v,w\ \text{(any $\lambda\in \mathbb{R}^*$)}\}\\
&= \left\{ g= \begin{pmatrix}
B & C \\ 
A & D
\end{pmatrix} ~:~ 
0=A^tB - B^tA =D^tC-C^tD, \lambda I_3=D^tB - C^tA
\right\}.
\end{split}
\label{Sp3}
\end{equation}
In the most general case, the completed domain of all Hessian-only $F$'s is the Lagrangian Grassmannian,
\begin{equation}
\Lambda = \mathbb{J}^2/\mathbb{J}^1 = \{ U \in Gr_3(\mathbb{R}^6) ~:~ \sigma|_U = 0 \},
\end{equation}
with local coordinates given, for example, by its non-$\xi$-vertical open subset
\begin{equation}
\Lambda^o = \{ U \in \Lambda :
\mathrm{d}\xi^1\wedge\mathrm{d}\xi^2\wedge\mathrm{d}\xi^3|_U \neq 0 \}.
\end{equation}

The conformal symplectic group forms a bundle, $\Pi:CSp(3) \to \Lambda$.  In
fact, when considering local transformations of PDEs, only the transformations in $CSp(3)^o =
\Pi^{-1}(\Lambda^o)$ are permissible, as the condition
$\mathrm{d}\xi^1\wedge\mathrm{d}\xi^2\wedge\mathrm{d}\xi^3 \neq 0$ must be
preserved.  As above, one could eliminate the conformal factor and consider
only $Sp(3)^o$ actions by projectivizing, but there is little utility in doing
so.

To be explicit regarding the bundle, suppose $U \in \Lambda^o$.
The open manifold $\Lambda^o$ has coordinates given by the components of symmetric matrices,
$U_{ij}=U_{ji}$, and $\mathrm{d}p_i \equiv U_{ij}
\mathrm{d}\xi^j$ modulo $\mathcal{J}^2$. 
Then 
\begin{equation}
\psi^*\begin{pmatrix}
\mathrm{d} \xi\\ \mathrm{d}p
\end{pmatrix} =
\begin{pmatrix}
B & C \\
A & D
\end{pmatrix}
\begin{pmatrix}
\mathrm{d} \xi\\ \mathrm{d}p
\end{pmatrix}.
\label{Sp3action}
\end{equation}
The assumption that $\mathrm{d}\xi$ and $\psi^*(\mathrm{d}\xi)$ both have maximum
rank when pulled back to $N$ implies that $(B+CU)$ is nonsingular.  Moreover,
Equation~(\ref{Sp3action}) shows that 
$CSp(3)^o$ acts on $\Lambda^o$ by 
\begin{equation}
g:U \mapsto g(U) = (A+DU)(B+CU)^{-1}.
\label{Sp3actionU}
\end{equation}
The fiber subgroup over $U \in \Lambda^o$ is the stabilizer of $U$, so
\begin{equation}
\Pi^{-1}(U) = \{ g : U = g(U)=(A+DU)(B+CU)^{-1}\} \cong \{ g :
A=0\}.\label{Sp3Fiber}\end{equation}
Because $I_6$ is in the fiber over $0 \in \Lambda^o$, the projection
$\Pi:CSp(3)^o \to \Lambda^o$ can be computed from Equation~(\ref{Sp3actionU}) as
\begin{equation} \Pi:g \mapsto g(0) = AB^{-1}. \label{Sp3U}
\end{equation}

The most important fact about Equation~(\ref{PDE}) is that $M=F^{-1}(0)$
admits a natural $GL(2)$-structure wherever the PDE is hyperbolic.  Recall
that a second-order PDE is called hyperbolic if its leading symbol is a
non-degenerate matrix with split signature, and this property is invariant
under contact transformations.
A related notion of hyperbolicity for hyperplanes sitting in
projective space is needed to describe the induced $GL(2)$-structure.
The complex Veronese variety is the 2-dimensional projective variety
\begin{equation} \{ [Z_1Z_1: Z_1Z_2 : Z_1Z_3:  Z_2Z_3: Z_3Z_3], 
Z\in \mathbb{CP}^2 \} \subset \mathbb{CP}^5.  \end{equation} The intersection
of a generic hyperplane with the Veronese variety is a 1-dimensional
rational normal curve, and this curve is uniquely given by such an
intersection.  The real case needed for the present PDE theory
requires a bit more detail to describe accurately.  Consider $\mathbb{RP}^5$
with coordinates $[W_1: \cdots: W_6]$, $\mathbb{RP}^3$ with coordinates $[Z_1:
Z_2: Z_3]$, and the Veronese variety defined over $\mathbb{R}$ as above.  A
generic hyperplane in $\mathbb{RP}^5$ is defined uniquely (up to scale and sign) by a
single equation 
\begin{equation}a_{11}W_1 + a_{12}W_2 + a_{13}W_3 + a_{22}W_4 + a_{23} W_5  +
a_{33}W_6 = 0.\label{hypcut}\end{equation}The intersection of this hyperplane with the Veronese variety
yields an equation on $\mathbb{R}^3$: \begin{equation} a_{11} (Z_1)^2 + a_{12}
Z_1Z_2 + a_{13}Z_1Z_3 + a_{22}(Z_2)^2 + a_{23}Z_2Z_3 + a_{33}(Z_3)^2 = 0.
\label{quadric} \end{equation} Depending on the (real) coefficients $a_{ij}$,
Equation~(\ref{quadric}) may or may not have real solutions $Z$.
If Equation~(\ref{quadric}) has real solutions, then the de-projectivized
solution in $\mathbb{R}^3$ is a real quadric surface.  This quadric may or may
not be degenerate.  The existence of real nondegenerate solutions is an open
condition on the hyperplane in the topology of $Gr_5(\mathbb{R}^6)$.  If this
condition is satisfied, the hyperplane defined by $\{a_{ij}\}$ is called
hyperbolic.  The precise algebraic condition for hyperbolicity is that the
real symmetric matrix $(a_{ij})$ is nonsingular and has split signature
\cite{CRC}.  Using the identifications
$\mathbb{R}^6 = Gr_5(\mathbb{R}^6) = \mathrm{Sym}^2(\mathbb{R}^3)$, the cone over
the Veronese variety (Veronese cone) is identified with the space of symmetric
matrices $(a_{ij})$ such that $\mathop{\mathrm{rank}}(a_{ij})\leq 1$.
A hyperbolic hyperplane in $\mathbb{R}^6$ intersects the Veronese cone in a
rational normal cone, and every rational normal cone in $\mathbb{R}^6$ can be
written (uniquely, up to scale) this way \cite{Harris1995}. 

\begin{lemma}
Let $F:\Lambda^o \to \mathbb{R}$ be a smooth function, and suppose $U \in
\Lambda^o$ such that $F(U)=0$, $\mathrm{d}F_U\neq 0$, and
$\ker(\mathrm{d}F_U)$ is hyperbolic as a hyperplane in
$\mathbf{T}_U\Lambda^o~=~\mathrm{Sym}^2(\mathbb{R}^3)$.  Then there is an
open 5-dimensional submanifold $M \subset \Lambda^o$ defined by $F|_M =0$ in a
neighborhood of $U$, and $M$ admits a distribution $\mathbf{C}$ of rational
normal cones.  That is, $M$ admits a $GL(2)$-structure.
\label{hyperbolics}
\end{lemma}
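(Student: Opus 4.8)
The plan is to obtain $M$ from the implicit function theorem and then build the cone distribution $\mathbf{C}$ by intersecting $\mathbf{T}M$ fiberwise with the characteristic Veronese cone field on $\Lambda^o$, using hyperbolicity (and its openness) to guarantee that each fiber is a rational normal cone, and the homogeneity of hyperbolic hyperplanes to guarantee smoothness. First, since $\mathrm{d}F_U \neq 0$, the level set $M := F^{-1}(0)$ is a smooth embedded hypersurface of $\Lambda^o$ near $U$, so $\dim M = 5$ and $\mathbf{T}_p M = \ker(\mathrm{d}F_p)$ for every $p \in M$ near $U$. The global $1$-forms $\mathrm{d}U_{ij}$ identify every $\mathbf{T}_p\Lambda^o$ with $\mathrm{Sym}^2(\mathbb{R}^3)$, which contains the Veronese cone $\mathbf{V} = \{(Z_iZ_j) : Z \in \mathbb{R}^3\}$ of symmetric matrices of rank $\leq 1$; set $\mathbf{C}_p := \mathbf{T}_p M \cap \mathbf{V}$. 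By the discussion preceding the lemma, the hyperbolicity of $\ker(\mathrm{d}F_U)$ says precisely that $\mathbf{C}_U$ is a rational normal cone, and since hyperbolicity is an open condition on hyperplanes in $\mathrm{Sym}^2(\mathbb{R}^3)$ and $p \mapsto \ker(\mathrm{d}F_p)$ is continuous, I may shrink $M$ so that $\mathbf{C}_p$ is a rational normal cone for every $p \in M$.

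It remains to check that $p \mapsto \mathbf{C}_p$ is smooth, that is, that $\mathcal{B} := \{\, b \in \mathcal{F}_p : b(\mathbf{C}_p) = \{(gx+hy)^4\},\ p \in M \,\}$ is a smooth principal $GL(2)$-subbundle of $\mathcal{F}|_M$; by Lemma~\ref{GL2cone} this is exactly the assertion that $M$ carries a $GL(2)$-structure. I would argue this by homogeneity: $GL(3,\mathbb{R})$ acts on $\mathrm{Sym}^2(\mathbb{R}^3)$ by $Q \mapsto A^{t} Q A$, preserving $\mathbf{V}$, and by Sylvester's law of inertia the hyperbolic hyperplanes form a single open orbit $\mathcal{H}$ under the induced action on $Gr_5(\mathrm{Sym}^2\mathbb{R}^3)$. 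Hence the orbit map $GL(3,\mathbb{R}) \to \mathcal{H}$ is a smooth submersion with closed stabilizer, so over a neighborhood of $U$ the map $p \mapsto \ker(\mathrm{d}F_p)$ lifts to a smooth map $p \mapsto A_p \in GL(3,\mathbb{R})$ carrying a fixed reference hyperbolic hyperplane $H_0$ to $\ker(\mathrm{d}F_p)$; the associated automorphisms of $\mathrm{Sym}^2(\mathbb{R}^3)$ fix $\mathbf{V}$ and send the fixed rational normal cone $H_0 \cap \mathbf{V}$ to $\mathbf{C}_p$. Post-composing with a fixed linear isomorphism $H_0 \to \mathcal{V}_4$ that carries $H_0 \cap \mathbf{V}$ to $\{(gx+hy)^4\}$ then produces a smooth section of $\mathcal{F}|_M$ lying in $\mathcal{B}$, so $\mathcal{B}$ is the $GL(2)$-orbit of a smooth section, hence a smooth subbundle.

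The main obstacle is exactly this smoothness point: slicing the fixed Veronese cone $\mathbf{V}$ by a moving hyperplane need not vary smoothly in general, so one must use the transversality hidden in the word ``hyperbolic.'' Packaging it as the statement that all hyperbolic hyperplanes lie in one $GL(3,\mathbb{R})$-orbit makes the tautological family of slices over $\mathcal{H}$ a homogeneous, hence smooth, family of rational normal cones, which can be pulled back along the smooth Gauss-type map $p \mapsto \ker(\mathrm{d}F_p)$. The remaining ingredients — the implicit function theorem, the openness of hyperbolicity, and the algebro-geometric identification of a hyperbolic slice of the Veronese cone with a rational normal cone — are either immediate or already established in the excerpt.
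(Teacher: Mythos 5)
Your proposal is correct and follows essentially the same route as the paper: the implicit function theorem produces $M$, and $\mathbf{C}$ is obtained by intersecting the tangent hyperplanes $\ker(\mathrm{d}F_p)$ with the fixed Veronese cone field, with openness of hyperbolicity guaranteeing rational normal cones near $U$. The only difference is in how smoothness of the cone distribution is justified: where the paper appeals to Cartan's result that $CSp(3)$ is the frame bundle preserving the Veronese cone field, you argue directly that hyperbolic hyperplanes form a single open $GL(3,\mathbb{R})$-orbit (Sylvester) and lift the Gauss map $p \mapsto \ker(\mathrm{d}F_p)$ through the orbit map, which is a sound and somewhat more self-contained way to supply that detail.
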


\begin{proof}
Aside from the condition on hyperbolicity of the tangent space, the lemma is
simply a statement of the implicit function theorem.   Here is an explanation
of hyperbolicity and its relation to $GL(2)$-structures.

Fix a coframe $s$ on $\Lambda^o$ such that $s_U:\mathbf{T}_U \Lambda^o \to
\mathrm{Sym}^2(\mathbb{R}^n)$.  (An obvious choice is $s = \mathrm{d}U$, using
the symmetric matrices as coordinates for $\Lambda^o$.)  So, there is a 
distribution of 3-dimensional Veronese cones defined by $\{ P \in \mathbf{T}\Lambda^o :
\mathop{\mathrm{rank}}s(P) \leq 1 \}$.  This distribution of cones is
$CSp(3)$-invariant, for in the first part of \cite[Theorem XX]{Cartan1909},
Cartan proves that the above bundle $CSp(3)$ is exactly the frame bundle over
$\Lambda$ whose action on $\mathbf{T}_U \Lambda \simeq \mathrm{Sym}^2(\mathbb{R}^n)$ is
to act irreducibly on the coefficients of the equation of a cone, for example
Equation~(\ref{quadric}).

In the context of the lemma, Equation~(\ref{hypcut}) describes the
intersection of $\ker(\mathrm{d}F_U)$ with the Veronese cone, so the symmetric
matrix $(a_{ij})$ is, up to scale, the leading symbol of $F$ at $U$.  Therefore,
if $U$ is a regular point for the regular value $0$ of ${F:\Lambda^o \to
\mathbb{R}}$ and if the PDE ${F=0}$ is hyperbolic at $U$, then $F^{-1}(0)$
admits a field of rational normal cones near $U$ given by these intersections.
\end{proof}

The fact that every hyperbolic PDE of the form in Equation~(\ref{PDE}) admits
a $GL(2)$-structure would only be an algebraic curiosity, except that the
integrability of the PDE is intimately related to the 2,3-integrability of the
$GL(2)$-structure.

\begin{thm}[Theorem 3 in \cite{Ferapontov2009}]
Fix a hyperbolic PDE of the form in Equation~(\ref{PDE}) and its corresponding
$GL(2)$-structure ${\pi:\mathcal{B} \to M}$.  Then $\mathcal{B}$ is 2-integrable.
Moreover, the PDE is integrable via 3-parameter hydrodynamic reductions if and
only if $\mathcal{B}$ is also 3-integrable.  \label{thm:F23int} 
\end{thm}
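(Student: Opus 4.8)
Since this is Theorem~3 of \cite{Ferapontov2009}, a full proof is given there; the plan below is a reconstruction phrased so as to fit the framework of the rest of this article. The idea is to recognize $k$-integrability of $\mathcal{B}$ as an existence statement for integral manifolds of a fixed exterior differential system over $M$, dispose of the case $k=2$ using hyperbolicity, and treat $k=3$ by a prolongation together with the correspondence between secant submanifolds and hydrodynamic reductions.

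First I would set up the exterior differential system. Over $M$ take the bundle whose fiber over $U$ parametrizes the unordered $k$-tuples of distinct lines of the cone $\mathbf{C}_U$ of Lemma~\ref{hyperbolics}; on its total space there is a tautological Pfaffian system, with an independence condition, whose transverse integral $k$-folds are exactly the tautological lifts of the $k$-secant submanifolds of $M$. Thus $\mathcal{B}$ is $k$-integrable if and only if this system admits an integral $k$-fold through every point. For $k=2$ the system carries at each point two distinguished directions --- the two marked cone lines --- which are \emph{real} precisely because the leading symbol of $F$ has split signature, i.e. because the PDE is hyperbolic. Hence the system is a hyperbolic exterior differential system of Cartan's type: one verifies that it is involutive and, by the method of characteristics, that every integral element extends to an integral surface, just as for hyperbolic PDEs in two independent variables. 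Therefore every $2$-secant plane $E\in Gr_2(\mathbf{T}M)$ is tangent to a $2$-secant surface, and $\mathcal{B}$ is $2$-integrable under no hypothesis on $F$ beyond hyperbolicity.

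For $k=3$ the analogous system need not be involutive, so I would prolong it once; the Cartan test then produces a torsion tensor on $M$ whose identical vanishing is necessary and sufficient for an integral $3$-fold to pass through every point, that is, for $\mathcal{B}$ to be $3$-integrable. To connect this with the PDE I would invoke the correspondence of \cite{Ferapontov2009}: present the equation as the hypersurface $F^{-1}(0)\subset\mathbb{J}^2$ with its contact system; since $F$ involves only the Hessian coordinates, the projection $\mathbb{J}^2\to\mathbb{J}^2/\mathbb{J}^1$ restricts to a submersion onto $M$ that carries the contact structure, and differentiating the relations $\mathrm{d}p_\mu\equiv U_{\mu\nu}\,\mathrm{d}\xi^\nu$ along a reduction of the form (\ref{eqn:hydro})--(\ref{eqn:hydro2}) shows that the Hessian of the reduced solution sweeps out an $n$-dimensional submanifold of $M$ whose tangent plane is everywhere spanned by the $n$ rank-one matrices $\rho^i(\rho^i)^t$, with $\rho^i=(1,\rho^i_2,\rho^i_3)$ the characteristic covector of the Riemann invariant $R^i$. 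The compatibility (\ref{eqn:hydro2}) is exactly what keeps these $n$ directions on the cone, and hyperbolicity of $F$ makes them real and distinct, so an $n$-parameter hydrodynamic reduction produces an $n$-secant submanifold; conversely an $n$-secant submanifold of $M$ reconstructs, by integrating the pulled-back contact system, an $n$-parameter reduction. The ``infinite family parametrized by $n$ functions of one variable'' in the definition of hydrodynamic integrability then matches ``one $n$-secant submanifold tangent to every $n$-secant plane'' by a dimension count, and taking $n=3$ gives: $\mathcal{B}$ is $3$-integrable if and only if the PDE is integrable via $3$-parameter hydrodynamic reductions.

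The step I expect to be the real obstacle is this $k=3$ identification: one must check that the prolongation torsion of the $3$-secant system is literally the obstruction to the relations (\ref{eqn:hydro2}) admitting their full $3$-parameter family of solutions, and keep the Cartan characters of the two problems aligned with the ``$n$ functions of one variable'' normalization. By contrast the $k=2$ step is routine once hyperbolicity of $F$ is used to guarantee reality of the two characteristic systems, and the polynomial nature of the secant conditions --- inherited from the description of the cone in Lemma~\ref{hyperbolics} --- is what makes the whole scheme tractable.
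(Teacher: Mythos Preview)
The paper does not supply its own proof of this statement: Theorem~\ref{thm:F23int} is quoted from \cite{Ferapontov2009} and used as input, not re-derived. The only gloss the paper adds is the remark immediately following the theorem, namely that the version in \cite{Ferapontov2009} carries an extra hypothesis---that on each tri-secant $N^3$ the net $\mathbf{T}N\cap\mathbf{C}$ be a coordinate net---and that Corollary~\ref{holonomic-net} shows this hypothesis to be redundant once $2$-integrability is in hand. So there is no in-paper proof to compare your sketch against.

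That said, your reconstruction is broadly in the right spirit and is compatible with the machinery the paper develops afterward. A few remarks on how it lines up. Your $k=2$ argument (the secant EDS is hyperbolic and hence solvable by characteristics) is essentially the PDE-side shadow of Theorem~\ref{bisecant}; the paper's intrinsic version says $2$-integrability is equivalent to $T=T_8$, so from the present paper's point of view the content of ``$\mathcal{B}$ is always $2$-integrable for Hessian hyperbolic $F$'' is that such $F$ automatically produce torsion in $\mathcal{V}_8$ only. You do not address that, and it is not quite ``routine'': it is a computation about how the embedding $M\hookrightarrow\Lambda^o$ forces the $\mathcal{V}_2$, $\mathcal{V}_6$, $\mathcal{V}_{10}$ torsion components to vanish. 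For $k=3$, your identification of the tangent directions of a reduction with the rank-one matrices $\rho^i(\rho^i)^t$ is exactly the mechanism in \cite{Ferapontov2009}, and your prolongation step matches the paper's Theorem~\ref{thm:3param}. One point you underplay is the holonomic-net issue just mentioned: in Ferapontov's formulation the reverse implication (tri-secant $3$-fold $\Rightarrow$ $3$-parameter reduction) uses that the three cone directions integrate to a coordinate system on $N$, and the paper closes that gap separately in Corollary~\ref{holonomic-net}. Your sketch silently assumes this when you say an $n$-secant submanifold ``reconstructs, by integrating the pulled-back contact system, an $n$-parameter reduction.''
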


Theorem~\ref{thm:F23int} as presented in \cite{Ferapontov2009} apparently
requires that, for each 3-secant $N^3 \subset M^5$, the characteristic net
defined by the intersection $\mathbf{T}N \cap \mathbf{C}$ is a coordinate net.
According to Corollary~\ref{holonomic-net}, this requirement is redundant.

As discussed in \cite{Ferapontov2009}, for PDEs in three independent
variables, the existence of 3-parameter hydrodynamic reductions parametrized
by three functions of one variable implies the existence of $k$-parameter
hydrodynamic reductions parametrized by $k$ functions of one variable for
all $k \geq 3$. 

\begin{cor} 
$F(u_{ij})=0$ is integrable via $k$-parameter hydrodynamic
reductions for all $k\geq2$ if and only if the induced $GL(2)$-structure over
$F^{-1}(0)$ is 2,3-integrable.  
\end{cor}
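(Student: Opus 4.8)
The plan is to reduce the statement entirely to Theorem~\ref{thm:F23int} together with the remark of Ferapontov \emph{et al.} quoted immediately above it, so that the only case requiring a separate argument is $k=2$. Throughout I assume that $F=0$ is hyperbolic at the point in question: this is precisely the hypothesis of Lemma~\ref{hyperbolics} under which ``the induced $GL(2)$-structure over $F^{-1}(0)$'' exists, so both sides of the claimed equivalence implicitly presuppose it.

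First I would dispatch the forward implication. If $F(u_{ij})=0$ is integrable via $k$-parameter hydrodynamic reductions for every $k\geq 2$, then in particular it is integrable via 3-parameter reductions, so Theorem~\ref{thm:F23int} gives that the structure is 3-integrable; and the opening assertion of that same theorem gives, from hyperbolicity alone, that it is 2-integrable. Hence it is 2,3-integrable. (Only the $k=3$ hypothesis is used here.) For the converse, assume the $GL(2)$-structure is 2,3-integrable; in particular it is 3-integrable, so Theorem~\ref{thm:F23int} shows $F=0$ is integrable via 3-parameter hydrodynamic reductions, and the quoted amplification then yields integrability via $k$-parameter reductions for every $k\geq 3$. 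To cover $k=2$ I would argue by restriction: given any 3-parameter reduction with Riemann invariants $R^1,R^2,R^3$ satisfying (\ref{eqn:hydro})--(\ref{eqn:hydro2}), freezing $R^3$ at an arbitrary constant makes (\ref{eqn:hydro}) trivial for $i=3$, leaves it intact for $i=1,2$, and restricts the characteristic speeds $\rho^i_2,\rho^i_3$ to functions of $R^1,R^2$ still satisfying (\ref{eqn:hydro2}) for $i\neq j\in\{1,2\}$; letting the frozen constant and the two surviving free functions vary produces an infinite family of 2-parameter reductions parametrized by (at least) two functions of one variable.

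The step I expect to demand the most care is this last restriction argument: one must check that freezing a Riemann invariant does not destroy the non-degeneracy $\rho^i_2 \neq \rho^j_2$ needed for (\ref{eqn:hydro2}) to be meaningful on the $2$-dimensional slice (this holds on an open subset, still parametrized by two functions of one variable), and that the resulting family really is as large as the definition of hydrodynamic integrability demands rather than a thinner sub-family. An alternative, if one prefers to cite rather than argue, is to invoke the standard fact that every hyperbolic second-order PDE admits the full two-function family of 2-parameter hydrodynamic reductions. Beyond this, nothing new is needed: the corollary is a formal consequence of Theorem~\ref{thm:F23int} and the cited passage from \cite{Ferapontov2009}.
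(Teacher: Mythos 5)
Your argument is correct, and its skeleton is exactly the one the paper intends: the corollary is stated without proof precisely because it is meant to follow at once from Theorem~\ref{thm:F23int} (which gives 2-integrability unconditionally for hyperbolic Hessian PDEs and the equivalence ``3-parameter reductions $\Leftrightarrow$ 3-integrability'') together with the quoted remark that 3-parameter reductions imply $k$-parameter reductions for all $k\geq 3$. The only place you diverge is the $k=2$ case. The paper's implicit route is that 2-parameter reductions come for free: Theorem~\ref{thm:F23int} asserts 2-integrability of the induced structure for any hyperbolic $F(u_{ij})=0$, and Theorem~\ref{bisecant} shows the corresponding bi-secant surfaces (equivalently, the 2-parameter reductions, whose Riemann-invariant level sets they are by Corollary~\ref{holonomic-net}) form a family parametrized by two functions of one variable --- this is the standard fact you mention as an alternative, and it is the cleaner citation since it needs no genericity discussion. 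Your restriction argument (freezing $R^3\equiv c$ in a 3-parameter reduction, checking that (\ref{eqn:hydro}) and (\ref{eqn:hydro2}) survive for $i,j\in\{1,2\}$) is sound as far as it goes, and you correctly flag its weak point: the count that the resulting family is parametrized by two functions of one variable, and the preservation of the non-coincidence of characteristic speeds, are only argued generically rather than established. Since the two-function count for $k=2$ is exactly what the paper's own Theorem~\ref{bisecant} proves, I would replace the restriction argument by that citation; with that substitution your proof coincides with the intended one.
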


\begin{defn}[Hessian hydrodynamic PDEs]
A PDE on $u:\mathbb{R}^3 \to \mathbb{R}$ is \textbf{Hessian hydrodynamic} if and only if it is hyperbolic,
is of the form $F(u_{11}, \ldots, u_{33})=0$, and is integrable by means of
3-parameter hydrodynamic reductions.
\end{defn}
The remainder of this article is a study of Hessian hydrodynamic PDEs via
their associated 2,3-integrable $GL(2)$-structures.

\section{A Preferred Connection}
\label{connection}
In this section, Cartan's method of equivalence is applied to
$GL(2)$-structures.  Cartan's method of equivalence is a standard tool in the
field of exterior differential systems; it is an algorithm for finding all of
the differential invariants of a geometric structure, and the first step is to 
fix a preferred connection among all the possible pseudo-connections of a
geometric structure \cite{Bryant2003, Gardner1989, Ivey2003}.  (The
distinction between a connection and a pseudo-connection is simply whether it
was obtained via such an algorithm.)  The result of the algorithm
is a preferred global coframe for the $GL(2)$-structure that splits into the
semi-basic ``tautological'' form, $\omega$, and the vertical
$\mathfrak{gl}(2)$-valued ``connection'' form, $\theta$.

For a $GL(2)$-structure $\pi:\mathcal{B} \to M$, let $\omega$ denote the
tautological 1-form defined by $\omega_b = b \circ \pi_*:
\mathbf{T}_b\mathcal{B} \to \mathcal{V}_4$.  As always, $\omega$ is a
globally-defined 1-form on $\mathcal{B}$ with linearly independent components, and it is
semi-basic, meaning $\omega|_{\ker \pi_*}=0$.  A pseudo-connection on
$\mathcal{B}$ is a 1-form $\theta$ taking values in the non-trivial
representation of $\mathfrak{gl}(2)$ in $\mathfrak{gl}(\mathcal{V}_4)$
such that $\mathrm{d}\omega = - \theta\wedge\omega + T(\omega\wedge\omega)$
for some torsion $T(b):\mathcal{V}_4 \wedge \mathcal{V}_4 \to \mathcal{V}_4$. 

The goal is to fix a particular $\theta$ that
minimizes the torsion $T$.  To carry out this process, one
needs to understand the $SL(2)$ representation theory on $\mathcal{V}_n$.  The
decomposition of the tensor product ${\mathcal{V}_m \otimes \mathcal{V}_n}$ into
irreducible components is
\begin{equation} \mathcal{V}_m \otimes \mathcal{V}_n = \mathcal{V}_{|m-n|}
\oplus \mathcal{V}_{|m-n|+2} \oplus \cdots \oplus \mathcal{V}_{m+n-2} \oplus
\mathcal{V}_{m+n}. \end{equation} 
The projections onto the various components are given by 
the Clebsch--Gordan \cite{Bryant1991a,Humphreys1972} pairings
$\pair{\cdot,\cdot}_p:\mathcal{V}_m \otimes \mathcal{V}_n \to
\mathcal{V}_{m+n-2p}$, which are provided by the formula
\begin{equation} \pair{u,v}_p = \frac{1}{p!} \sum_{k=0}^p (-1)^k \binom{p}{k}
\frac{\partial^p u}{\partial x^{p-k} \partial y^k}\cdot \frac{\partial^p
v}{\partial x^{k} \partial y^{p-k}}.\end{equation}
This pairing has some important properties.  Notice that $\pair{u,v}_p =
(-1)^p\pair{v,u}_p$ and that the pairing is nontrivial for $0 \leq p \leq
\min\{m,n\}$.  Hence, the tensor decomposition can be further refined in terms
of the symmetric and alternating tensors: 
\begin{equation} 
\begin{split}
\mathcal{V}_n \circ \mathcal{V}_n &= \mathcal{V}_{2n} \oplus
\mathcal{V}_{2n-4} \oplus \cdots \oplus \mathcal{V}_{0 \text{ or } 2},\\
\mathcal{V}_n \wedge \mathcal{V}_n &= \mathcal{V}_{2n-2}
\oplus \mathcal{V}_{2n-6} \oplus \cdots \oplus \mathcal{V}_{2 \text{ or } 0}.
\end{split}
\end{equation}
Notice too that $\pair{\cdot, \cdot}_n: \mathcal{V}_n \otimes \mathcal{V}_n
\to \mathcal{V}_0 = \mathbb{R}$ is a non-degenerate symmetric  or
skew bilinear form.  Hence, for fixed $u \in \mathcal{V}_n$ the map
$\pair{u,\cdot}_n:\mathcal{V}_n \to \mathcal{V}_{0}=\mathbb{R}^1$ provides 
a natural identification, $\mathcal{V}_n = \mathcal{V}_n^*$, and one need never
distinguish between dual spaces when considering representations.

For any derivation over $\mathbb{R}[x,y]$, a Leibniz rule over the pairing
holds.  Because $SL(2)$ is infinitesimally generated by $X$, $Y$, and $H$,
this means that the pairings are $SL(2)$-equivariant.  That is,
$\alpha(\pair{u,v}_p) = \pair{\alpha(u),v}_p + \pair{u,\alpha(v)}_p$ for any
$\alpha \in \mathfrak{sl}(2)$, which implies that $a\cdot \pair{u,v}_p =\pair{a\cdot u ,a
\cdot v }_p$ for any $a \in SL(2)$.  The pairing is not $GL(2)$-equivariant,
but the scaling action is easily computed easily where required. The geometric
objects encountered here are projectively defined, so this variance in scaling
is of little concern. 

Most importantly, the pairing can be generalized to binary-polynomial-valued
differential forms over a manifold.  If $u \in \Gamma(\wedge^r
\mathbf{T}^*\mathcal{B} \otimes \mathcal{V}_m)$ and $v \in \Gamma(\wedge^s
\mathbf{T}^*\mathcal{B} \otimes \mathcal{V}_n)$, then extend the definition by
using the wedge product: \begin{equation} \pair{u,v}_p = \frac{1}{p!}
\sum_{k=0}^p (-1)^k \binom{p}{k} \frac{\partial^p u}{\partial x^{p-k} \partial
y^k}\wedge \frac{\partial^p v}{\partial x^{k} \partial y^{p-k}}.\end{equation}
In this generalization, the symmetry of the pairing is further altered by the
degree of the forms: $\pair{u,v}_p = (-1)^{rs+p}\pair{v,u}_p$.  If $\lambda$
is an $\mathbb{R}$-valued $1$-form acting by the scaling action $\lambda I_m$
on $\mathcal{V}_m$, then $\lambda\wedge u$ may be written as
the trivial pairing $\pair{\lambda,u}_0 = (-1)^r\pair{u,\lambda}_0 \in
\Gamma(\wedge^{r+1} \mathbf{T}^*\mathcal{B} \otimes \mathcal{V}_m)$.

As $SL(2)$ representations, $\mathfrak{gl}(2) =
\mathfrak{sl}(2) \oplus \mathbb{R} = \mathcal{V}_2 \oplus \mathcal{V}_0$.
Thus, a $\mathfrak{gl}(2)$-valued pseudo-connection $\theta$ decomposes as
$(\varphi, \lambda)$ with $\varphi \in \Gamma(\mathbf{T}^*\mathcal{B} \otimes
\mathcal{V}_2)$ and $\lambda \in \Gamma(\mathbf{T}^*\mathcal{B} \otimes
\mathcal{V}_0)$.  The torsion of a generic $\theta$ is 
\begin{equation}\begin{split}
T :\mathcal{B}\to \mathcal{V}_4 \otimes (\mathcal{V}_4^* \wedge \mathcal{V}_4^*)
&= 
\mathcal{V}_4 \otimes (\mathcal{V}_2 \oplus \mathcal{V}_6) \\
&= 
(\mathcal{V}_2 \oplus \mathcal{V}_4 \oplus \mathcal{V}_6) 
\oplus
(\mathcal{V}_2 \oplus \mathcal{V}_4 \oplus \mathcal{V}_6 \oplus
\mathcal{V}_8\oplus \mathcal{V}_{10}),\ \text{so}\\
T &= (T^2_2 + T^2_4 + T^2_6) + (T^6_2 + T^6_4 + T^6_6 + T^6_8 + T^6_{10}).
\end{split}
\label{torsion}
\end{equation}
In this notation, each $T^r_n$ is a distinct irreducible component of $T$. The
lower index $n$ indicates the weight of the representation in which $T^r_n$
takes values, and the upper index $r$ indicates the summand from which it was
obtained.

Thus, Cartan's first structure equation for a $GL(2)$-structure may be written in
either vector form or polynomial form:
\begin{equation}
\begin{split}
\mathrm{d}\omega &= 
-\theta\wedge\omega + T(\omega\wedge\omega)\\
&= 
-\pair{\varphi,\omega}_1 - \pair{\lambda,\omega}_0 + 
\pair{ T^2_{ 2}, \pair{\omega,\omega}_3}_0 +
\pair{ T^2_{ 4}, \pair{\omega,\omega}_3}_1 +
\pair{ T^2_{ 6}, \pair{\omega,\omega}_3}_2 \\
&\phantom{=} +
\pair{ T^6_{ 2}, \pair{\omega,\omega}_1}_2 +
\pair{ T^6_{ 4}, \pair{\omega,\omega}_1}_3 +
\pair{ T^6_{ 6}, \pair{\omega,\omega}_1}_4 \\
&\phantom{=} +
\pair{ T^6_{ 8}, \pair{\omega,\omega}_1}_5 +
\pair{ T^6_{10}, \pair{\omega,\omega}_1}_6.
\end{split}
\label{structure}
\end{equation}
Explicitly, the connection term is 
\begin{equation}
\theta\wedge\omega =
\begin{pmatrix}
-8\varphi_{0}+\lambda & 8\varphi_{-2} &0 & 0 & 0 \\ -2\varphi_{2} &
-4\varphi_{0}+\lambda & 6\varphi_{-2} & 0 & 0 \\ 0 & -4\varphi_{2} &  \lambda &
4\varphi_{-2} & 0 \\ 0 & 0 &-6\varphi_{2} &  4\varphi_{0}+\lambda &
2\varphi_{-2} \\ 0 & 0 & 0 &-8\varphi_{2} &  8\varphi_{0}+\lambda \\
\end{pmatrix} \wedge \begin{pmatrix} \omega^{-4}\\ \omega^{-2}\\ \omega^{0}\\
\omega^{2}\\ \omega^{4} \end{pmatrix}, 
\label{structurematrix}
\end{equation}
so the matrix representation of $\theta$ is $(2\varphi_{-2}{X} -
2\varphi_{0}{H}+2\varphi_{2}{Y} + \lambda{I}_5)$, which
takes values in $\mathfrak{gl}(2) \subset \mathfrak{gl}(\mathcal{V}_4)$.

\begin{thm}
A generic $GL(2)$-structure ${\pi:\mathcal{B}\to M}$ admits a two-dimensional
family of connections such that the essential torsion $T$ decomposes
irreducibly as 
\begin{equation}T= T_2 + T_6 + T_8 + T_{10} \in \mathcal{V}_2 \oplus
\mathcal{V}_6 \oplus \mathcal{V}_8 \oplus \mathcal{V}_{10} \subset
\mathcal{V}_4 \otimes (\mathcal{V}_4^* \wedge \mathcal{V}_4^*).\end{equation} 
The two-dimensional family is parametrized by the possible equivariant
inclusions of 
$\mathcal{V}_2 \oplus \mathcal{V}_6 \oplus \mathcal{V}_8 \oplus
\mathcal{V}_{10}$ into $\mathcal{V}_4 \otimes (\mathcal{V}_4^* \wedge
\mathcal{V}_4^*)$, and once such an inclusion is chosen, the connection is
unique.
\label{unique-connection}
\end{thm}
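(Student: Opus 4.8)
The plan is to read Theorem~\ref{unique-connection} as a computation in the Spencer cohomology of the pair $(\mathfrak{gl}(2), \mathcal{V}_4)$, i.e.\ as the standard torsion-absorption step of Cartan's method of equivalence. First I would recall that any two pseudo-connections on $\mathcal{B}$ differ by a semibasic $\mathfrak{gl}(2)$-valued $1$-form, that is, by $p(\omega)$ for some $p \in \mathrm{Hom}(\mathcal{V}_4, \mathfrak{gl}(2)) = \mathcal{V}_4^* \otimes (\mathcal{V}_2 \oplus \mathcal{V}_0)$, and that replacing $\theta$ by $\theta + p(\omega)$ replaces $T(\omega\wedge\omega)$ by $T(\omega\wedge\omega) + p(\omega)\wedge\omega$. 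Thus the change in torsion is the image of $p$ under the Spencer coboundary $\delta \colon \mathcal{V}_4^* \otimes \mathfrak{gl}(2) \to \mathcal{V}_4 \otimes (\mathcal{V}_4^* \wedge \mathcal{V}_4^*)$, $(\delta p)(v\wedge w) = p(v)\,w - p(w)\,v$. So the essential torsion is the class of $T$ in $\mathrm{coker}\,\delta$, a connection minimizing the torsion amounts to a choice of $SL(2)$-equivariant complement to $\mathrm{im}\,\delta$, and once such a complement is fixed the correcting term $p$ is unique precisely when $\ker\delta = 0$. Hence the theorem reduces to three facts: $\ker \delta = 0$; $\mathrm{coker}\,\delta \cong \mathcal{V}_2 \oplus \mathcal{V}_6 \oplus \mathcal{V}_8 \oplus \mathcal{V}_{10}$ as an $SL(2)$-module; and the space of equivariant complements is two-dimensional.

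For the representation theory I would decompose both sides into $SL(2)$-irreducibles. The target is already given in Equation~(\ref{torsion}): $\mathcal{V}_4 \otimes (\mathcal{V}_4^* \wedge \mathcal{V}_4^*) = \mathcal{V}_4\otimes(\mathcal{V}_2 \oplus \mathcal{V}_6)$, with isotypic multiplicities $2,2,2,1,1$ for $\mathcal{V}_2, \mathcal{V}_4, \mathcal{V}_6, \mathcal{V}_8, \mathcal{V}_{10}$ (total dimension $50$). By Clebsch--Gordan and self-duality the source is $(\mathcal{V}_2 \oplus \mathcal{V}_4 \oplus \mathcal{V}_6) \oplus \mathcal{V}_4$, with multiplicities $1,2,1$ for $\mathcal{V}_2,\mathcal{V}_4,\mathcal{V}_6$ (total dimension $20$). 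Since $\delta$ is $SL(2)$-equivariant, Schur's lemma forces it to be block-diagonal with respect to the isotypic decomposition, so it is determined by a linear map of multiplicity spaces in each weight: $\mathbb{R}^1 \to \mathbb{R}^2$ in weight $2$, $\mathbb{R}^2 \to \mathbb{R}^2$ in weight $4$, $\mathbb{R}^1 \to \mathbb{R}^2$ in weight $6$, and $0 \to \mathbb{R}^1$ in weights $8$ and $10$. This already shows $\mathcal{V}_8$ and $\mathcal{V}_{10}$ lie in the cokernel no matter what, and reduces everything to three small matrices.

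The one genuinely computational step is to evaluate these three multiplicity-space maps, most easily by substituting $\varphi_i$ by $p_i(\omega)$ and $\lambda$ by $\ell(\omega)$ with $p_i, \ell \in \mathcal{V}_4^*$ into the explicit expression for $\theta\wedge\omega$ in Equation~(\ref{structurematrix}) and expanding the wedge products against a basis of $\mathcal{V}_4$. The claim to verify is that the weight-$2$ and weight-$6$ maps $\mathbb{R}^1 \to \mathbb{R}^2$ are injective and that the weight-$4$ map $\mathbb{R}^2 \to \mathbb{R}^2$ is invertible; together these give $\ker\delta = 0$ (so $\mathfrak{gl}(2)$ has no first prolongation in this representation and each correcting $p$ is unique) and $\mathrm{rank}\,\delta = 20$, whence $\dim\mathrm{coker}\,\delta = 30$ and, from the surviving multiplicities, $\mathrm{coker}\,\delta \cong \mathcal{V}_2 \oplus \mathcal{V}_6 \oplus \mathcal{V}_8 \oplus \mathcal{V}_{10}$. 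I expect the verification of the weight-$4$ block --- that both copies of $\mathcal{V}_4$ in the torsion are absorbable --- to be the only real obstacle; it is exactly the sort of linear-algebra-in-a-fixed-basis check the author defers to computer algebra, and it can also be organized by hand by using the operators $X$ and $Y$ to reduce each block to its action on a highest-weight vector.

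Finally, to pin down the family of minimizing connections I would describe the equivariant complements of $\mathrm{im}\,\delta$. In weights $8$ and $10$ there is nothing to choose. In weight $6$ the image is a line inside the two-dimensional isotypic subspace $\mathcal{V}_6 \oplus \mathcal{V}_6$ of the target, and an equivariant complement is any transverse line; these form a one-parameter family (an affine chart of $\mathbb{P}^1$). Likewise weight $2$ contributes one more parameter, and weight $4$ contributes none since $\delta$ is already surjective there. Thus the minimizing connections are parametrized by a two-dimensional family, naturally identified with the equivariant inclusions of $\mathcal{V}_2 \oplus \mathcal{V}_6 \oplus \mathcal{V}_8 \oplus \mathcal{V}_{10}$ into $\mathcal{V}_4 \otimes (\mathcal{V}_4^* \wedge \mathcal{V}_4^*)$ whose images are complementary to $\mathrm{im}\,\delta$; and since $\ker\delta = 0$, fixing one such inclusion determines $p$, hence $\theta$, uniquely. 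This is exactly the assertion of Theorem~\ref{unique-connection}.
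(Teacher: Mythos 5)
Your proposal follows essentially the same route as the paper's proof: both reduce the torsion-absorption problem, via equivariance and Schur's lemma, to the multiplicity-space maps in weights $2$, $4$, and $6$, and the paper's explicitly computed constants ($a_2=\tfrac3{10}$, $b_2=\tfrac15$; $a_6=-\tfrac15$, $b_6=-\tfrac1{20}$; and the invertible weight-$4$ block coming from $a_4=\tfrac12$, $b_4=0$, $c_4=-\tfrac1{40}$, $d_4=-\tfrac1{160}$) verify exactly the injectivity and invertibility checks you defer. Your description of the two-parameter family --- one choice of absorbed linear combination in weight $2$ and one in weight $6$, with nothing to choose in weights $4$, $8$, $10$ --- matches the paper's conclusion precisely.
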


\begin{proof}
Changes of pseudo-connection are of the form $\hat{\varphi}=\varphi+P(\omega)$
and $\hat{\lambda}=\lambda+Q(\omega)$ where $P \in \mathcal{V}_2 \otimes
\mathcal{V}_4^* = \mathcal{V}_2 \oplus \mathcal{V}_4 \oplus \mathcal{V}_6$ and
$Q \in \mathcal{V}_0 \otimes \mathcal{V}_4^* = \mathcal{V}_4$.  Preferred
connections are obtained by analyzing the skewing map $\delta$ that describes
how changes of pseudo-connection (equivalently, changes of horizontal section
of the frame bundle) affect the torsion \cite{Bryant2003}:  
\begin{equation}
\begin{diagram}
\dgARROWLENGTH=0.5\dgARROWLENGTH
\node{0 \to\mathfrak{gl}(2)^{(1)}} \arrow{e} 
\node{(\mathcal{V}_2 \oplus \mathcal{V}_0) \otimes \mathcal{V}_4^*} 
\arrow{e,t}{\delta} \node{\mathcal{V}_4 \otimes(\wedge^2 \mathcal{V}_4^*)} 
\arrow{e} \node{H^{0,2}(\mathfrak{gl}(2)) \to 0.}\\
\node[3]{\mathcal{B}}\arrow{nw,b}{P,Q}\arrow{n,l}{T}\arrow{ne,r}{[T]}
\end{diagram}
\label{sequence}
\end{equation}
For current purposes, $\mathfrak{gl}(2)^{(1)}$ and $H^{0,2}(\mathfrak{gl}(2))$
are defined by the exactness of the sequence \cite{Gardner1989}.   They depend
on the representation of the group $GL(2)$, in this case the irreducible
representation $\mathcal{V}_4$. To find the
space of essential torsion, $H^{0,2}(\mathfrak{gl}(2))$, one must compute
$\delta P$ and $\delta Q$.

Fix $P \in \mathcal{V}_2 \oplus \mathcal{V}_4 \oplus \mathcal{V}_6$, where
$\mathcal{V}_2 \owns P(\omega) = \pair{P_2, \omega}_2 + \pair{P_4,\omega}_3 +
\pair{P_6,\omega}_4$. Let $\delta P \in \mathcal{V}_4 \otimes
\wedge^2(\mathcal{V}_4)$ have components $\delta P = \delta P^2_2 + \delta
P^2_4 + \delta P^2_6 + \delta P^6_2 + \delta P^6_4 + \delta P^6_6 + \delta
P^6_8 + \delta P^6_{10}$, similar to the decomposition of $T$.  Since these
are irreducible components and the action of $\delta$ must be
$SL(2)$-equivariant, Schur's lemma implies $\delta$ must preserve the weights
of the representations.
In particular there must exist constants $a_2$, $a_4$, $a_6$, $b_2$, $b_4$,
and $b_6$ such that $\delta P^2_2 = a_2 P_2$, $\delta P^6_2 = b_2 P_2$, and so on.  Thus,
\begin{equation}
\begin{split}
0 &= \pair{ P(\omega),\omega}_1 - \delta P(\omega,\omega)\\
&=
 \pair{ \pair{P_2,\omega}_2,\omega}_1 
+\pair{ \pair{P_4,\omega}_3,\omega}_1 
+\pair{ \pair{P_6,\omega}_4,\omega}_1 \\
&\ - 
\pair{a_2 P_2,\pair{\omega,\omega}_3}_0 - 
\pair{a_4 P_4,\pair{\omega,\omega}_3}_1 - 
\pair{a_6 P_6,\pair{\omega,\omega}_3}_2 \\
&\ - 
\pair{b_2 P_2,\pair{\omega,\omega}_1}_2 - 
\pair{b_4 P_4,\pair{\omega,\omega}_1}_3 - 
\pair{b_6 P_6,\pair{\omega,\omega}_1}_4.
\end{split}
\label{skewP}
\end{equation}
Carrying out this computation shows that 
\begin{equation}
a_2=\frac3{10},\  b_2=\frac15,\ 
a_4=\frac12,\  b_4=0,\ 
a_6=-\frac15,\ b_6=-\frac1{20}.
\label{image of skew P}
\end{equation}

Similarly, fix $Q \in \mathcal{V}_4$, where
$\mathcal{V}_0 \owns Q(\omega) = \pair{Q_4, \omega}_4$.  Let $\delta Q \in
\mathcal{V}_4 \otimes \wedge^2(\mathcal{V}_4)$ have components $\delta Q =
\delta Q^2_2 + \delta Q^2_4 + \delta Q^2_6 + \delta Q^6_2 + \delta Q^6_4 +
\delta Q^6_6 + \delta Q^6_8 + \delta Q^6_{10}$, but again the image must have
the same weight as the domain.
In particular there must exist constants $c_4$, and $d_4$ such that
\begin{equation}
\begin{split}
0 &= \pair{ Q(\omega),\omega}_0 - \delta Q(\omega,\omega) \\
&=
\pair{ \pair{Q_4,\omega}_4,\omega}_0 
- \pair{c_4 Q_4,\pair{\omega,\omega}_3}_1 
- \pair{d_4 Q_4,\pair{\omega,\omega}_1}_3.
\end{split}
\label{skewQ}
\end{equation}
Carrying out this computation shows that 
\begin{equation}
c_4=-\frac1{40},\ d_4=-\frac1{160}.
\label{image of skew Q}
\end{equation}

Fix a generic pseudo-connection $(\varphi,\lambda)$ with torsion
$T$.  Consider another pseudo-connection $\hat{\varphi} = \varphi+P(\omega)$,
$\hat{\lambda}=\lambda+Q(\lambda)$ with torsion $\hat{T}$. Then
\begin{equation}\begin{split}
\hat{T}(\omega,\omega)
&=
\mathrm{d}\omega + \pair{\hat\varphi,\omega}_1 + \pair{\hat\lambda,\omega}_0\\
&= 
\mathrm{d}\omega + \pair{\varphi,\omega}_1 + \pair{P(\omega),\omega}_1 +
\pair{Q(\omega),\omega}_0\\
&=
(T+\delta P+\delta Q)(\omega,\omega).
\end{split}\end{equation}
Using Equation~(\ref{image of skew P}) and Equation~(\ref{image of skew Q}), the
absorption of torsion is dictated by the solvability of the following
equations in terms of $P$ and $Q$ for fixed $T$ and $\hat{T}$:
\begin{equation}
\begin{split}
\hat{T}^2_{ 2} &= T^2_{ 2} + \frac{3}{10}P_{ 2},\\
\hat{T}^2_{ 4} &= T^2_{ 4} + \frac{1}{2}P_{ 4} - \frac{1}{40} Q_{ 4},\\
\hat{T}^2_{ 6} &= T^2_{ 6} - \frac{1}{5}P_{ 6},\\
\hat{T}^6_{ 2} &= T^6_{ 2} + \frac{1}{5}P_{ 2},\\
\hat{T}^6_{ 4} &= T^6_{ 4} - \frac{1}{160} Q_{ 4},\\
\hat{T}^6_{ 6} &= T^6_{ 6} - \frac{1}{20}P_{ 6},\\
\hat{T}^6_{ 8} &= T^6_{ 8},\\
\hat{T}^6_{10} &= T^6_{10}.
\end{split}
\end{equation}
Generally, one may choose $P_2$ to force exactly one linear combination of
$\hat{T}^6_2$ and $\hat{T}^2_2$ to vanish.  Similarly, one may choose
$P_6$ to force exactly one linear combination of $\hat{T}^6_6$ and $\hat{T}^2_6$ to vanish.
Unique $Q_4$ and $P_4$ eliminate $\hat{T}^6_4$ and
$\hat{T}^2_4$.  All other components of $\hat{T}$ are fixed. 
Thus, 
$\mathfrak{gl}(2)^{(1)}=\mathfrak{sl}(2)^{(1)}=0$ and
$H^{0,2}(\mathfrak{sl}(2)) = \mathcal{V}_2 \oplus
\mathcal{V}_4 \oplus \mathcal{V}_6 \oplus \mathcal{V}_8 \oplus
\mathcal{V}_{10}$, while  $H^{0,2}(\mathfrak{gl}(2)) = \mathcal{V}_2 \oplus
\mathcal{V}_6 \oplus \mathcal{V}_8 \oplus
\mathcal{V}_{10}$.
\end{proof}

\begin{cor}[Preferred GL(2) Connection]
A $GL(2)$-structure $\mathcal{B} \to M$ admits a unique connection $\varphi,\lambda$ such
that $\mathcal{B}$ has first structure equation
\begin{equation}
\begin{split}
\mathrm{d}\omega &= - \pair{\varphi,\omega}_1 - \pair{\lambda,\omega}_0  \\
&\phantom{=}+
\pair{T_2,\pair{\omega,\omega}_1}_2 + 
\pair{T_6,\pair{\omega,\omega}_1}_4\\
&\phantom{=}+
\pair{T_8,\pair{\omega,\omega}_1}_5 +
\pair{T_{10},\pair{\omega,\omega}_1}_6.
\end{split}
\end{equation}
\label{coframe}
\end{cor}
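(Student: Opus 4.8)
The plan is to derive Corollary~\ref{coframe} as an immediate specialization of Theorem~\ref{unique-connection}. Theorem~\ref{unique-connection} establishes that a generic $GL(2)$-structure admits a two-parameter family of connections whose essential torsion lives in $\mathcal{V}_2 \oplus \mathcal{V}_6 \oplus \mathcal{V}_8 \oplus \mathcal{V}_{10}$, the two parameters being the choice of equivariant inclusion of this module into $\mathcal{V}_4 \otimes (\mathcal{V}_4^* \wedge \mathcal{V}_4^*)$. To get a \emph{unique} connection and a canonical form of the structure equation, I would simply fix one such inclusion once and for all --- the natural one coming from the Clebsch--Gordan pairings $\pair{\cdot,\pair{\omega,\omega}_1}_p$ for $p=2,4,5,6$, which pick out the $\mathcal{V}_2$, $\mathcal{V}_6$, $\mathcal{V}_8$, $\mathcal{V}_{10}$ summands respectively inside $\mathcal{V}_4 \otimes \pair{\omega,\omega}_1$. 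With this inclusion chosen, Theorem~\ref{unique-connection} says the connection $(\varphi,\lambda)$ is unique, and the displayed structure equation is just Equation~(\ref{structure}) with the vanishing components ($T^2_2$, $T^2_4$, $T^2_6$, $T^6_4$, $T^6_6$, and also the $\mathcal{V}_2$-from-$\mathcal{V}_6$ piece absorbed into $T_2$, etc.) removed.

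Concretely, the steps are: (1) Recall from the proof of Theorem~\ref{unique-connection} the explicit absorption equations, which show that $P_2$ can be chosen to kill the $\pair{\omega,\omega}_3$-valued $\mathcal{V}_2$-component (forcing the surviving $\mathcal{V}_2$-torsion to appear only via the $\pair{\omega,\omega}_1$ pairing), $P_6$ likewise forces the $\mathcal{V}_6$-torsion to appear only via $\pair{\omega,\omega}_1$, and the unique $P_4, Q_4$ eliminate both $\mathcal{V}_4$-components entirely. (2) Observe that $T^6_8$ and $T^6_{10}$ are unaffected by any change of pseudo-connection, so they are genuine invariants and appear via $\pair{\omega,\omega}_1$. (3) Relabel the four surviving irreducible torsion components as $T_2, T_6, T_8, T_{10}$ (dropping the now-redundant superscript, since for each weight only one summand survives) and write out Equation~(\ref{structure}) with only these terms, using the pairings $\pair{T_2, \pair{\omega,\omega}_1}_2$, $\pair{T_6, \pair{\omega,\omega}_1}_4$, $\pair{T_8, \pair{\omega,\omega}_1}_5$, $\pair{T_{10}, \pair{\omega,\omega}_1}_6$. (4) Note that the connection term $-\theta\wedge\omega$ is exactly $-\pair{\varphi,\omega}_1 - \pair{\lambda,\omega}_0$, as already recorded in Equation~(\ref{structure}). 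This yields the stated first structure equation, and uniqueness is inherited verbatim from Theorem~\ref{unique-connection}.

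The only genuine content beyond Theorem~\ref{unique-connection} is the bookkeeping that the particular normalization chosen --- zeroing out the $\pair{\omega,\omega}_3$-type $\mathcal{V}_2$ and $\mathcal{V}_6$ pieces and both $\mathcal{V}_4$ pieces --- is consistent and exhausts the freedom, so that the resulting $(\varphi,\lambda)$ really is well-defined globally on $\mathcal{B}$. This follows because the absorption equations are affine with invertible linear parts in the relevant unknowns ($P_2$ appears with nonzero coefficients $\tfrac{3}{10}$ and $\tfrac15$; $P_6$ with $-\tfrac15$ and $-\tfrac1{20}$; the pair $(P_4,Q_4)$ against $(\hat T^2_4, \hat T^6_4)$ has matrix $\left(\begin{smallmatrix} \tfrac12 & -\tfrac1{40} \\ 0 & -\tfrac1{160}\end{smallmatrix}\right)$, which is nonsingular), so the normalization conditions determine $P$ and $Q$ uniquely and hence pin down a global connection. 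I expect no real obstacle here; the ``hard part'' was already done in Theorem~\ref{unique-connection}, and this corollary is essentially a restatement in canonical coordinates. The one point requiring a line of care is checking that the four chosen pairings $\pair{\cdot, \pair{\omega,\omega}_1}_p$ are linearly independent as maps into $\mathcal{V}_4 \otimes (\mathcal{V}_4^* \wedge \mathcal{V}_4^*)$ and together span a complement to $\mathrm{image}(\delta)$, which is immediate from $SL(2)$-equivariance and Schur's lemma since they have distinct weights.
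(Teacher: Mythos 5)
Your proposal is correct and follows essentially the same route as the paper: Corollary~\ref{coframe} is obtained from Theorem~\ref{unique-connection} by using $P_2$ and $P_6$ to absorb $T^2_2$ and $T^2_6$ (with $P_4,Q_4$ uniquely killing both $\mathcal{V}_4$ pieces), leaving the essential torsion $T^6_2+T^6_6+T^6_8+T^6_{10}$ expressed through the $\pair{\cdot,\pair{\omega,\omega}_1}_p$ pairings. The paper's own proof is just the one-line version of this bookkeeping.
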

\begin{proof}
Of the possible connections, choose the one that absorbs $T^2_{2}$ and
$T^2_{6}$.  The remaining essential torsion is $T=T^6_2 + T^6_6 + T^6_8 +
T^6_{10}$.
\end{proof}

Henceforth, all references to $\theta$, $\varphi$, $\lambda$, and $T$ assume
the connection in Corollary~\ref{coframe}. The specification of this
connection over the others is arbitrary, but it does not affect any subsequent
theorems in this article, since the $\mathcal{V}_2$ and $\mathcal{V}_6$
components of essential torsion turn out to be unimportant in the study of integrable
PDEs.

\section{2-Integrability}
\label{2int}

\begin{thm} 
If a $GL(2)$-structure ${\pi:\mathcal{B}\to M}$ is 2-integrable, then $T=T_8$
(that is, $T_2=T_6=T_{10}=0$), and the bi-secant surfaces in $M$ are parametrized
by two functions of one variable.  Conversely, if $\mathcal{B}$ is a smooth
$GL(2)$-structure with $T=T_8$, then $\mathcal{B}$ is 2-integrable.
\label{bisecant} 
\end{thm}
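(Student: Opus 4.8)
The plan is to set up the exterior differential system (EDS) for bi-secant surfaces on $\mathcal{B}$ and analyze its torsion and tableau via Cartan--Kähler theory. A $2$-secant subspace $E \subset \mathbf{T}_pM$ is spanned by two lines in the cone $\mathbf{C}_p$; in a frame $b \in \mathcal{B}_p$ these are $(g_1 x + h_1 y)^4$ and $(g_2 x + h_2 y)^4$ for two distinct points $[g_1 : h_1], [g_2:h_2] \in \mathbb{RP}^1$. So the relevant flag manifold over $\mathcal{B}$ parametrizing a frame together with a $2$-secant plane is coordinatized by these two projective parameters (plus the fiber of $\mathcal{B}$), and on it one builds the ideal $\mathcal{I}$ generated by the $1$-forms $\omega^\alpha$ that must vanish on an integral manifold projecting to a $2$-secant surface — namely the three components of $\omega$ complementary to the plane $\mathrm{span}\{(g_ix+h_iy)^4\}$. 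First I would normalize, using the $GL(2)$ action, the two lines to the standard pair (say $x^4$ and $y^4$), reducing to the subbundle where the plane is $\mathrm{span}\{\omega^{-4}\text{-axis}, \omega^{4}\text{-axis}\}$ after a suitable change of basis in $\mathcal{V}_4$; the residual freedom is the subgroup of $GL(2)$ fixing $\{0,\infty\} \subset \mathbb{RP}^1$, i.e. the diagonal torus times the swap.

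Next I would compute $\mathrm{d}$ of the three semi-basic forms cutting out the plane, reducing modulo the ideal $\mathcal{I}$ itself and modulo the remaining two semi-basic forms, using the structure equation of Corollary~\ref{coframe}. The key point is that the obstruction to an integral element through each point — the ``torsion'' of the EDS in the Cartan sense — is built from the components of $T$ evaluated on the pair of lines defining $E$. Expanding $\pair{T_n,\pair{\omega,\omega}_1}_{\bullet}$ in the normalized frame and extracting the coefficients of $\mathrm{d}(\text{plane})$ that are forced to vanish, one finds that the vanishing of this torsion for \emph{every} choice of the two lines (i.e. for every $2$-secant plane, as required by $2$-integrability) is equivalent to the vanishing of $T_2$, $T_6$, and $T_{10}$, leaving only $T_8$ unobstructed. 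This is the computational heart of the theorem and the part I expect to be the main obstacle: one must show that the $\mathcal{V}_2$, $\mathcal{V}_6$, $\mathcal{V}_{10}$ pieces each contribute a genuinely nonzero obstruction polynomial in the line parameters (so they must vanish identically), while the $\mathcal{V}_8$ piece contributes none — a weight-by-weight Clebsch--Gordan bookkeeping that Schur's lemma organizes but does not eliminate. I would lean on the representation-theoretic structure: the obstruction map is $GL(2)$-equivariant from $\mathcal{V}_2\oplus\mathcal{V}_6\oplus\mathcal{V}_8\oplus\mathcal{V}_{10}$ to the space of sections of a bundle over the space of $2$-secant planes, and one checks its kernel is exactly $\mathcal{V}_8$.

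For the converse, assuming $T=T_8$, the torsion of the EDS vanishes identically, so every $2$-secant plane is an integral element; it remains to run Cartan's test. I would compute the tableau of the prolonged system and its Cartan characters, exhibiting that the system is in involution with last nonzero character $s_1 = 2$ (and higher $s_i = 0$), which is precisely the hyperbolic-type tableau alluded to in the paper's note on terminology. Cartan--Kähler then gives, through each $2$-secant plane $E = \mathbf{T}_pN$, a $2$-secant integral surface, establishing $2$-integrability; and the character count $s_1 = 2$, with the general solution depending on $s_1 = 2$ functions of one variable, yields the stated parametrization of the bi-secant surfaces by two functions of one variable. (Real-analyticity is not an issue here because the hyperbolic tableau admits a direct integration by characteristics, or one invokes the smooth version of the relevant Cartan--Kähler-type theorem for hyperbolic systems.) The only subtlety in the converse is checking involutivity honestly rather than merely counting dimensions — I would verify that the prolongation introduces no new torsion, which again follows from $T = T_8$ together with the structure equation and the Bianchi-type identity $\mathrm{d}(\mathrm{d}\omega) = 0$.
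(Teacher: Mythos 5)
Your plan follows essentially the same route as the paper: normalize the bi-secant plane to $\mathrm{span}\{x^4,y^4\}$, form the linear Pfaffian system generated by $\omega^{-2},\omega^{0},\omega^{2}$ with independence condition $\omega^{-4}\wedge\omega^{4}\neq 0$, identify the unabsorbable torsion (which involves only components of $T_2$, $T_6$, $T_{10}$, never $T_8$), use transitivity of $GL(2)$ on bi-secant planes plus equivariance to force those irreducible pieces to vanish, and then run Cartan's test ($s_1=2$, $s_2=0$) together with the hyperbolic-tableau argument (Yang's theorem) to get smooth bi-secant surfaces depending on two functions of one variable. One minor remark: no prolongation is actually needed here, since after absorption the tableau is already involutive, but this does not affect the correctness of your argument.
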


\begin{proof}
To prove the theorem, one must find the conditions on $\mathcal{B}$ that allow
an arbitrary bi-secant plane $E \in Gr_2(\mathbf{T}_pM)$ to be extended to a
bi-secant surface $N \subset M$. 

In a neighborhood $M'$ of $p$, Fix $u:M' \to \mathcal{B}(M')$, a section of
$\pi:\mathcal{B}\to M$; that is, fix $u$, a $GL(2)$ coframe on $M'$.  Let
$b=u(p)$.  Since $E$ is bi-secant, $b(E) \subset \mathcal{V}_4$ is
spanned by $(g_1x-h_1y)^4$ and $(g_2x-h_2y)^4$ with $g_1h_2  \neq g_2h_1$.
Through a $GL(2)$ frame adaptation redefining $u$, one may assume that $b(E) =
\mathop{\mathrm{span}} \{x^4, y^4\} \subset \mathcal{V}_4$.  Let $\tilde{E} = u_*(E) \in
Gr_2(\mathbf{T}_{b}\mathcal{B})$.  Then
$\omega^{-4}\wedge\omega^{4}|_{\tilde{E}} \neq 0$.

Consider the linear Pfaffian exterior differential system $\mathcal{I}$
differentially generated by the $1$-forms $\{ \omega^{-2}, \omega^0, \omega^{2}\}$ with
independence condition the $\omega^{-4}\wedge\omega^4\neq0$.  It suffices to
prove the existence of a surface $\tilde{N} \subset \mathcal{B}$ that is
integral to $\mathcal{I}$, because
$u(\pi_*(\mathbf{T}\tilde{N}))=\omega_u(\mathbf{T}\tilde{N}) =
\mathop{\mathrm{span}}\{x^4,  y^4\}$ implies that the surface
$N=\pi(\tilde{N})\subset M'$ is bi-secant.  In fact, by adapting the moving
coframe $u$ appropriately, every bi-secant surface through $E$ must arise
this way.

The generating 2-forms of $\mathcal{I}$ are 
\begin{equation}
\mathrm{d}\begin{pmatrix}
\omega^{-2} \\ \omega^{0}\\ \omega^{2}
\end{pmatrix} \equiv
\begin{pmatrix}
-2\varphi_2  & 0 \\
0           & 0 \\
0           & 2\varphi_{-2}  \\
\end{pmatrix}
\wedge
\begin{pmatrix}
\omega^{-4}\\
\omega^{ 4} 
\end{pmatrix} + 
\begin{pmatrix}
\tau^{-2}\\\tau^{0}\\\tau^{2}\end{pmatrix}\omega^{-4}\wedge\omega^4
\label{involutive structure}
\end{equation}
modulo $\omega^{-2}, \omega^{0}, \omega^{2}$, where
\begin{equation}
\begin{split}
\tau^{-2} &= 
48\ T_{2,-2} + 8640\ T_{6,-2} + 322560\ T_{8,-2} - 4838400\ T_{10,-2}\\
\tau^{0} &=
-96\ T_{2,0} + 23040\ T_{6,0} - 4838400\ T_{10,0} \\
\tau^{2} &=
48\ T_{2,2} + 8640\ T_{6,2} - 322560\ T_{8,2} - 4838400\ T_{10,2}
\end{split}
\label{involutive torsion}
\end{equation}

Because of the independence condition, integral elements exist
only when the torsion can be absorbed.  The torsion component $\tau^{0}$ can
never be absorbed, so integral manifolds exist only when $\tau^{0} = 0$.  The
condition of $2$-integrability means that every $2$-secant plane is tangent to
a $2$-secant surface, but the $GL(2)$ action is transitive on $2$-secant planes in
$\mathbf{T}_pM$; therefore, 2-integrability implies that $\tau^{0}=0$ for every
element in the $GL(2)$ orbit of $T$.  Under the $GL(2)$ action, the coordinates
of the irreducible representations of $T$ will change, so each
irreducible representation that appears in $\tau^{0}$ must vanish identically.
Hence, $2$-integrability of $M$ by integral manifolds implies
\begin{equation}T_{10}=T_{6}=T_{2}=0.\end{equation}
The remaining torsion components, $\tau^{-2}$ and $\tau^{2}$, are
absorbed by setting $\pi_1 =  2\varphi_2 - 322560 T_{8,-2}\ \omega^4$ and $\pi_2
= -2\varphi_{-2} -322560 T_{8,2}\ \omega^{-4}$, so
\begin{equation}
\mathrm{d}\begin{pmatrix}
\omega^{-2} \\ \omega^{0} \\ \omega^{2}
\end{pmatrix} \equiv
\begin{pmatrix}
\pi_1 & 0 \\
0     & 0 \\
0     & \pi_2  \\
\end{pmatrix}
\wedge
\begin{pmatrix}
\omega^{-4}\\
\omega^{ 4} 
\end{pmatrix} 
, \mod \omega^{-2}, \omega^{0}, \omega^{2}.
\label{absorbed_involutive_structure}
\end{equation}
This proves the torsion condition in the theorem.

Conversely, to establish the existence and parametrization of bi-secant surfaces,
one can apply Cartan's test for involutivity to the tableau in
Equation~(\ref{absorbed_involutive_structure}) \cite{Bryant1991, Ivey2003}.
For a generic flag of $\mathbf{T}_pN$ obtained from generic linear
combinations of $\omega^{-4}$ and $\omega^{ 4}$, the tableau has Cartan
characters $s_1=2$ and $s_2=0$.  Let $\tilde{\nu}:\tilde{N} \to \mathcal{B}$
denote the embedding of the integral surface.  The space of integral elements for the EDS
$(\mathcal{I},\omega^{-4}\wedge\omega^{4})$ is 2-dimensional, as parametrized
by the coefficients $p_{1,4}$ and $p_{3,-4}$ that appear in the pulled-back
forms $\tilde{\nu}^*(\pi_1) = p_{1,4}\tilde{\nu}^*(\omega^4)$ and
$\tilde{\nu}^*(\pi_2)=p_{3,-4} \tilde{\nu}^*(\omega^{-4})$.  Therefore, if
${\pi:\mathcal{B}\to M}$ is 2-integrable, then bi-secant surfaces in $M$
depend on two functions of one variable.  With the Cartan characters computed,
Cartan's test for involutivity applies, so integral surfaces for the linear
Pfaffian system exist in the real-analytic category.  

More can be said by employing modern theorems regarding hyperbolic exterior
differential systems \cite{Yang1987}.  The
characteristic variety of the tableau consists of two real points, and because
a generic tableau is involutive with $s_1=2$, Yang's generalization of the
Cartan--K\"ahler theorem to smooth hyperbolic systems implies that integral
surfaces exist and are parametrized by two functions of one variable in the
smooth category \cite[Theorem~1.19]{Yang1987}.  A special case of this
observation is revisited in Section~\ref{section-planar}.
\end{proof}

By restricting the torsion, Theorem~\ref{bisecant} provides the necessary and
sufficient first-order conditions for 2-integrability.  These first-order
conditions imply syzygies on the second-order invariants via the Bianchi
identity.

\begin{cor}
\label{int2-n4}
Suppose the $GL(2)$-structure ${\pi:\mathcal{B}\to M}$ is 2-integrable with
torsion $T$.  Let $S = \nabla(T)$ denote the covariant derivative of $T$, and
let $Q =T \circ T$ denote the symmetric product of $T$.
Then $\mathcal{B}$ has structure equations of the form
\begin{equation}
\begin{split}
\mathrm{d}\omega &= -\pair{\varphi,\omega}_1 - \pair{\lambda,\omega}_0 +
\pair{T,\pair{\omega,\omega}_1}_5 \\
\mathrm{d}\varphi &= - \frac12\pair{\varphi,\varphi}_1 
+\pair{R^2_0,\pair{\omega,\omega}_3}_0 
+\pair{48 Q_4 + 42 S_4, \pair{\omega,\omega}_3}_2\\
&\phantom{=} +\pair{45 S_6, \pair{\omega,\omega}_1}_5
+\pair{33 S_8, \pair{\omega,\omega}_1}_6
+ \pair{-8Q_4 - 12 S_4, \pair{\omega,\omega}_1}_4\\
\mathrm{d}\lambda &= 960 \pair{S_6, \pair{\omega,\omega}_1}_6\\
\end{split}
\label{int2-n4-str}
\end{equation}
for a scalar curvature function $R^2_0:\mathcal{B} \to \mathcal{V}_0$.
\end{cor}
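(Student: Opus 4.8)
The plan is to obtain the structure equations from the first Bianchi identity $\mathrm{d}^2\omega=0$, starting from the normal form already in hand. By Theorem~\ref{bisecant}, 2-integrability forces $T=T_8$, so the first structure equation of Corollary~\ref{coframe} collapses to $\mathrm{d}\omega=-\pair{\varphi,\omega}_1-\pair{\lambda,\omega}_0+\pair{T,\pair{\omega,\omega}_1}_5$, which is the first line of the claimed system. What remains is to pin down $\mathrm{d}\varphi$ and $\mathrm{d}\lambda$.

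First I would write the curvature with undetermined coefficients. Since $(\varphi,\lambda)$ is the connection of Corollary~\ref{coframe}, we may write $\mathrm{d}\varphi=-\tfrac12\pair{\varphi,\varphi}_1+\Phi$ and $\mathrm{d}\lambda=\Lambda$ with $\Phi$ a semibasic $\mathcal{V}_2$-valued $2$-form and $\Lambda$ a semibasic $\mathcal{V}_0$-valued $2$-form. As $\mathfrak{sl}(2)$-modules, $\mathcal{V}_2\otimes(\mathcal{V}_4\wedge\mathcal{V}_4)=\mathcal{V}_2\otimes(\mathcal{V}_2\oplus\mathcal{V}_6)=(\mathcal{V}_0\oplus\mathcal{V}_2\oplus\mathcal{V}_4)\oplus(\mathcal{V}_4\oplus\mathcal{V}_6\oplus\mathcal{V}_8)$ and $\mathcal{V}_0\otimes(\mathcal{V}_2\oplus\mathcal{V}_6)=\mathcal{V}_2\oplus\mathcal{V}_6$, so $\Phi$ is a sum of six Clebsch--Gordan terms and $\Lambda$ of two, each with an unknown coefficient function. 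At the same time I would record the new differential invariants: set $S:=\nabla T$, where $\nabla T=\mathrm{d}T+\theta\cdot T$ is the covariant exterior derivative on the associated $\mathcal{V}_8$-bundle, so that $\nabla T=\pair{S,\omega}$ with $S\in\mathcal{V}_8\otimes\mathcal{V}_4^{*}=\mathcal{V}_4\oplus\mathcal{V}_6\oplus\mathcal{V}_8\oplus\mathcal{V}_{10}\oplus\mathcal{V}_{12}$ having components $S_4,\dots,S_{12}$, and set $Q:=T\circ T\in\mathcal{V}_8\circ\mathcal{V}_8=\mathcal{V}_0\oplus\mathcal{V}_4\oplus\mathcal{V}_8\oplus\mathcal{V}_{12}\oplus\mathcal{V}_{16}$.

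The core step is to impose $\mathrm{d}^2\omega=0$, substituting the expressions above for $\mathrm{d}\varphi$, $\mathrm{d}\lambda$, $\mathrm{d}T$ together with a second copy of the first structure equation. Using the Leibniz rule for the pairings, the pure-connection contributions cancel by the bilinear identities satisfied by the Clebsch--Gordan pairings, and one is left with a semibasic $\mathcal{V}_4$-valued $3$-form identity that is linear in $\Phi$, $\Lambda$, and $S$, and quadratic in $T$ through $Q$. Since $\wedge^3\mathcal{V}_4^{*}\cong\wedge^2\mathcal{V}_4^{*}=\mathcal{V}_2\oplus\mathcal{V}_6$ (because $\wedge^5\mathcal{V}_4$ is the trivial module), this identity lives in $\mathcal{V}_4\otimes(\mathcal{V}_2\oplus\mathcal{V}_6)$, which contains no $\mathcal{V}_0$-summand; consequently the $\mathcal{V}_0$-component of $\Phi$ never enters (equivalently, $\pair{\pair{\omega,\omega}_3,\omega}_1\equiv0$), and it persists as the free scalar $R^2_0$. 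Decomposing the remaining identity into $\mathfrak{sl}(2)$-irreducibles and applying Schur's lemma produces one scalar relation in each weight; solving them expresses every other component of $\Phi$ and $\Lambda$ as an explicit linear combination of $Q_4$, $S_4$, $S_6$, $S_8$ (the only components of $Q$ and $S$ of compatible weight), with the remaining components of $Q$ and $S$ dropping out and the $\mathcal{V}_2$-part of $\Lambda$ forced to vanish. This is exactly the displayed form of $\mathrm{d}\varphi$ and $\mathrm{d}\lambda$.

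The main obstacle is purely computational: extracting the precise rational coefficients ($48,\ 42,\ 45,\ 33,\ -8,\ -12$ in $\mathrm{d}\varphi$ and $960$ in $\mathrm{d}\lambda$) demands careful bookkeeping of the normalizations in the nested pairings $\pair{\cdot,\cdot}_p$ and of the binomial weights in the chosen basis of $\mathcal{V}_4$ --- the sort of calculation the author carries out in \textsc{Maple}. Conceptually, the only point requiring care is the verification that, for this connection, the intrinsic curvature group $H^{1,2}$ reduces to a single copy of $\mathcal{V}_0$; this is what guarantees exactly one free function $R^2_0$, and everything else is then forced by $SL(2)$-equivariance and the Bianchi identity.
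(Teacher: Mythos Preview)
Your proposal is correct and follows essentially the same route as the paper: impose $\mathrm{d}^2\omega=0$ (equivalently, the Bianchi identity $\nabla(\theta)\wedge\omega=\nabla(T(\omega\wedge\omega))$), decompose $R$, $r$, $\nabla T$, and $Q$ into $SL(2)$-irreducibles, and solve the resulting scalar relations weight-by-weight via Schur's lemma. One small refinement: the paper records that the Bianchi identity also forces $S_{10}=0$, $r_2=0$, and $R^2_2=0$ as genuine constraints (not merely components that ``drop out''), though this does not affect the displayed structure equations.
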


\begin{proof}
Write $\nabla =
\mathrm{d}+\theta$ for the covariant derivative on $\mathcal{B}$ defined by
the connection 1-form $(\varphi,\lambda)$.  Second-order consequences of
2-integrability arise from the Bianchi identity, $\nabla(\theta)
\wedge\omega = \nabla(T(\omega\wedge\omega))$.  Curvature appears in
$\nabla(\theta)$, which splits into 
$R(\omega\wedge\omega) = \mathrm{d}\varphi +
\frac12\pair{\varphi,\varphi}_1$ and
$r(\omega\wedge\omega) = \mathrm{d}\lambda$.  
The covariant derivative of the torsion two-form,
$\nabla(T(\omega\wedge\omega))$, expands as  $\nabla
(T(\omega\wedge\omega)) = \nabla(T)(\omega\wedge\omega) + 2
Q(T,T)(\omega\wedge\omega\wedge\omega)$, so the Bianchi identity for a
$GL(2)$-structure is
\begin{equation}
R(\omega\wedge\omega)\wedge\omega + r(\omega\wedge\omega)\wedge\omega =
\nabla(T)(\omega\wedge\omega) + 2
Q(T,T)(\omega\wedge\omega\wedge\omega).
\label{bianchi}
\end{equation}
Each of $R$, $r$, $\nabla(T)$ and $Q$ is a function on $\mathcal{B}$ to the
appropriate $SL(2)$ module:
\begin{equation}
\begin{split}
R&:\mathcal{B} \to \mathfrak{sl}(2)\otimes (\mathcal{V}_4^* \wedge
\mathcal{V}_4^*),\\ 
r&:\mathcal{B} \to \mathbb{R} \otimes (\mathcal{V}_4^*\wedge\mathcal{V}_4^*),\\
\nabla(T)&:  \mathcal{B}\to  H^{0,2}(\mathfrak{gl}(2))\otimes
\mathcal{V}_4^*,\ \text{and}\\
Q&: \mathcal{B} \to \mathrm{Sym}^2(H^{0,2}(\mathfrak{gl}(2))\cap
(\mathcal{V}_4 \otimes \wedge^3\mathcal{V}_4^*).
\end{split}
\label{curvtures}
\end{equation}

Since $\mathcal{B}$ is 2-integrable, $T$ takes values only in $\mathcal{V}_8
\subset H^{0,2}(\mathfrak{gl}(2))$.
Using the Clebsch--Gordan decomposition, the irreducible components of these
functions are (omitting the domain $\mathcal{B}$ for brevity):
\begin{equation}\begin{split}
R &= (R^2_0 + R^2_2 + R^2_4) + (R^6_4 + R^6_6 + R^6_8) \in \mathcal{V}_2 \otimes
(\mathcal{V}_2 \oplus \mathcal{V}_6),\\
r &= r_2 + r_6 \in \mathcal{V}_0 \otimes (\mathcal{V}_2 \oplus \mathcal{V}_6),\\
\nabla T &=  S_4 + S_6 + S_8 + S_{10} + S_{12} \in \mathcal{V}_8 \otimes \mathcal{V}_4,\ \text{and}\\
Q &= Q_4 + Q_8 \in \mathrm{Sym}^2(\mathcal{V}_8) \cap (\mathcal{V}_4 \otimes
(\mathcal{V}_2 \oplus \mathcal{V}_6)).
\end{split}\label{bianchicomps}
\end{equation}

Thus, Equation~(\ref{bianchi}) and Schur's lemma together imply linear
relations among the irreducible components listed in Equation~(\ref{bianchicomps}).
To find these relations, one can expand Equation~(\ref{bianchi}) using the
Clebsch--Gordan pairing; for example, one of the terms is
\begin{equation}
\pair{\nabla T, \pair{\omega,\omega}_1}_5
=
\pair{ 
    \pair{S_{ 4}, \omega}_0 
   +\pair{S_{ 6}, \omega}_1 
   +\pair{S_{ 8}, \omega}_2 
   +\pair{S_{10}, \omega}_3 
   +\pair{S_{12}, \omega}_4,
\pair{\omega,\omega}_1}_5.
\end{equation}
The result is 
$S_{10}=0$, 
$R^6_8 = 33 S_8$,
$R^6_6 = 45 S_6$,
$r_6 = 960 S_6$,
$R^6_4 = -12 S_4 - 8 Q_4$, 
$R^2_4 = 42 S_4 + 48 Q_4$,
$r_2 = 0$, and $R^2_2 = 0$.
In particular, $R^2_0$ is the only irreducible component of $r$ and $R$ that
is not an algebraic function of $T$ and $\nabla(T)$.
\end{proof}

\section{3-Integrability}
\label{3int}

\begin{thm} 
If an analytic $GL(2)$-structure ${\pi:\mathcal{B}\to M}$ is 3-integrable, then tri-secant 
3-folds in $M$ are locally parametrized by three functions of one variable.
\label{thm:3param} 
\end{thm}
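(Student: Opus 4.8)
The plan is to mimic the proof of Theorem~\ref{bisecant}, but one dimension up: set up the linear Pfaffian exterior differential system whose integral 3-folds are exactly the tri-secant submanifolds through a given tri-secant 3-plane, and then apply Cartan's test. First I would fix a local $GL(2)$ coframe $u:M'\to\mathcal{B}(M')$ and, at the point $p$, use the $GL(2)$ action to normalize an arbitrary tri-secant 3-plane $E\subset\mathbf{T}_pM$. Since $E$ meets the cone $\mathbf{C}_p$ in three distinct lines, we may arrange $b(E)=\operatorname{span}\{x^4,\ (x+y)^4\ \text{or similar},\ y^4\}$; in practice it is cleanest to pick three distinct roots, say $x^4$, $y^4$, and one more of the form $(x-cy)^4$, and then $\omega$ restricted to $\tilde E=u_*(E)$ spans a three-dimensional space while the other two components $\omega^{?}$ vanish on $\tilde E$. (Unlike the bi-secant case there is no longer a fully transitive $GL(2)$ action on tri-secant 3-planes — there is a one-parameter cross-ratio invariant — so I would need to be careful that the argument runs uniformly over this family, or else invoke that, by Theorem~\ref{bisecant}, $T=T_8$ and the relevant torsion is already heavily constrained.)

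Next I would write out the generating 2-forms $\mathrm{d}\omega^{a}\equiv(\cdots)\wedge\omega^{b}+\tau^{a}\,\Omega$ modulo the ideal, where $\Omega$ is the independence 3-form built from the three surviving components of $\omega$, exactly as in Equations~(\ref{involutive structure})--(\ref{involutive torsion}) but now for a rank-2 Pfaffian system with a 3-dimensional independence condition. The torsion coefficients $\tau^{a}$ are linear in the components of $T$ (hence, by 2-integrability, linear in $T_8$) together with the unabsorbed connection coefficients $\varphi$. Absorbable torsion is killed by redefining the $\varphi$'s; the obstruction to absorption is where 3-integrability bites. If $\mathcal{B}$ is 3-integrable, then every tri-secant 3-plane in $\mathbf{T}_pM$ extends, so — after averaging over the $GL(2)$-orbit of $E$ (and, where the orbit is not transitive, over the cross-ratio parameter as well) — the non-absorbable part of $\tau$ must vanish as an identity in the components of $T$; Schur's lemma then forces the corresponding irreducible pieces of $T$ (and its covariant derivative, via Corollary~\ref{int2-n4}) to vanish. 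But for the present theorem I only need the \emph{parametrization} count, so I can take 3-integrability as a hypothesis and simply record that the torsion is absorbed, leaving the tableau
\begin{equation}
\mathrm{d}\omega^{a}\equiv \pi^{a}_{\ b}\wedge\omega^{b},\qquad \mod\ \{\omega^{a}\},
\end{equation}
with the $\pi^{a}_{\ b}$ the reduced (modified) connection forms.

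The heart of the argument is then Cartan's test applied to this tableau. I would compute the Cartan characters $s_1,s_2,s_3$ for a generic flag in $\mathbf{T}_pN$ obtained from generic linear combinations of the three independence forms. I expect to find $s_1$ equal to the number of free entries in the first column of the tableau matrix and $s_2=s_3=0$ (this mirrors $s_1=2$, $s_2=0$ in the bi-secant case), and the dimension of the space of integral 3-elements should match $s_1+2s_2+3s_3=s_1$, so that the system is involutive. Then Cartan--K\"ahler (in the analytic category, as assumed in the statement) gives existence, and the last free coefficient being a function of one variable shows that tri-secant 3-folds through a given tri-secant 3-plane form a family depending on that many functions of one variable; since the 3-plane itself ranges over a finite-dimensional set, the count ``$s_1$ functions of one variable'' survives. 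I would then identify $s_1=3$ from the explicit tableau to get the stated count of three functions of one variable.

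The main obstacle I anticipate is twofold. First, the bookkeeping: verifying that the torsion genuinely absorbs (equivalently, that the tableau is in the standard Pfaffian normal form) requires the explicit structure equations — really Corollary~\ref{int2-n4} together with the appendix data — and confirming that the leftover torsion is consistent is precisely where one would lean on computer algebra. Second, and more subtly, the $GL(2)$-orbit of a tri-secant 3-plane is not a single point in the relevant Grassmannian quotient (the cross-ratio of the three lines is an invariant), so I must either check Cartan's test for a generic value of that parameter and argue semicontinuity, or show the characters are constant along the orbit; I expect the characters are in fact constant (the tableau shape does not change), so this is a minor but necessary point rather than a real difficulty. Smoothness, if desired, would follow as in Theorem~\ref{bisecant} from the hyperbolicity of the characteristic variety and Yang's theorem, but the statement here is only claimed in the analytic category, so Cartan--K\"ahler suffices.
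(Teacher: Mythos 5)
There is a genuine gap at the heart of your plan: you expect the naive linear Pfaffian system for tri-secant 3-folds to be involutive with characters $s_1=3$, $s_2=s_3=0$, but it is not. With the normalization $b(E)=\operatorname{span}\{x^4,(x+y)^4,y^4\}$ the ideal is generated by $\kappa^{-2}=\omega^{-2}-\omega^{0}$ and $\kappa^{2}=\omega^{2}-\omega^{0}$ with independence condition $\omega^{-4}\wedge\omega^{0}\wedge\omega^{4}\neq 0$, and after absorbing the apparent torsion the tableau has Cartan characters $s_1=2$, $s_2=1$, $s_3=0$, so $s_1+2s_2+3s_3=4$, while the space of integral $3$-elements is only $3$-dimensional; Cartan's test fails and the system must be \emph{prolonged}. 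It is the prolonged system $\mathcal{I}^{(1)}$ (adding the three forms $\eta^1,\eta^2,\eta^3$ that encode the choice of integral element) that is involutive, with $s_1=3$, $s_2=s_3=0$, and the advertised count of three functions of one variable is exactly the $s_1=3$ of that prolongation. Your argument as written would stall at the Cartan test (or, worse, if you trusted the naive characters $s_1=2,s_2=1$ you would report the wrong generality), so the missing idea is the prolongation step, together with the observation that 3-integrability is what guarantees the prolonged torsion $\tau^{(1)}$ vanishes or absorbs so that Cartan--K\"ahler applies.

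Two smaller points. First, your worry about a cross-ratio invariant is unfounded: a tri-secant 3-plane meets the cone in \emph{three} distinct lines, and $PGL(2)$ acts transitively on triples of distinct points of $\mathbb{RP}^1$ (the cross-ratio only obstructs transitivity for four points), which is why the normalization to $x^4$, $(x+y)^4$, $y^4$ is legitimate and uniform; relatedly, the defining 1-forms of $E$ are differences such as $\omega^{-2}-\omega^{0}$, not vanishing coordinate components of $\omega$. Second, the theorem assumes only 3-integrability, so you are not entitled to invoke Theorem~\ref{bisecant} to set $T=T_8$; the paper's proof makes no use of 2-integrability here, and none is needed since the first-level torsion absorbs without any restriction on $T$.
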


For 3-integrable $GL(2)$-structures that arise from PDEs of hydrodynamic type
as in Theorem~\ref{thm:F23int} (and are therefore also 2-integrable), this
parametrization by three functions of one variable confirms the computation
presented in \cite{Ferapontov2009}.

\begin{proof}
This theorem is proven by applying Cartan--K\"ahler theory to a differential
ideal whose integral manifolds are tri-secant 3-folds $N \subset M$ through an
arbitrary tri-secant element $E \in Gr_3(\mathbf{T}_pM)$.

As in the proof of Theorem~\ref{bisecant}, consider a local $GL(2)$ coframe
$u$, and let $b=u(p)$.  Since $E$ is tri-secant, $b(E)$ is spanned by $(g_1x
- h_1y)^4$, $(g_2x-h_2y)^4$, and $(g_3x - h_3 y)^4$, distinct, but $u$ can be
adapted so that $b(E)$ is spanned by $x^4$, $y^4$, and $(x+y)^4$.
Therefore
\begin{equation}
b(E) = \{ (A+B)x^4 + B (4x^3y  + 6x^2y^2 + 4xy^3) + (B+C)y^3~:~ A,B,C \in
\mathbb{R}\} \subset \mathcal{V}_4.
\end{equation}
Just as in Theorem~\ref{bisecant}, lift to $\tilde{E} = u_*(E) \in
Gr_2(\mathbf{T}_b \mathcal{B})$, which is
integral to the linear Pfaffian system $\mathcal{I}$ generated by $\kappa^{-2} =
\omega^{-2} - \omega^{0}$ and $\kappa^{ 2} = \omega^{2}-\omega^{0}$ with the
independence condition $\omega^{-4}\wedge\omega^{0}\wedge\omega^{4} \neq 0$.
Again, the projection to $M$ of any integral 3-fold of $\mathcal{I}$ will be a
tri-secant 3-fold that passes through $E$, and every tri-secant 3-fold arises
this way (up to a $GL(2)$ frame adaptation).

The tableau and torsion of $\mathcal{I}$ are given by 
\begin{equation}
\mathrm{d}
\begin{pmatrix}
\kappa^{-2} \\ \kappa^{2} 
\end{pmatrix}\equiv
\begin{pmatrix}
\pi_1 & \pi_3 & 0 \\
0 & -\pi_1 - \pi_2 -\pi_3 & \pi_2
\end{pmatrix}
\wedge
\begin{pmatrix}
\omega^{-4} \\ \omega^{0} \\ \omega^{4}
\end{pmatrix}
+ 
\sum_{a<b}
\begin{pmatrix}
\tau^{-2}_{a,b} \\
\tau^{ 2}_{a,b}
\end{pmatrix}
 \omega^{a}\wedge\omega^{b}
\end{equation}
modulo $\kappa^{-2}, \kappa^{2}$, where $\pi_1 = -2\varphi_{2}$, $\pi_2 =
2\varphi_{-2}$, and $\pi_3 = 2\varphi_{-2} - 4\varphi_{0} + 4\varphi_{2}$.
The apparent torsion can be fully absorbed by redefining $\hat \pi_i = \pi_i -
p_{i,a}\omega^a$ for the parameters
\begin{equation}
\begin{split}
p_{1, 4} &= - \tau^{-2}_{-4,4}\\
p_{2,-4} &= \tau^2_{-4,4} \\
p_{3, 4} &= -\tau^{-2}_{0,4} \\
p_{3,-4} &= - p_{1,-4} - \tau^2_{-4,4} - \tau^2_{-4,0}\\
p_{2, 4} &= -p_{2,0} -p_{3,4} -p_{1,4} + \tau^2_{0,4}  
         = -p_{2,0} +\tau^{-2}_{0,4} +\tau^{-2}_{-4,4} + \tau^2_{0,4}\\
p_{1, 0} &= p_{3,-4} - \tau^{-2}_{-4,4} = -p_{1,-4} -\tau^2_{-4,4} - \tau^2_{-4,0} - \tau^2_{-4,4}.
\end{split}
\end{equation}
The integral elements are still free up to arbitrary choice of three
functions, $p_{1,-4}$, $p_{2,0}$, and $p_{3,0}$.  Since the Cartan characters
are $s_1=2$, $s_2=1$, and $s_3 =0$, but  $s_1+2s_2 + 3s_3 = 4 \neq 3$, the
tableau is not involutive; prolongation is required.

Let $\mathcal{I}^{(1)}$ be the prolonged ideal, which is differentially
generated by the forms $\kappa^{-2}$ and $\kappa^{2}$ along with
\begin{equation}
\begin{split}
\eta^1 &= \pi_1 +p_{1,-4}\ \omega^{-4}-(p_{1,-4} + \tau^2_{-4,4} + \tau^2_{-4,0} + \tau^2_{-4,4})\ \omega^0
- \tau^{-2}_{-4,4}\ \omega^{4},\\
\eta^2 &= \pi_2 + \tau^2_{-4,4}\ \omega^{-4} + p_{2,0}\ \omega^0 + 
(-p_{2,0} +\tau^{-2}_{0,4} +\tau^{-2}_{-4,4} + \tau^2_{0,4})\ \omega^4,
\text{ and }\\
\eta^3 &= \pi_3 + (- p_{1,-4} - \tau^2_{-4,4} - \tau^2_{-4,0})\ \omega^{-4} +
p_{3,0}\ \omega^0 + -\tau^{-2}_{0,4}\ \omega^4.
\end{split}
\end{equation}
After this prolongation, the tableau and torsion are given by 
\begin{equation}
\mathrm{d}\begin{pmatrix}
\kappa^{-2}\\ \kappa^{2}\\
\eta^1 \\ \eta^2 \\ \eta^3 
\end{pmatrix}\equiv
\begin{pmatrix}
0 & 0 & 0 \\
0 & 0 & 0 \\
\pi_4 & -\pi_4 & 0 \\
0 & \pi_5 & -\pi_5 \\
-\pi_4 & \pi_6 & 0 
\end{pmatrix}
\wedge
\begin{pmatrix}
\omega^{-4}\\ \omega^{0}\\ \omega^{4}
\end{pmatrix}
+ \tau^{(1)}(\eta\wedge\eta),
\label{full3int}
\end{equation}
modulo $\kappa^{-2}, \kappa^{2}, \eta^1, \eta^2, \eta^3$.  This tableau has
Cartan characters $s_1=3$, $s_2=0$, and $s_3=0$.  Applying Cartan's test, $s_1
+ 2s_2+ 3s_3  = 3$, which matches the dimension of the variety of
three-dimensional integral elements of $\mathcal{I}^{(1)}$, so the tableau is
involutive.  When the apparent torsion $\tau^{(1)}$ vanishes or can be
absorbed, then in the analytic category Cartan's test for involutivity
implies that the integral 3-folds locally depend on three functions of one
variable.

In fact, one can say more by analyzing the characteristic variety of the involutive
linear Pfaffian system presented in Equation~(\ref{full3int}).   Given the
highest non-zero Cartan character, $s_1=3$, it is clear that the complex
characteristic variety has dimension zero and degree three
\cite[Chapter~V]{Bryant1991}.  It is easy to verify that the characteristic
variety consists of three real points, so each 3-fold integral
to $\mathcal{I}^{(1)}$ is foliated by three families of 2-folds integral to
$\mathcal{I}^{(1)}$.  A special case of this fact is revisited in
Corollary~\ref{holonomic-net}.  
\end{proof}

To determine sufficient conditions for \emph{existence} of tri-secant 3-folds,
one must study the unabsorbable portion of the remaining torsion, $\tau^{(1)}$.
Because $\tau^{(1)}$ is the torsion of the prolonged system
$\mathcal{I}^{(1)}$, it will involve second-order invariants of the
$GL(2)$-structure $\mathcal{B}$ that appear in the Bianchi identity for $\mathcal{B}$,
Equation~(\ref{bianchi}).  Since $T$ is \emph{a priori} valued in
$H^{0,2}(\mathfrak{gl}(2))= \mathcal{V}_2 \oplus \mathcal{V}_6 \oplus
\mathcal{V}_8 \oplus \mathcal{V}_{10}$, components of any of the following
functions may occur in $\tau^{(1)}$:
\begin{equation}
\begin{split}
R &: \mathcal{B}\to \mathcal{V}_2 \otimes  (\wedge^2 \mathcal{V}_4), \\
r &: \mathcal{B}\to  \mathcal{V}_0 \otimes (\wedge^2\mathcal{V}_4), \\
\nabla T &: \mathcal{B}\to (\mathcal{V}_2 \oplus \mathcal{V}_{6} \oplus \mathcal{V}_8 \oplus
\mathcal{V}_{10}) \otimes \mathcal{V}_4, \\
Q &: \mathcal{B}\to \mathrm{Sym}^2(\mathcal{V}_2 \oplus \mathcal{V}_6 \oplus
\mathcal{V}_{8} \oplus \mathcal{V}_{10}) \cap ( \mathcal{V}_4 \otimes \wedge^3
\mathcal{V}_4 ).
\end{split}
\label{eqn:int3-curv}
\end{equation}
The vanishing of the unabsorbable portion of $\tau^{(1)}$ will place
restrictions on the various irreducible representations appearing in
Equation~(\ref{eqn:int3-curv}).  The enormous complexity of $Q$ and $\nabla T$ makes
decomposition of the unabsorbable portion of $\tau^{(1)}$ extremely difficult.
Fortunately, one can make a simplifying assumption that is consistent with the
motivating PDE theory in Theorem~\ref{thm:F23int}; henceforth, all theorems
discuss only those $GL(2)$-structures that are both 2-integrable and
3-integrable.

\begin{cor} 
If $\mathcal{B}\to M$ is a 2,3-integrable $GL(2)$-structure, then any tri-secant $N^3
\subset M$ is triply foliated by bi-secant surfaces.  Moreover, the net
defined by $\mathbf{T}N \cap \mathbf{C}$ is a coordinate net. 
\label{holonomic-net}
\end{cor}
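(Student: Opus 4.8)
The plan is to prove that on any tri-secant $N^{3}$ each of the three bi-secant plane distributions contained in $\mathbf{T}N$ is Frobenius-integrable, with bi-secant leaves, and then to read off the coordinate net from the resulting triad of transverse codimension-one foliations. I would run this through the machinery already set up in the proof of Theorem~\ref{thm:3param}.

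First I would fix notation. Since $N$ is tri-secant, $\mathbf{T}N\cap\mathbf{C}$ consists at each point of three distinct lines, so it defines three smooth line fields $L_{1},L_{2},L_{3}$ with $\mathbf{T}N=L_{1}\oplus L_{2}\oplus L_{3}$, and the only bi-secant $2$-planes inside a tri-secant $3$-plane are $E_{12}=L_{1}\oplus L_{2}$, $E_{13}=L_{1}\oplus L_{3}$, $E_{23}=L_{2}\oplus L_{3}$. Adapting the local $GL(2)$ coframe $u$ along $N$ so that $L_{1},L_{2},L_{3}$ become the cone lines $[x^{4}]$, $[(x+y)^{4}]$, $[y^{4}]$ (possible because $GL(2)$ is transitive on ordered triples of distinct cone lines), and then lifting $N$ first to $\tilde N\subset\mathcal{B}$ and, by recording the $1$-jet of the frame, to an integral $3$-fold of the prolonged system $\mathcal{I}^{(1)}$ of Equation~(\ref{full3int}), one has $\omega^{-2}=\omega^{0}=\omega^{2}$ there while $\omega^{-4}\wedge\omega^{0}\wedge\omega^{4}\neq 0$. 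In this frame a short linear computation with the binomial identification shows $E_{13}=\ker\omega^{0}$, $E_{12}=\ker(\omega^{0}-\omega^{4})$, and $E_{23}=\ker(\omega^{-4}-\omega^{0})$ along the lifted $3$-fold.

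The heart of the argument is the characteristic variety of the involutive system $\mathcal{I}^{(1)}$. In the proof of Theorem~\ref{thm:3param} it was noted that, since $s_{1}=3$ is the only nonzero Cartan character, the complex characteristic variety of $\mathcal{I}^{(1)}$ is zero-dimensional of degree three and consists of three real points. A direct check identifies those three covectors with $[\omega^{0}]$, $[\omega^{0}-\omega^{4}]$, $[\omega^{-4}-\omega^{0}]$, i.e. with the conormals of $E_{13}$, $E_{12}$, $E_{23}$. By the structure theory for hyperbolic linear Pfaffian systems \cite{Yang1987}, each integral $3$-fold of $\mathcal{I}^{(1)}$ is then foliated by three families of integral $2$-folds, the leaves of the $k$-th family being tangent to the annihilator of the $k$-th characteristic covector. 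Projecting down to $M$ (a local diffeomorphism on the lifted $3$-fold) yields three transverse foliations of $N$ whose leaves are tangent respectively to $E_{13}$, $E_{12}$, $E_{23}$; since each of these $2$-planes is bi-secant, the leaves are bi-secant surfaces. This gives the first assertion.

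The coordinate net is then formal. Each of the three foliations is codimension one, hence near any point is cut out by a submersion; choose $f_{1},f_{2},f_{3}$ so that $\mathrm{d}f_{1},\mathrm{d}f_{2},\mathrm{d}f_{3}$ span the conormal lines of $E_{13},E_{12},E_{23}$, which along the lifted $3$-fold are $\langle\omega^{0}\rangle$, $\langle\omega^{0}-\omega^{4}\rangle$, $\langle\omega^{-4}-\omega^{0}\rangle$. These three covectors are linearly independent, so $(f_{1},f_{2},f_{3})$ is a local coordinate system on $N$; in it $L_{1}=E_{12}\cap E_{13}$ is the $\partial/\partial f_{3}$-direction, $L_{2}=E_{12}\cap E_{23}$ is the $\partial/\partial f_{1}$-direction, and $L_{3}=E_{13}\cap E_{23}$ is the $\partial/\partial f_{2}$-direction, so the net $\mathbf{T}N\cap\mathbf{C}$ is the coordinate net. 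The one step I expect to be the real obstacle is the identification, in the middle paragraph, of the three characteristic covectors of $\mathcal{I}^{(1)}$ with the conormals of the three bi-secant $2$-planes; equivalently, that the $\mathcal{V}_{8}$-valued torsion $T=T_{8}$ (Theorem~\ref{bisecant}) contributes nothing to $\mathrm{d}\omega^{0}$, $\mathrm{d}(\omega^{0}-\omega^{4})$, $\mathrm{d}(\omega^{-4}-\omega^{0})$ modulo those forms when restricted to $\tilde N$, which one can also verify by hand from $\mathrm{d}\omega=-\theta\wedge\omega+T(\omega\wedge\omega)$ together with the values of $\pi_{1},\pi_{2},\pi_{3}$ along the integral manifold recorded in the proof of Theorem~\ref{thm:3param}. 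Everything else is either imported from that proof or is the standard passage from a triad of transverse codimension-one foliations to a coordinate system.
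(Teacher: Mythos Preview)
Your argument is correct and is essentially the paper's own proof; you identify the same three $1$-forms (the paper writes $\Upsilon^1=\omega^{-4}-\omega^0$, $\Upsilon^2=\omega^0$, $\Upsilon^3=\omega^4-\omega^0$, matching your conormals up to sign and labeling) and arrive at the coordinate net via Frobenius.  The only real difference is in how the Frobenius condition is justified: the paper carries out the direct computation $\mathrm{d}(\tilde\nu^*\Upsilon^i)\equiv 0 \bmod \tilde\nu^*\Upsilon^i$ using the explicit description of the integral elements of $\mathcal{I}^{(1)}$ (the parameters $P_1,P_2,P_3$ in the $\pi_4,\pi_5,\pi_6$ equations), whereas you appeal to the characteristic-variety structure of $\mathcal{I}^{(1)}$ already noted at the end of the proof of Theorem~\ref{thm:3param} and then invoke the general foliation theory for hyperbolic tableaux.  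Your identification of the three real characteristic covectors with $[\omega^0]$, $[\omega^{-4}-\omega^0]$, $[\omega^0-\omega^4]$ is correct (one checks it from the $\pi_4,\pi_5,\pi_6$ columns in Equation~(\ref{full3int})), and the paper's remark ``a special case of this fact is revisited in Corollary~\ref{holonomic-net}'' is exactly the observation you exploit.  Your route is slightly more conceptual; the paper's is slightly more self-contained, avoiding the citation to the general hyperbolic theory.
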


The triple foliation by bi-secant surfaces appears to be new, though it is not
surprising based on the description of highest-weight polynomial subspaces of
$\mathcal{V}_n$ seen in \cite{Crooks2009}.  This foliation shows that the
``holonomic characteristic net'' condition for tri-secant 3-folds seen in
\cite{Ferapontov2009} is superfluous since 2-integrability is implied for
$GL(2)$-structures arising from Hessian hydrodynamic PDEs.

\begin{proof}
Without loss of generality in a contractible neighborhood in $M$, one may
apply a $GL(2)$ change-of-frame so that  
$\mathbf{T}N = \{ Ax^4 + B(x+y)^4 +Cy^4\}$. 
The characteristic net, $\mathbf{T}N \cap \mathbf{C}$, is given by its tangent
planes 
\begin{equation}
\begin{split}
K_1 &= \mathop{\mathrm{span}}\{ (x+y)^4, y^4\},\\
K_2 &= \mathop{\mathrm{span}}\{ x^4, y^4\},\ \text{and}\\
K_3 &= \mathop{\mathrm{span}}\{ x^4, (x+y)^4\}.
\end{split}
\end{equation}
Each of $K_1$, $K_2$, and $K_3$ is clearly bi-secant.  It suffices to prove
that they are everywhere tangent to the level sets of a coordinate system on
$N$, as these level sets will be bi-secant surfaces.

Recall that $N$ is defined by the projection of an integral manifold of  
the prolonged system $\mathcal{I}^{(1)}$ on
$\mathcal{B}\times \mathbb{R}^3$ from Theorem~\ref{thm:3param} .
In particular, there is a submanifold $\tilde{N} \subset \mathcal{B}$ with
embedding map $\tilde{\nu}:\tilde{N} \to \mathcal{B}$ depending on three
parameters $P_1, P_2, P_3$ such that each $\mathbf{T}_p N$ is the projection of 
\begin{equation}
\begin{split}
\mathbf{T}_b\tilde{N} &= \ker \big(\{ \kappa^{-2}, \kappa^{2}, \eta^1, \eta^2,
\eta^3,\\ 
&\phantom{=\ker( \{ }
\pi_4 - P_1 (\omega^{-4}  - \omega^{0}),\\
&\phantom{=\ker( \{ }
\pi_5 - P_2 (\omega^{0}  - \omega^{4}),\\
&\phantom{=\ker( \{ }
\pi_6 - P_1 \omega^{-4} +  P_3 \omega^{0}\}\big).
\end{split}
\end{equation}

Let $\Upsilon^1 = \omega^{-4}-\omega^{0}$, $\Upsilon^2 = \omega^0$, and
$\Upsilon^3=\omega^4 - \omega^0$.  Therefore, at each basepoint, $K_i \subset
\mathbf{T}_p N$ is the projection of the plane $\ker(\tilde{\nu}^* (\Upsilon^i))
\subset \mathbf{T}_b \tilde{N}$.  One can easily compute that
$\mathrm{d}(\tilde{\nu}^*(\Upsilon^i)) \equiv 0$ modulo $\tilde{\nu}^*(\Upsilon^i)$ for each
$i$; therefore, the Frobenius theorem provides local
coordinates $(s^1, s^2, s^3)$ on $N$ such that $K_i = \ker(\mathrm{d}s^i)$.
\end{proof}
When $\mathcal{B}$ arises from a Hessian hydrodynamic PDE, these coordinates
are called the Riemann invariants of the hydrodynamic reduction.

\begin{thm}[2,3-integrable $GL(2)$-structure equations] 
A $GL(2)$-structure $\mathcal{B}$ is 2,3-integrable if and only if the torsion
$T$ of $\mathcal{B}$ only takes values in $\mathcal{V}_8$ and the curvature is a
function of $T$.  In particular, every 2,3-integrable $GL(2)$-structure $\mathcal{B}$ has the
following structure equations
\begin{equation}
\begin{split}
\mathrm{d}\omega &= 
-\pair{\varphi,\omega}_1 
-\pair{\lambda,\omega}_0 +
 \pair{T, \pair{\omega,\omega}_1}_5\\
\mathrm{d}\lambda &= 0\\
\mathrm{d}\varphi &= -\frac12\pair{\varphi,\varphi}_1 
 -2080        \pair{\pair{T,T}_8,\pair{\omega,\omega}_3}_0 
 +64          \pair{\pair{T,T}_6,\pair{\omega,\omega}_3}_2 \\
&\quad-\frac{88}{7}\pair{\pair{T,T}_6,\pair{\omega,\omega}_1}_4  
 +\frac{24}{7}\pair{\pair{T,T}_4,\pair{\omega,\omega}_1}_6\\
\mathrm{d}T &= J(T) \begin{pmatrix}\omega\\\lambda\\\varphi\end{pmatrix}
\end{split}
\label{eqn:int3-n4-str}
\end{equation}
for a $9 \times 9$ matrix-valued function $J$ whose entries are linear and quadratic
polynomials in the coefficients of $T$, as provided in Appendix~\ref{appxJ}
and \cite{GL2maplecode, Smith2009}.
\label{int3-n4-str}
\end{thm}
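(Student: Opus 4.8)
The plan is to continue the exterior differential systems analysis begun in Corollary~\ref{int2-n4}, adding the constraints imposed by 3-integrability. By Theorem~\ref{bisecant}, 2-integrability already forces $T = T_8 \in \mathcal{V}_8$, and Corollary~\ref{int2-n4} gives the structure equations for $\mathrm{d}\omega$, $\mathrm{d}\varphi$, $\mathrm{d}\lambda$ in terms of $T$, its covariant derivative $S = \nabla T \in \mathcal{V}_8 \otimes \mathcal{V}_4 = S_4 \oplus S_6 \oplus S_8 \oplus S_{12}$ (recall $S_{10}=0$), the quadratic term $Q = T\circ T$, and the single free scalar curvature $R^2_0$. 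So the remaining work is to show that 3-integrability forces (i) $R^2_0$ to be a specific quadratic expression in $T$, (ii) $S_4, S_6, S_{12}$ (and perhaps part of $S_8$) to vanish or to be algebraic in $T$, and (iii) the surviving first covariant derivatives of $T$ to be expressible linearly in $\omega$ with coefficients quadratic in $T$ — which is exactly what $\mathrm{d}T = J(T)(\omega,\lambda,\varphi)$ encodes. The converse direction (that these structure equations do define a 2,3-integrable $GL(2)$-structure) follows because once the equations close with no residual torsion, Theorem~\ref{bisecant} and Theorem~\ref{thm:3param} apply directly: the tableaux computed there become involutive with absorbable torsion.

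\textbf{Key steps, in order.} First, I would return to the prolonged linear Pfaffian system $\mathcal{I}^{(1)}$ from the proof of Theorem~\ref{thm:3param} and compute its residual torsion $\tau^{(1)}$ explicitly, decomposed into $SL(2)$-irreducible pieces. Because the tri-secant element $E$ was normalized to $\mathrm{span}\{x^4,(x+y)^4,y^4\}$ but the $GL(2)$-action is transitive on tri-secant $3$-planes, the vanishing of the unabsorbable part of $\tau^{(1)}$ at one such $E$ propagates — via Schur's lemma applied to the full $GL(2)$-orbit — to identities among the irreducible components of $R$, $r$, $\nabla T$, $Q$ listed in Equation~(\ref{eqn:int3-curv}), specialized to $T = T_8$. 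Second, I would feed these identities back into the Bianchi identity, Equation~(\ref{bianchi}), together with the relations already extracted in Corollary~\ref{int2-n4} ($R^6_8 = 33 S_8$, $R^6_6 = 45 S_6$, $r_6 = 960 S_6$, $R^6_4 = -12 S_4 - 8 Q_4$, $R^2_4 = 42 S_4 + 48 Q_4$, $r_2 = R^2_2 = 0$), and solve the resulting linear system over the $SL(2)$-module decomposition. I expect this to pin down all of $S_4, S_6, S_{12}$ and $R^2_0$ in terms of the pairings $\pair{T,T}_4$, $\pair{T,T}_6$, $\pair{T,T}_8$, producing the explicit coefficients $-2080$, $64$, $-\tfrac{88}{7}$, $\tfrac{24}{7}$ appearing in $\mathrm{d}\varphi$, and forcing $\mathrm{d}\lambda = 0$ (i.e. $r = 0$ entirely). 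Third, the one genuinely second-order datum that survives is $\nabla T$ restricted to its $\mathcal{V}_8$-component, $S_8$; closing the system requires differentiating once more (a secondary Bianchi / $\mathrm{d}^2 = 0$ computation on $\mathrm{d}T$) to verify that $\mathrm{d}S_8$ introduces no new invariants — equivalently, that the full $9\times 9$ system $\mathrm{d}T = J(T)(\omega,\lambda,\varphi)$ is formally integrable, with $\mathrm{d}(J(T)(\omega,\lambda,\varphi)) \equiv 0$ identically in $T$. This is the step that actually produces the matrix $J(T)$ of Appendix~\ref{appxJ}.

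\textbf{Main obstacle.} The hard part will be step two: the quadratic term $Q = T\circ T$ and the covariant derivative $\nabla T$ are, as the text warns, of ``enormous complexity,'' so decomposing $\tau^{(1)}$ and the cubic term $Q(T,T)(\omega\wedge\omega\wedge\omega)$ in the Bianchi identity into irreducibles and matching coefficients is a substantial Clebsch--Gordan computation — precisely the sort of thing the author defers to the \textsc{Maple} code \cite{GL2maplecode, Smith2009}. Conceptually, though, the content is routine: every step is an application of Cartan--K\"ahler prolongation plus Schur's lemma, and the only real subtlety is bookkeeping the conformal scaling variance of the pairings (since they are $SL(2)$- but not $GL(2)$-equivariant) when tracking how $\lambda$ enters $J(T)$. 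Once $J(T)$ is in hand, $\mathrm{d}^2\omega = \mathrm{d}^2\varphi = \mathrm{d}^2\lambda = \mathrm{d}^2 T = 0$ is a finite check that certifies the structure equations are consistent, completing both directions of the theorem.
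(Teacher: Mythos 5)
Your plan follows the paper's own proof almost exactly: the paper likewise starts from the relations of Corollary~\ref{int2-n4}, imposes the vanishing of the $GL(2)$-orbit of the unabsorbable part of $\tau^{(1)}$ from Equation~(\ref{full3int}), and combines this with the Bianchi identity, solving the resulting linear system over the $SL(2)$-module decomposition via Schur's lemma. The one place your expectations diverge concerns $S_8$: you treat it as the surviving second-order datum, to be disposed of afterwards by a secondary $\mathrm{d}^2$-computation, but in fact the $\tau^{(1)}$-vanishing together with the Bianchi identity already pins it down --- the relations obtained are $S_{12}=0$, $S_6=0$, $7S_8 = 24Q_8 = 24\pair{T,T}_4$, $21S_4 = 8Q_4 = 8\pair{T,T}_6$, and $R^2_0 = -2080\pair{T,T}_8$ --- so \emph{all} of $\nabla T$ is quadratic in $T$ at this stage, and $J(T)$ drops out directly (its $\lambda$- and $\varphi$-columns being nothing but the infinitesimal $\mathcal{V}_8$-action on $T$, hence linear). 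This matters: if $S_8$ genuinely survived as an independent invariant, the asserted form $\mathrm{d}T = J(T)(\omega,\lambda,\varphi)$ with $J$ a function of $T$ alone would be false, so it must be killed at this step and not by a later prolongation; the $\mathrm{d}^2=0$ check you describe is still required, but only as the closure/consistency verification that subsequently feeds Cartan's structure theorem (Corollaries~\ref{T_exist} and \ref{T_unique}), not as the source of $J$. Your handling of the converse --- that once the torsion is absorbable the tableaux of Theorems~\ref{bisecant} and \ref{thm:3param} are involutive, yielding 2- and 3-integrability --- matches the paper.
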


\begin{proof}
2-integrability implies that $T=T_8$, so $R$, $r$, $\nabla(T)$, and $Q$
decompose as in Equation~(\ref{bianchicomps}) with the relations implied in
Theorem~\ref{bisecant}.  3-integrability implies the vanishing of the $GL(2)$
orbit of the unabsorbable portion of $\tau^{(1)}$ in
Equation~(\ref{full3int}).  The vanishing of $\tau^{(1)}$ and the Bianchi
identity ($\mathrm{d}^2 = 0$) together imply the additional relations
$S_{12}=0$, $7 S_8= 24 Q_8 = 24 \pair{T,T}_4$, $S_6 = 0$, $21S_4 = 8 Q_4 = 8
\pair{T,T}_6$, and $R^2_0=-2080 \pair{T,T}_8$.

Therefore, the curvatures, $R$ and $r$, are quadratic functions of $T$.  The
derivative of torsion, $\mathrm{d}T$, is also a quadratic function of $T$, as
expressed in the matrix $J(T)$. Hence, $T$ is the only invariant of any order
for 2,3-integrable $GL(2)$-structures.
\end{proof}

\begin{defn} 
The notation $(\mathcal{B},M,p)_{2,3}$ indicates a smooth 2,3-integrable $GL(2)$-structure
$\pi:\mathcal{B}\to M$ such that $p \in M$ and such that $M$ is connected.  
\end{defn}

The condition that $M$ is connected is crucial in what follows, and
pointedness is technically useful.

\begin{defn}[Representatives]
$(\mathcal{B},M,p)_{2,3}$ \textbf{represents} $v \in \mathcal{V}_8$ if $v \in T(\mathcal{B}_p)$.
More generally, a 2,3-integrable $GL(2)$-structure $\mathcal{B}\to M$ \textbf{represents}
$v \in \mathcal{V}_8$ if $T(\mathcal{B})\owns v$.  Likewise, a Hessian hydrodynamic PDE
\textbf{represents} $v \in \mathcal{V}_8$ if the induced 2,3-integrable
$GL(2)$-structure over $M=F^{-1}(0)$ represents $v$.  
\end{defn}

\begin{lemma} 
The singular distribution on $\mathcal{V}_8$ defined by the
columns of the matrix $J$ is integrable, providing a stratification of
$\mathcal{V}_8$ into leaves that are submanifolds.   That is, for any $v \in
\mathcal{V}_8$, there exists a unique submanifold $\mathcal{O}_J(v)$ such that
$\mathbf{T}_v \mathcal{O}_J(v) = \mathop{\mathrm{range}}J(v)$.  
\label{lemmaT}
\end{lemma}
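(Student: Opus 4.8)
The claim is essentially that the columns of $J$ generate an integrable singular distribution on $\mathcal{V}_8 \cong \mathbb{R}^9$, so that $\mathcal{V}_8$ is foliated by leaves $\mathcal{O}_J(v)$ with $\mathbf{T}_v\mathcal{O}_J(v)=\operatorname{range}J(v)$. The natural approach is to recognize that this distribution is \emph{not} an abstract gift of the Frobenius–Stefan–Suško theorem applied blindly, but rather arises from an honest group/groupoid action hidden in the structure equations of Theorem~\ref{int3-n4-str}. The plan is to exhibit $\mathcal{O}_J(v)$ as an orbit. From the last structure equation $\mathrm{d}T = J(T)(\omega,\lambda,\varphi)$, the torsion map $T:\mathcal{B}\to\mathcal{V}_8$ has differential whose image at $b$ is exactly $\operatorname{range}J(T(b))$, because $\omega,\lambda,\varphi$ together form the preferred coframe on $\mathcal{B}$ and hence span $\mathbf{T}_b^*\mathcal{B}$. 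So the first step is: \emph{the image of $T_*$ at each point is the prescribed distribution evaluated at $T(b)$}, and thus the connected components of the fibers of the map $T$ (for a maximally large $\mathcal{B}$, e.g. built by Cartan–Kähler as in Theorem~\ref{int3-n4-str}) have constant rank and $T$ is a submersion onto an immersed submanifold through each value; that submanifold is the candidate leaf.

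The cleaner and more self-contained route, which I would actually carry out, is purely algebraic/Lie-theoretic: show that the $\mathbb{R}$-span $\mathfrak{d}$ of the columns of $J$, viewed as a space of vector fields on $\mathcal{V}_8$, is closed under Lie bracket. Because the entries of $J$ are at most quadratic polynomials in the coordinates of $v$, the columns are polynomial vector fields, and one computes $[X_i,X_j]$ for the column fields $X_i$ and checks — using the explicit $J$ from Appendix~\ref{appxJ} (or the \textsc{Maple} file \cite{GL2maplecode}) — that each bracket lies again in $\operatorname{span}\{X_1,\dots,X_k\}$ \emph{pointwise with coefficients that are themselves polynomial functions on $\mathcal{V}_8$}. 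This is precisely the integrability condition $\mathrm{d}^2T=0$ re-expressed: applying exterior differentiation to $\mathrm{d}T=J(T)(\omega,\lambda,\varphi)$ and using the other structure equations of \eqref{eqn:int3-n4-str} to substitute for $\mathrm{d}\omega$, $\mathrm{d}\lambda$, $\mathrm{d}\varphi$ forces an identity of the form $\mathrm{d}(J(T))\wedge(\omega,\lambda,\varphi) + J(T)\,\mathrm{d}(\omega,\lambda,\varphi)=0$, and the "horizontal" part of this identity is exactly the statement that the bracket of columns of $J$ stays in the column span. So the key step is: \emph{deduce the Frobenius-type involutivity of $\mathfrak{d}$ from the already-established Bianchi identity $\mathrm{d}^2T=0$ of Theorem~\ref{int3-n4-str}}, rather than re-deriving it.

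With involutivity of the (possibly non-constant-rank) polynomial distribution $\mathfrak{d}$ in hand, the foliation by leaves follows from the Stefan–Suško theorem for singular integrable distributions: a smooth distribution spanned by a family of vector fields closed under bracket is integrable in the sense that through every point there passes a unique maximal integral submanifold. I would quote this (it is standard; see e.g. the references the paper already uses for EDS) and define $\mathcal{O}_J(v)$ to be that maximal leaf through $v$; uniqueness and the tangent-space identity $\mathbf{T}_v\mathcal{O}_J(v)=\operatorname{range}J(v)$ are built into the conclusion of the theorem. One should remark that the rank of $J(v)$ jumps precisely along the subvarieties of $\mathcal{V}_8$ where the binary octic $v$ acquires repeated roots — foreshadowing Theorem~\ref{classification} — so the leaves are exactly the strata, but that identification is deferred.

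\textbf{Main obstacle.} The genuine work is not conceptual but the verification that the bracket closure actually holds with the explicit $90$-ish polynomial entries of $J$; equivalently, that the exterior derivative identity $\mathrm{d}^2T=0$ holds with the specific coefficients recorded in Appendix~\ref{appxJ}. I expect to discharge this by the computer-algebra computation referenced throughout the paper (\cite{GL2maplecode}, \cite{Smith2009}), and the proof in the text will say as much: the involutivity is a finite, if large, algebraic check already implicit in Theorem~\ref{int3-n4-str}. A secondary subtlety worth a sentence is that $\operatorname{rank}J(v)$ is \emph{not} constant, so one must invoke the singular (Stefan–Suško) version of Frobenius and check that the spanning family of column vector fields is locally finitely generated and bracket-closed — both automatic here since the columns are finitely many polynomial fields.
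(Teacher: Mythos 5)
Your proposal is correct and follows essentially the same route as the paper: the bracket-closure of the columns of $J$ is deduced from the fact that the structure equations of Theorem~\ref{int3-n4-str} are closed under exterior differentiation (i.e.\ $\mathrm{d}^2T=0$), and the existence and uniqueness of the leaves with $\mathbf{T}_v\mathcal{O}_J(v)=\mathop{\mathrm{range}}J(v)$ then follows from the singular-foliation (Stefan--Sussmann) machinery. The paper merely packages this second step in Lie-algebroid/groupoid language, with $J$ as the anchor map and the leaves as orbits of the integrating groupoid, citing the same Stefan-type orbit theorem you invoke.
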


Lemma~\ref{lemmaT} is elementary in its modern interpretation using the theory of Lie algebroids and
smooth groupoids.  Since the equations in Theorem~\ref{int3-n4-str} are closed
under exterior derivative, the matrix $J$ is closed under the corresponding bracket, so it defines the anchor map
of a Lie algebroid over $\mathcal{V}_8$.  This Lie algebroid is neither
regular nor transitive, but there exists an integrating groupoid over the
base, $\mathcal{V}_8$.  This groupoid is smooth, but it may not be a Lie
groupoid. However, the groupoid is transitive when restricted to each of its
orbits, $\mathcal{O}_J(v)$, and each orbit is a submanifold of the base
\cite{Crainic2003, Mackenzie2005, Stefan1980}.  These orbits may be regarded
as the leaves of a singular foliation of $\mathcal{V}_8$ such that
$\mathbf{T}_v \mathcal{O}_J(v)$ is spanned by the columns of $J(v)$.  

\begin{cor}
For any $v \in \mathcal{V}_8$, there exists a real-analytic connected
2,3-integrable $GL(2)$-structure $\mathcal{B}\to M$ such that $T(\mathcal{B})
\owns v$.  That is, every $v \in \mathcal{V}_8$ is represented by some
real-analytic $(\mathcal{B},M,p)_{2,3}$.  Moreover, $J$ has constant rank on any
such $\mathcal{B}$.
\label{T_exist}
\end{cor}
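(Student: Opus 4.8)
The plan is to manufacture $\mathcal{B}\to M$ directly from the closed structure equations of Theorem~\ref{int3-n4-str} and then to extract the constant-rank statement from the singular foliation of Lemma~\ref{lemmaT}. The essential input is that the system~(\ref{eqn:int3-n4-str}) is \emph{formally closed}: the matrix $J$ of Appendix~\ref{appxJ} is built precisely by imposing $\mathrm{d}^2=0$ on the three structure equations, so re-differentiating each equation and substituting back returns $0=0$ as a polynomial identity in the entries of $T$ --- hence for \emph{every} $v\in\mathcal{V}_8$, not only for the $v$ realized by classical examples. Equivalently, as recalled after Lemma~\ref{lemmaT}, the coframe $(\omega,\varphi,\lambda)$ together with the function $T$ is a transitive section of a rank-$9$ Lie algebroid $\mathcal{A}=\mathcal{V}_8\times(\mathcal{V}_4\oplus\mathfrak{gl}(2))\to\mathcal{V}_8$ with anchor $J$ and bracket given by the $T$-dependent structure functions on the right of~(\ref{eqn:int3-n4-str}), formal closure being the Jacobi identity for $\mathcal{A}$.

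Fix $v\in\mathcal{V}_8$. I would produce a local realization in two interchangeable ways. Structurally, take the groupoid $\mathcal{G}\rightrightarrows\mathcal{V}_8$ integrating $\mathcal{A}$ (Lemma~\ref{lemmaT}) and let $\mathcal{B}_0:=s^{-1}(v)$; this is a $9$-manifold, its tautological Maurer--Cartan coframe satisfies~(\ref{eqn:int3-n4-str}) with $T=t|_{\mathcal{B}_0}$, and $T(1_v)=v$. To stay safely in the local analytic category and sidestep any worry about global integrability of $\mathcal{A}$, I would instead invoke Cartan's realization theorem --- a standard application of the Cartan--K\"ahler theorem to the formally closed (hence involutive) system~(\ref{eqn:int3-n4-str}) \cite{Bryant1991} --- which yields a real-analytic $9$-manifold $\mathcal{B}_0$, a coframe $(\omega,\varphi,\lambda)$ on it, and a function $T:\mathcal{B}_0\to\mathcal{V}_8$ satisfying~(\ref{eqn:int3-n4-str}) with $T=v$ at a chosen point $p_0$. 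Either way a realization exists.

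Next I would recognize $\mathcal{B}_0$ as a genuine $GL(2)$-structure. Reducing~(\ref{eqn:int3-n4-str}) modulo $\omega$ leaves $\mathrm{d}\varphi\equiv-\tfrac12\pair{\varphi,\varphi}_1$ and $\mathrm{d}\lambda\equiv 0$, which are the Maurer--Cartan equations of $\mathfrak{gl}(2)=\mathfrak{sl}(2)\oplus\mathbb{R}I$. Thus $\ker\omega$ is a rank-$4$ Frobenius-integrable distribution whose leaves carry the locally free, locally transitive $GL(2)$-action dual to $(\varphi,\lambda)$; after shrinking $\mathcal{B}_0$ to a $GL(2)$-saturated neighbourhood $\mathcal{B}$ of the leaf through $p_0$, the quotient $M:=\mathcal{B}/GL(2)$ is a smooth $5$-manifold, $\pi:\mathcal{B}\to M$ is a principal $GL(2)$-bundle with tautological form $\omega$, and the rational normal cone field of Lemma~\ref{GL2cone} makes it a $GL(2)$-structure in the sense of Definition~\ref{GL2def}. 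Since its structure equations have the form in Theorem~\ref{int3-n4-str}, that theorem certifies it is $2,3$-integrable; replacing $M$ by a connected neighbourhood of $p:=\pi(p_0)$ gives a connected real-analytic $(\mathcal{B},M,p)_{2,3}$ with $v=T(p_0)\in T(\mathcal{B}_p)$, i.e.\ one representing $v$.

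Finally, the constant-rank claim. For \emph{any} connected $2,3$-integrable $\mathcal{B}\to M$ representing $v$, the relation $\mathrm{d}T=J(T)(\omega,\lambda,\varphi)$ together with the fact that $(\omega,\lambda,\varphi)$ is a coframe gives $\mathop{\mathrm{image}}(\mathrm{d}T_b)=\mathop{\mathrm{range}}J(T(b))=\mathbf{T}_{T(b)}\mathcal{O}_J(T(b))$ at every $b\in\mathcal{B}$ (Lemma~\ref{lemmaT}). Hence $T$ has locally constant rank equal to the leaf dimension and is, near each point, a submersion onto a plaque of the singular foliation of Lemma~\ref{lemmaT}; so the leaf through $T(b)$ depends locally constantly on $b$, and since $\mathcal{B}$ is connected it is globally constant, equal to $\mathcal{O}_J(v)$. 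Therefore $\rank J(T(b))=\dim\mathcal{O}_J(v)$ for all $b$, a constant. I expect the realization step to be the only genuine obstacle --- it rests entirely on the formal closure of~(\ref{eqn:int3-n4-str}) for all $v$, which is already inside Theorem~\ref{int3-n4-str}; the passage from an abstract coframe manifold to a $GL(2)$-bundle and the shrinking to connectedness are routine.
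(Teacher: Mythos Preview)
Your proposal is correct and follows essentially the same approach as the paper: both invoke Cartan's structure/realization theorem applied to the formally closed system~(\ref{eqn:int3-n4-str}) to produce a real-analytic local realization, and both deduce constant rank from $\mathrm{d}T=J(T)(\omega,\lambda,\varphi)$ together with the leaf decomposition of Lemma~\ref{lemmaT}. You supply more detail than the paper does --- in particular the explicit recognition of the abstract coframe manifold as a principal $GL(2)$-bundle via the Frobenius integrability of $\ker\omega$, and the locally-constant-leaf argument for connectedness --- but these are elaborations of the same argument rather than a different route.
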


\begin{proof} Since the structure equations in Theorem~\ref{int3-n4-str} are
closed under exterior derivative, this is a direct application of the
existence part of Cartan's structure theorem, which is an generalization of
Lie's third fundamental theorem to the intransitive case
\cite{Cartan1904}.   See Appendix A of
\cite{Bryant2001a} for a clear summary of the special case of Cartan's
structure theorem needed here.  As used here, Cartan's structure
theorem holds in the smooth category; however, because $T \circ T$ and $J(T)$
are real-analytic (in fact, linear and quadratic), one can 
use the Cartan--K\"ahler machinery to produce a local real-analytic solution 
$\mathcal{B}$ of the structure equations.
On any solution $\mathcal{B}$, the image of $\mathrm{d}T|_b$ is the image of
the columns of the matrix $J(T(b))$, and these columns span the tangent space
of the submanifold $\mathcal{O}_J(T(b))$.  Thus, on any connected
$\mathcal{B}$ containing $b$, the rank of $J$ equals the dimension of
$\mathcal{O}_J(T(b))$.
\end{proof}

\begin{cor}  
$(\mathcal{B},M,p)_{2,3}$ and $(\hat{\mathcal{B}}, \hat{M}, \hat{p})_{2,3}$ admit a local
$GL(2)$-equivalence $f:M \to \hat{M}$ with $f(p)=\hat{p}$ if and only if
$T(\mathcal{B}_p) \cap \hat{T}(\hat{\mathcal{B}}_{\hat{p}}) \neq \emptyset$.  That
is, the value $T(b)$ uniquely defines $(\mathcal{B},M,p)_{2,3}$ in a
neighborhood of $p=\pi(b)$, and this local $GL(2)$-structure is real-analytic.
\label{T_unique}
\end{cor}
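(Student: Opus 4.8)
The plan is to run the uniqueness counterpart of Corollary~\ref{T_exist}: a Cartan-style equivalence argument built on the fact (Theorem~\ref{int3-n4-str}) that \emph{every} $2,3$-integrable $GL(2)$-structure carries a global coframe $(\omega,\varphi,\lambda)$ satisfying one and the same universal set of structure equations, with the single structure function $T$ valued in $\mathcal{V}_8$ and $\mathrm{d}T=J(T)(\omega,\lambda,\varphi)$. I would realize the desired $GL(2)$-equivalence as a Frobenius leaf on the product bundle $\mathcal{B}\times\hat{\mathcal{B}}$; equivalently, this step is subsumed by the uniqueness half of Cartan's structure theorem, the companion of the existence statement already invoked for Corollary~\ref{T_exist} (see Appendix~A of \cite{Bryant2001a} and \cite{Cartan1904}).

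For the ``if'' direction, suppose $v\in T(\mathcal{B}_p)\cap\hat{T}(\hat{\mathcal{B}}_{\hat{p}})$ and pick $b\in\mathcal{B}_p$, $\hat{b}\in\hat{\mathcal{B}}_{\hat{p}}$ with $T(b)=v=\hat{T}(\hat{b})$. By Lemma~\ref{lemmaT} and Corollary~\ref{T_exist}, $T$ and $\hat{T}$ are submersions onto the common orbit $\mathcal{O}_J(v)$ of constant dimension $\rank J$, so $\Sigma=\{(c,\hat{c}):T(c)=\hat{T}(\hat{c})\}$ is a smooth submanifold of $\mathcal{B}\times\hat{\mathcal{B}}$ of dimension $18-\rank J$ through $(b,\hat{b})$. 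On $\Sigma$, consider the Pfaffian system $\mathcal{I}$ generated by the coframe differences $\alpha=\omega-\hat{\omega}$, $\beta=\varphi-\hat{\varphi}$, $\gamma=\lambda-\hat{\lambda}$. Because the structure equations of Theorem~\ref{int3-n4-str} are universal, $\mathrm{d}\alpha,\mathrm{d}\beta,\mathrm{d}\varphi$ reduce, modulo the algebraic ideal generated by $\alpha,\beta,\gamma$, to expressions that vanish on $\Sigma$ once one uses $\mathrm{d}T=\mathrm{d}\hat{T}$ there, i.e. the $\rank J$ independent relations $J(T)(\alpha,\gamma,\beta)=0$ cutting out the conormal of $\Sigma$. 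Hence $\mathcal{I}|_\Sigma$ is Frobenius, and its maximal integral leaf $L$ through $(b,\hat{b})$ has dimension $(18-\rank J)-(9-\rank J)=9$. On $L$ the forms $\omega,\varphi,\lambda$ stay independent, so both projections $L\to\mathcal{B}$, $L\to\hat{\mathcal{B}}$ are local diffeomorphisms near $(b,\hat{b})$; composing yields a local diffeomorphism $F\colon\mathcal{B}\to\hat{\mathcal{B}}$ with $F(b)=\hat{b}$ and $F^*(\hat{\omega},\hat{\varphi},\hat{\lambda})=(\omega,\varphi,\lambda)$, hence also $F^*\hat{T}=T$. Since $F^*\hat{\omega}=\omega$ and $F^*\hat{\theta}=\theta$ with $\theta=(\varphi,\lambda)$, $F$ carries $\ker\omega$ to $\ker\hat{\omega}$ and intertwines the fundamental vector fields of the two $GL(2)$-actions (characterized by $\omega=0$, $\theta$ constant), so $F$ is $GL(2)$-equivariant and covers a local diffeomorphism $f\colon M\to\hat{M}$ with $f(p)=\hat{p}$; finally $b(\mathbf{C}_p)=\{(gx+hy)^4\}=\hat{b}(\hat{\mathbf{C}}_{\hat{p}})$ (Lemma~\ref{GL2cone}) together with $F^*\hat{\omega}=\omega$ forces $f$ to carry $\mathbf{C}$ to $\hat{\mathbf{C}}$, so $f$ is a local $GL(2)$-equivalence.

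For the ``only if'' direction and real-analyticity, suppose $f\colon M\to\hat{M}$ is a local $GL(2)$-equivalence with $f(p)=\hat{p}$. It lifts canonically to a bundle isomorphism $F\colon\mathcal{B}\to\hat{\mathcal{B}}$ with $F^*\hat{\omega}=\omega$, and since the preferred connection of Corollary~\ref{coframe} is produced by an $SL(2)$-equivariant, frame-independent absorption procedure, $F$ necessarily carries $(\hat{\varphi},\hat{\lambda})$ to $(\varphi,\lambda)$ and $\hat{T}$ to $T$. Then for any $b\in\mathcal{B}_p$ one has $T(b)=\hat{T}(F(b))\in\hat{T}(\hat{\mathcal{B}}_{\hat{p}})$, so $T(\mathcal{B}_p)\cap\hat{T}(\hat{\mathcal{B}}_{\hat{p}})\neq\emptyset$. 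For the last assertion, Corollary~\ref{T_exist} furnishes a real-analytic representative of $v=T(b)$, and by the equivalence just established $(\mathcal{B},M,p)_{2,3}$ is locally $GL(2)$-equivalent to it, so the germ of the $GL(2)$-structure at $p$ is real-analytic in suitable coordinates.

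The main obstacle is the Frobenius verification on $\Sigma$: confirming that $\mathrm{d}\alpha,\mathrm{d}\beta,\mathrm{d}\varphi$ really do lie in the ideal generated by $\alpha,\beta,\gamma$ once the constraint $T=\hat{T}$ and its differential consequence $J(T)(\alpha,\gamma,\beta)=0$ are imposed, and keeping careful track of $\rank J$ so that the integral leaf is exactly $9$-dimensional and projects diffeomorphically onto both factors. If one instead cites the uniqueness half of Cartan's structure theorem, this calculation is packaged away and the only remaining work is the descent of $F$ to the $GL(2)$-structure equivalence $f$, as above.
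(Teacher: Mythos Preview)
Your proposal is correct and takes essentially the same approach as the paper: the paper's proof is a two-sentence appeal to the local-uniqueness part of Cartan's structure theorem (the companion of the existence half used in Corollary~\ref{T_exist}), while you have unpacked that theorem into its standard graph/Frobenius argument on $\mathcal{B}\times\hat{\mathcal{B}}$. You yourself note this equivalence in your final paragraph, and your treatment of the ``only if'' direction and of real-analyticity matches the paper's reasoning. (One small typo: in the Frobenius verification you twice write $\mathrm{d}\varphi$ where you mean $\mathrm{d}\gamma$.)
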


\begin{proof} 
This is a direct application of the local uniqueness part of Cartan's
structure theorem.  Again, the theorem applies here in the smooth category,
but the real-analyticity of the structure equations implies that any
2,3-integrable $GL(2)$-structure is locally equivalent to the real-analytic
$GL(2)$-structure produced in Corollary~\ref{T_exist}.
\end{proof}

Corollaries~\ref{T_exist} and \ref{T_unique} establish the primacy of $T$ in
the study 2,3-integrable $GL(2)$-structures and the related PDEs.  However,
Corollary~\ref{T_unique} provides only local GL(2)-equivalence, so it is
useful to have a weaker ``chain-wise'' notion of equivalence that applies to
global (but still connected) $GL(2)$-structures.  

\begin{defn}[Leaf-equivalence]
$(\mathcal{B}_0,M_0,p_0)_{2,3}$ and $(\mathcal{B}_k,M_k,p_k)_{2,3}$ are said to be
\textbf{leaf-equivalent} if there exist finite sequences
$\{(\mathcal{B}_i,M_i,p_i)_{2,3}\}$ and $\{v_i\}$
with $1 \leq i \leq k{-}1$ such that $(\mathcal{B}_i,M_i,p_i)_{2,3}$ and
$(\mathcal{B}_{i+1},M_{i+1},p_{i+1})_{2,3}$ both represent $v_i$ for $0 \leq i
\leq k{-}1$. 
\end{defn}

The term ``leaf-equivalence'' arises from the leaves of the singular foliation
of $\mathcal{V}_8$ from Lemma~\ref{lemmaT}.  These leaves separate all
possible $(\mathcal{B},M,p)_{2,3}$'s into equivalence classes by the value of
$T$.  
By slight abuse of notation, if $(\mathcal{B},M,p)_{2,3}$ represents $v$, then write
$\mathcal{O}_J(\mathcal{B})$ to denote $\mathcal{O}_J(v)$.  Leaf-equivalence
can now be rewritten more succinctly.

\begin{thm}[Leaf-equivalence]
$(\mathcal{B},M,p)_{2,3}$ and $(\hat{\mathcal{B}}, \hat M, \hat p)_{2,3}$ are
leaf-equivalent if and only if $\mathcal{O}_J(\mathcal{B}) =
\mathcal{O}_J(\hat{\mathcal{B}})$.  Moreover, $T:\mathcal{B} \to
\mathcal{O}_J(\mathcal{B})$ is a submersion.  \label{leaves} \end{thm}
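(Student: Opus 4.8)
The plan is to derive the entire statement from the last of the structure equations in Theorem~\ref{int3-n4-str}, namely $\mathrm{d}T = J(T)(\omega,\lambda,\varphi)$, together with Lemma~\ref{lemmaT} and the constant-rank assertion of Corollary~\ref{T_exist}. The ``moreover'' clause is the keystone, so I would prove it first. Since $(\omega,\lambda,\varphi)$ is a coframe on the $9$-dimensional manifold $\mathcal{B}$, the map $(\omega,\lambda,\varphi)_b\colon \mathbf{T}_b\mathcal{B}\to\mathbb{R}^9$ is an isomorphism, so the image of $\mathrm{d}T_b$ is precisely the column span of the $9\times 9$ matrix $J(T(b))$, which by Lemma~\ref{lemmaT} equals $\mathbf{T}_{T(b)}\mathcal{O}_J(T(b))$. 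Thus $T$ carries every smooth curve in $\mathcal{B}$ to a curve in $\mathcal{V}_8$ everywhere tangent to the singular foliation of Lemma~\ref{lemmaT}, and such curves stay inside a single (Stefan) leaf; assuming for simplicity that $\mathcal{B}$ is connected (the general case over a connected $M$ adds only routine $\mathbb{Z}/2$ bookkeeping, since the $GL(2)$-equivariance of the structure equations makes the distribution $v\mapsto\mathrm{range}\,J(v)$ and hence the $J$-foliation $GL(2)$-invariant), the image $T(\mathcal{B})$ lies in one leaf, which makes $\mathcal{O}_J(\mathcal{B})$ well defined. Finally $\rank J(T(b)) = \dim\mathcal{O}_J(\mathcal{B})$ is constant on $\mathcal{B}$ by Corollary~\ref{T_exist}, so $\mathrm{d}T_b$ has rank equal to $\dim\mathcal{O}_J(\mathcal{B})$ at every $b$; that is, $T\colon \mathcal{B}\to\mathcal{O}_J(\mathcal{B})$ is a submersion.

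The forward implication is then formal. Given a leaf-equivalence chain $\mathcal{B}=\mathcal{B}_0,\dots,\mathcal{B}_k=\hat{\mathcal{B}}$ with bridging values $v_0,\dots,v_{k-1}$, each intermediate $\mathcal{B}_i$ with $1\le i\le k{-}1$ represents both $v_{i-1}$ and $v_i$, so by the ``moreover'' clause both lie in the single leaf $\mathcal{O}_J(\mathcal{B}_i)$, whence $\mathcal{O}_J(v_{i-1})=\mathcal{O}_J(v_i)$ by uniqueness of leaves (Lemma~\ref{lemmaT}). As $\mathcal{B}$ represents $v_0$ and $\hat{\mathcal{B}}$ represents $v_{k-1}$, concatenating these equalities gives $\mathcal{O}_J(\mathcal{B})=\mathcal{O}_J(\hat{\mathcal{B}})$.

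For the converse I would use connectedness of the leaf. Set $\mathcal{O}=\mathcal{O}_J(\mathcal{B})=\mathcal{O}_J(\hat{\mathcal{B}})$, and for each $v\in\mathcal{O}$ let $\mathcal{B}^v$ be a real-analytic $2,3$-integrable $GL(2)$-structure representing $v$, produced by Corollary~\ref{T_exist}. By the submersion property just established, $U_v:=T(\mathcal{B}^v)$ is an open neighborhood of $v$ in $\mathcal{O}$, and $\mathcal{B}^v$ represents every point of $U_v$. Call $v,w\in\mathcal{O}$ \emph{chain-linked} if there is a finite chain of $2,3$-integrable $GL(2)$-structures, consecutive members sharing a represented value, running from one that represents $v$ to one that represents $w$; this is an equivalence relation on $\mathcal{O}$, and the sets $U_w$ show that each equivalence class is open, so connectedness of $\mathcal{O}$ forces a single class. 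Hence any $v\in T(\mathcal{B}_p)$ and any $w\in\hat T(\hat{\mathcal{B}}_{\hat p})$ are chain-linked; prepending $\mathcal{B}$ (which also represents $v$) and appending $\hat{\mathcal{B}}$ (which also represents $w$) to the connecting chain yields the required leaf-equivalence.

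I expect the real content to lie entirely in the ``moreover'' clause, or more precisely in the two facts it consumes: that $T$ cannot jump between leaves — which is where the integrability in Lemma~\ref{lemmaT} and the ``tangency'' reading of $\mathrm{d}T=J(T)(\omega,\lambda,\varphi)$ are essential — and that $J$ has the constant rank needed to promote ``$\mathrm{d}T_b$ has image a tangent space of a leaf'' to ``$T$ is a submersion onto that leaf'', which is exactly Corollary~\ref{T_exist}. Once the submersion statement is in place, both directions of the equivalence are bookkeeping; the only remaining nuisance is the $\mathbb{Z}/2$ accounting for a $\mathcal{B}$ that is disconnected over a connected $M$, which is handled by the $GL(2)$-invariance of the $J$-foliation.
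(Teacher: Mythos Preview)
Your proof is correct and uses exactly the ingredients the paper intends: the structure equation $\mathrm{d}T = J(T)(\omega,\lambda,\varphi)$ from Theorem~\ref{int3-n4-str}, the integrability of the singular distribution in Lemma~\ref{lemmaT}, and the constant-rank statement of Corollary~\ref{T_exist}. The paper itself does not supply a separate proof of this theorem; it is presented as a succinct restatement of what the preceding lemma and corollaries already established, with the remark that $T$ is open but need not be surjective. Your argument simply makes explicit the connectedness reasoning (curves tangent to the Stefan foliation stay in a leaf; open equivalence classes on a connected leaf collapse to one) that the paper leaves to the reader, and you correctly flag the $\mathbb{Z}/2$ issue coming from the two components of $GL(2)$, which the paper handles implicitly via the $GL(2)$-equivariance noted earlier.
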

Although $T:\mathcal{B} \to \mathcal{O}_J(\mathcal{B})$ is an open map, it
need not be surjective.

\section{The Classification}
\label{moduli}
To classify connected 2,3-integrable $GL(2)$-structures is to explicitly
identify the leaves of the foliation of $\mathcal{V}_8$.   To identify the
leaves of the foliation is to identify connected $GL(2)$-invariant
submanifolds of $\mathcal{V}_8$ where the rank of $J(v)$ is constant. This is
all ultimately achieved in Theorem~\ref{thm:classes} thanks to the
observations in Lemmas~\ref{matrixmiracles} and \ref{root types}.

\begin{lemma}
In vector space of 14th degree polynomials in the variables $v_{-8}, \ldots,
v_{8}$, the determinant of $J(v)$ is a non-zero scalar multiple of the discriminant of the
polynomial $v\in \mathcal{V}_8$.  Therefore, $J(v)$ is non-singular if and
only if $v$ has eight distinct roots.
Moreover, if $v$ is a nontrivial polynomial with $k$ distinct roots, then the
rank of $J(v)$ is $k{+}1$.  
\label{matrixmiracles}
\end{lemma}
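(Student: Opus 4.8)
The plan is to read the shape of $J$ off the proof of Theorem~\ref{int3-n4-str}, deduce the rank statement by a finite computation organized via $SL(2)$-equivariance, and then get the determinant identity as a consequence. First I would split the nine columns of $J(v)$ into the four \emph{vertical} columns paired with $(\lambda,\varphi)\in\mathcal{V}_0\oplus\mathcal{V}_2=\mathfrak{gl}(2)$ and the five \emph{semibasic} columns paired with $\omega\in\mathcal{V}_4$. Since $\nabla=\mathrm{d}+\theta$, the vertical part of $\mathrm{d}T$ is, up to fixed nonzero constants, the infinitesimal $\mathfrak{gl}(2)$-action $\xi\mapsto\xi\cdot v$, so those four columns are linear in $v$ and span $\mathfrak{gl}(2)\cdot v\subseteq\mathcal{V}_8$. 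The semibasic part of $\mathrm{d}T$ is $\nabla T=S_4+S_8$ with $21S_4=8\pair{T,T}_6$, $7S_8=24\pair{T,T}_4$ and $S_6=S_{10}=S_{12}=0$, by the Bianchi relations in the proof of Theorem~\ref{int3-n4-str}; hence the five semibasic columns are homogeneous \emph{quadratic} in $v$, built from the covariants $\pair{v,v}_4\in\mathcal{V}_8$ and $\pair{v,v}_6\in\mathcal{V}_4$. Consequently $\det J(v)$ is homogeneous of degree $4\cdot1+5\cdot2=14$ in the coordinates $v_{-8},\dots,v_8$, the same degree as the discriminant $\mathrm{disc}(v)$ of a binary octic.

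Next I would prove $\rank J(v)=k+1$ for $v\ne 0$ with exactly $k$ distinct roots. As $\rank J(v)$ is unchanged under complexification and is an $SL(2)$-invariant, it suffices to test one octic $v_\mu=\prod_{i=1}^{k}(x-r_iy)^{m_i}$ for each of the $p(8)=22$ multiplicity profiles $\mu=(m_1\ge\cdots\ge m_k)$; over $\mathbb{C}$ these profiles are precisely the $GL(2)$-orbit types when $k\le 3$, and for $k\ge 4$ they sweep out a $(k-3)$-dimensional family of orbits, so in the latter cases one also records that $\rank J$ is locally constant in the $r_i$ — which follows from lower semicontinuity of rank together with the upper bound $\rank J(v)\le k+1$ valid on the entire (connected, over $\mathbb{C}$) type-$\mu$ stratum $\Sigma_\mu$. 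The geometric content underlying the count is that the span of the vertical and semibasic columns is exactly $\mathbf{T}_v\Sigma_\mu$, the $(k+1)$-dimensional tangent space at $v$ to the stratum of type-$\mu$ octics. The extreme case $v=\ell^8$ ($k=1$) is transparent: $\pair{v,v}_4=\pair{v,v}_6=0$, all semibasic columns vanish, and $\rank J(v)=\dim(\mathfrak{gl}(2)\cdot\ell^8)=2$. For $k\le 3$ the lower bound is automatic, because there $\mathfrak{gl}(2)\cdot v$ already has dimension $k+1$ and hence equals $\mathbf{T}_v\Sigma_\mu$; what remains — and this is the ``miracle'' of the title — is that for $k\ge 2$ the quadratic semibasic columns do not stick out of $\mathbf{T}_v\Sigma_\mu$, so that no accidental jump in rank occurs. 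I would carry this out over the twenty-one profiles with $k\ge 2$ directly from the explicit matrix $J$ of Appendix~\ref{appxJ} and \cite{GL2maplecode}.

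Finally, the determinant statement is a corollary of the rank count: $J(v)$ is nonsingular exactly when $v$ has eight distinct roots, i.e. exactly when $\mathrm{disc}(v)\ne 0$, so $\det J$ and $\mathrm{disc}$ have the same zero set; since $\mathrm{disc}$ is irreducible (so its ideal is radical) and $\det J\not\equiv 0$ (again by the rank count, $\rank J(v)=9$ for octics with distinct roots), we get $\mathrm{disc}\mid\det J$, and the common degree $14$ forces $\det J=c\,\mathrm{disc}$ with $c\ne 0$. The main obstacle in the whole argument is the computation in the second paragraph: confirming, across all the profiles with $k\ge 2$, that the semibasic columns fill out $\mathbf{T}_v\Sigma_\mu$ and never more — an identity forced by $\mathrm{d}^2=0$ but only visible once the closed form of $J$ and its structure constants are in hand.
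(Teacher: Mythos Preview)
Your proposal is largely sound and in places more conceptual than the paper's own argument, but there is one genuine gap worth naming.

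\textbf{What the paper does.} The paper's proof is pure brute force: it checks $\det J(v)=c\,\mathrm{disc}(v)$ by computing the gcd of the two degree-$14$ polynomials, and it checks the rank statement by writing $v=\prod_i(g_ix-h_iy)^{m_i}$ with symbolic $g_i,h_i$ for each multiplicity profile and computing $\rank J(v)$ directly. There is no structural argument at all.

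\textbf{Where you differ, and what it buys.} Your decomposition of the nine columns into the four $\mathfrak{gl}(2)$-action (linear) columns and the five $\nabla T$ (quadratic) columns is exactly right, and it immediately gives both the degree-$14$ homogeneity of $\det J$ and the lower bound $\rank J(v)\ge\dim(\mathfrak{gl}(2)\!\cdot\! v)$, which is already $k{+}1$ for $k\le 3$. Your derivation of the determinant statement from the rank statement --- same vanishing locus, irreducibility of $\mathrm{disc}$, equal degree --- is cleaner than the paper's gcd computation and avoids an independent symbolic check. This is a genuine improvement.

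\textbf{The gap.} For $4\le k\le 7$ your argument that $\rank J$ is \emph{constantly} $k{+}1$ on the stratum $\Sigma_\mu$ does not close. You propose: establish the upper bound $\rank J(v)\le k{+}1$ on all of $\Sigma_\mu$ (by showing the semibasic columns lie in $\mathbf{T}_v\Sigma_\mu$), check $\rank J(v_0)=k{+}1$ at a single point, and then invoke lower semicontinuity plus connectedness of $\Sigma_\mu$ over $\mathbb{C}$. But lower semicontinuity only gives that $\{\rank J=k{+}1\}$ is \emph{open} in $\Sigma_\mu$; it does not force the closed complement $\{\rank J\le k\}$ to be empty. A nonempty proper closed $GL(2)$-invariant subvariety of $\Sigma_\mu$ is not ruled out by these facts alone, and the integrability of the distribution (Lemma~\ref{lemmaT}) does not help either, since singular foliations can have lower-dimensional leaves coexisting with open ones. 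The fix is easy but unavoidable: you must do the symbolic computation with generic $r_i$ (as the paper does) and verify that some $(k{+}1)\times(k{+}1)$ minor is nonvanishing whenever the $r_i$ are pairwise distinct --- not merely at one numerical point. Once you do that, your framework stands, and your route to the determinant identity remains the nicer one.
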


\begin{proof} 
The determinant statement is verified by computing the g.c.d.\ of
the two polynomials, $\det J(v)$ and $\mathop{\mathrm{disc}}(v)$.
The rank statement is directly verified by writing
$v=(g_1x-h_1y)(g_2x-h_2y)\cdots(g_8x-h_8y)$, imposing multiplicity on the
$h_i$'s and $g_i$'s and computing the rank of $J(v)$ directly.
\end{proof}

As it happens, a simple exercise in linear fractional transformations shows
that the multiplicity of the roots is preserved by the $GL(2)$ action on
$\mathcal{V}_8$ \cite{Ahlfors1978, Conway1978}.

\begin{lemma}
For $v(x,y) \in \mathcal{V}_8$, the multiplicity and complex type of the roots
are preserved under the irreducible action of $GL(2,\mathbb{R})$.
\label{root types}
\end{lemma}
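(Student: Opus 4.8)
The plan is to exploit the classical fact that the irreducible action of $GL(2,\mathbb{R})$ on $\mathcal{V}_8$ is, up to the scaling action, exactly the action induced on binary octics by fractional linear substitution of the variables $[x:y]$. Concretely, given $a = \left(\begin{smallmatrix} \alpha & \beta \\ \gamma & \delta \end{smallmatrix}\right) \in GL(2,\mathbb{R})$ acting on the pair $(x,y)$, the induced action on a polynomial $v \in \mathcal{V}_8$ is $(a \cdot v)(x,y) = (\det a)^{c} \, v(\alpha x + \beta y, \gamma x + \delta y)$ for the appropriate power $c$ of the determinant coming from the scaling normalization $I$ in Definition~\ref{GL2def}; the determinant prefactor is a nonzero scalar and so affects neither the roots nor their multiplicities. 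Hence it suffices to track how the roots of $v$, viewed as points $[h_i : g_i] \in \mathbb{RP}^1$ (or in $\mathbb{CP}^1$ for the complex-conjugate pairs), transform under the substitution $(x,y) \mapsto (\alpha x + \beta y, \gamma x + \delta y)$.

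First I would factor $v$ over $\mathbb{C}$ as $v(x,y) = \prod_{i=1}^{8} (g_i x - h_i y)$, grouping equal factors to record multiplicities and grouping complex-conjugate pairs $(g_i x - h_i y)(\bar g_i x - \bar h_i y)$ to record the real-versus-conjugate type. Substituting $(x,y) \mapsto (\alpha x + \beta y, \gamma x + \delta y)$ sends each linear factor $g_i x - h_i y$ to $g_i(\alpha x + \beta y) - h_i(\gamma x + \delta y) = (g_i \alpha - h_i \gamma)x - (h_i \delta - g_i \beta)y$, which is again a linear form in $x,y$, and it is the zero form only if $(g_i,h_i)$ is a null vector of the invertible matrix $a^t$ (suitably arranged), which cannot happen for $(g_i,h_i) \neq 0$. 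Thus each linear factor maps to a nonzero linear factor, the total degree stays $8$, and the map on the set of roots $[h_i:g_i] \in \mathbb{CP}^1$ is precisely the Möbius transformation determined by $a$. Möbius transformations are bijections of $\mathbb{CP}^1$, so two factors coincide after substitution if and only if they coincided before; this shows the multiplicity partition of the eight roots is preserved.

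Next I would address the "complex type": since $a$ has real entries, the induced substitution commutes with complex conjugation on $\mathcal{V}_8 \otimes \mathbb{C}$, so it carries the conjugate pair $\{[h_i:g_i], [\bar h_i : \bar g_i]\}$ to another conjugate pair and carries a real root to a real root. Therefore the decomposition of the root multiset into real points and conjugate-imaginary pairs — that is, the full combinatorial datum "root factorization type of a real binary octic" that indexes the strata in Theorem~\ref{classification} — is $GL(2,\mathbb{R})$-invariant. The one point requiring a little care, and the only place where the degree-$4$ versus degree-$8$ distinction enters, is pinning down the exponent $c$ of the determinant in the formula above so that the action on $\mathcal{V}_8$ really is the one arising as $\mathrm{Sym}^2$ (or the relevant Clebsch--Gordan piece) of the structure group's action on $\mathcal{V}_4$; but since any such $c$ only contributes a nowhere-zero scalar factor, the root-type conclusion is insensitive to it, so this is a bookkeeping issue rather than a genuine obstacle. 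I expect the verification that the substitution map is exactly the intended irreducible $GL(2,\mathbb{R})$ action — matching the infinitesimal generators $X$, $Y$, $H$, $I$ of Definition~\ref{GL2def} — to be the most fiddly step, and I would dispatch it by checking the three $\mathfrak{sl}(2)$ generators act on $\prod(g_i x - h_i y)$ as first-order differential operators that move the roots along the corresponding one-parameter Möbius flows, which is a direct computation with the Leibniz rule.
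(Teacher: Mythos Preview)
Your proposal is correct and follows precisely the approach the paper indicates: the paper omits a written proof entirely, remarking just before the lemma that this is ``a simple exercise in linear fractional transformations'' and citing standard complex-analysis references (Ahlfors, Conway) for the behavior of roots under M\"obius maps. Your argument---factor over $\mathbb{C}$, observe that real linear substitution acts as a M\"obius bijection on $\mathbb{CP}^1$ preserving multiplicities, and note that real coefficients commute with conjugation so real-versus-conjugate-pair type is preserved---is exactly the intended exercise.
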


%
%
%

Bearing in mind this lemma, notational shorthand for the \textbf{root type} of
a polynomial $v(x,y) \in \mathcal{V}_8$ is useful:  Suppose $v(x,y)$ factors
as $(g_1 x - h_1 y)^{r_1}\cdots(g_m x - h_m y)^{r_m}$ such that $r_1 \geq r_2
\geq \cdots \geq r_m$ and $r_1+r_2 + \cdots + r_m = 8$.  These exponents
define a partition of $8$ that is written as $\{r_1, r_2, \ldots, r_m\}$.  If
the roots  $g_k/h_k$ and $g_{k+1}/h_{k+1}$ are complex conjugates, then denote
this by enclosing their exponents in square-braces: $\{r_1, \ldots, [r_k,
r_{k+1}], \ldots, r_m\}$.  Denote the root type containing $v(x,y)$ by
$[v(x,y)]$.  A root type is equivalent to the subset of $\mathcal{V}_8$
comprised of all polynomials that factor according to the given partition and
complex-conjugate pairing.  For example, $[x^4(x+iy)^2(x-iy)^2] =
[(x-5y)^4(x+2iy)^2(x-2iy)^2] =\{4,[2,2]\} \subset \mathcal{V}_8$.  Let $\{0\}$
denote the trivial root type, the zero polynomial.  There are 54 non-trivial
root types that partition $\mathcal{V}_8$ into strata from dimension two to
dimension nine, as represented in Figure~\ref{fig:classes-closures}.  

In Figure~\ref{fig:classes-closures}, arrows mean ``closure contains.'' Shaded
nodes represent root types that contain exactly one $GL(2)$ orbit.  Oval nodes
represent open root types. The square node represents the nearly-closed
root type, $\{8\}$, which (when $0$ is included) is the rational normal cone
in $\mathcal{V}_8$.  Hexagonal nodes represent root classes that are neither
closed nor open.  Note that strictly real root types actually have two
connected components (for example, $\{8\}$ is comprised of the two ends of the
rational normal cone), but this is another artifact of the projective nature
of the group action, as the two ends are in the same $GL(2)$ orbit by the
$-I_9$ action. 

\begin{figure}
\begin{centering}
\includegraphics[width=\textwidth]{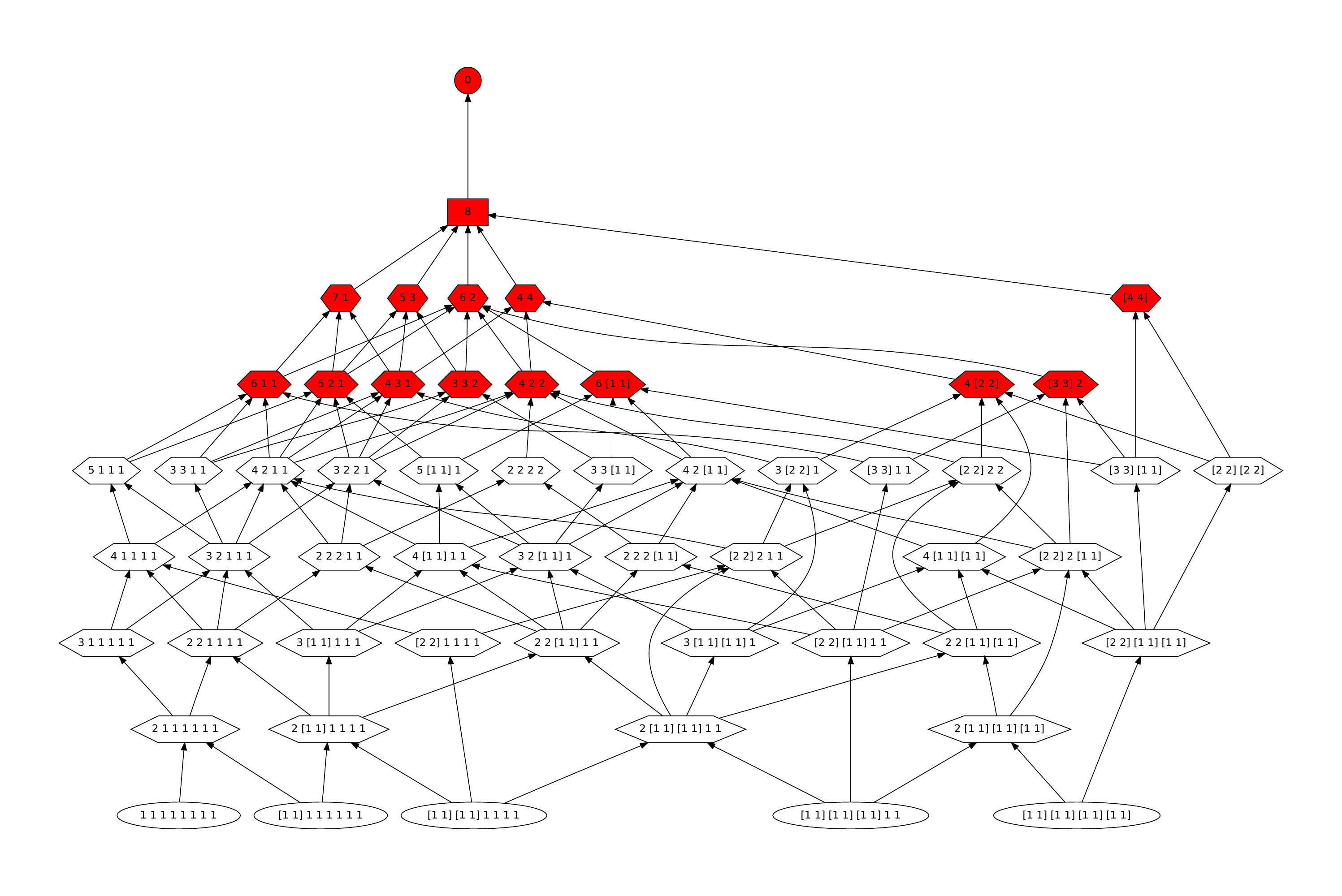}
\caption{The stratification of $\mathcal{V}_8$ into the 55 
root types, sorted by dimension.  Shaded nodes indicate
root types that contain exactly one $GL(2)$ orbit.  The square node is closed (if 0
is included); the oval nodes are open, and the hexagonal nodes are neither
closed nor open.} \label{fig:classes-closures} \end{centering}
\end{figure}

\begin{thm}[Leaf-Equivalence Classes]
\label{thm:classes}
The leaf-equivalence classes of connected 2,3-integrable $GL(2)$-structures are exactly
the root types in $\mathcal{V}_8$.
That is, for all $v \in \mathcal{V}_8$, $\mathcal{O}_{J}(v) = [v]$.
\label{classification}
\end{thm}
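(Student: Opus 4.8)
The plan is to prove the set equality $\mathcal{O}_J(v) = [v]$ for all $v \in \mathcal{V}_8$ by establishing two inclusions, both of which are controlled by the rank computation in Lemma~\ref{matrixmiracles} together with the $GL(2)$-invariance in Lemma~\ref{root types}. First I would recall the two facts that do all the work: by Lemma~\ref{matrixmiracles}, if $v$ is nonzero with exactly $k$ distinct roots, then $\rank J(v) = k+1$, and by Lemma~\ref{root types}, the root type $[v]$ (the partition of $8$ recording multiplicities, together with the complex-conjugate pairing data) is a $GL(2)$-invariant subset of $\mathcal{V}_8$. I would also note at the outset that $\mathcal{O}_J(v)$ is, by Lemma~\ref{lemmaT}, the unique leaf through $v$ of the singular foliation whose tangent space at any point $w$ is $\mathop{\mathrm{range}} J(w)$, so $\mathcal{O}_J(v)$ is a connected immersed submanifold containing $v$; the trivial case $v = 0$ is handled separately since $\{0\}$ is its own leaf ($J(0) = 0$) and its own root type.

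For the inclusion $\mathcal{O}_J(v) \subseteq [v]$: the key observation is that the columns of $J$ are precisely the generators of the infinitesimal $GL(2)$ action together with the "flow" directions coming from $\mathrm{d}\omega$ in the structure equations~(\ref{eqn:int3-n4-str}); more usefully, I would argue that the distribution spanned by the columns of $J$ is tangent to each root-type stratum. Concretely, a root type $[v]$ is a locally closed smooth submanifold of $\mathcal{V}_8$ (it is cut out by the vanishing and non-vanishing of appropriate subresultants/discriminantal conditions), and its dimension is exactly the number of free parameters in a factorization $(g_1 x - h_1 y)^{r_1}\cdots(g_m x - h_m y)^{r_m}$ modulo nothing — namely $2m$ real parameters when all roots are real and distinct as points, which one checks equals $k+1$ where $k = m$ is the number of distinct roots (real or in conjugate pairs), matching $\rank J(v)$. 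Since $\mathop{\mathrm{range}} J(w) = \mathbf{T}_w \mathcal{O}_J(w)$ has dimension $k+1 = \dim [w]$ at every $w \in [v]$, and since I will show $\mathop{\mathrm{range}} J(w) \subseteq \mathbf{T}_w [v]$ (this is where the invariance of root type under the groupoid action, inherited from Lemma~\ref{root types} at the infinitesimal level, is used — the $J$-directions do not change root multiplicities), the leaf through $v$ stays inside the stratum $[v]$, giving $\mathcal{O}_J(v) \subseteq [v]$.

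For the reverse inclusion $[v] \subseteq \mathcal{O}_J(v)$: here I would use that $[v]$ is connected (modulo the $\pm I_9$ artifact noted in the text, which identifies the two components of a strictly-real root type, so at the level of the $GL(2)$ action the relevant orbit-closure picture is connected) and that $\mathcal{O}_J(v)$ is an open subset of $[v]$ — open because $\mathcal{O}_J(v)$ and $[v]$ have the same dimension $k+1$ everywhere on $[v]$ and $\mathcal{O}_J(v)$ is an immersed submanifold of that dimension sitting inside the manifold $[v]$. An open, nonempty, (relatively) closed subset of a connected manifold is everything; the relative closedness follows because the foliation of $\mathcal{V}_8$ by the $\mathcal{O}_J$'s partitions $[v]$ into leaves all of the same dimension as $[v]$ itself, forcing each such leaf to be open in $[v]$, hence (complement of a union of opens) also closed in $[v]$. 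Therefore $[v]$ is a single leaf, i.e. $[v] = \mathcal{O}_J(v)$.

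The main obstacle I anticipate is the precise verification that $\mathop{\mathrm{range}} J(w) \subseteq \mathbf{T}_w [v]$ — that is, that the $J$-flow genuinely preserves root type and does not, say, merge or split roots. One clean way to discharge this: parametrize a neighborhood of $v$ in $[v]$ by the factorization data $(g_i, h_i)$ (finitely many real parameters, respecting the conjugate pairings), push this parametrization forward to get an explicit $(k+1)$-dimensional submanifold $S \subseteq [v]$ with $v \in S$, and check by direct (if tedious) computation — exactly the computation already invoked in the proof of Lemma~\ref{matrixmiracles}, where the rank of $J$ on the multiplicity strata was "directly verified" — that $\mathbf{T}_v S = \mathop{\mathrm{range}} J(v)$. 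Since both sides have dimension $k+1$ and the computation shows containment one way, equality follows, and then the uniqueness clause of Lemma~\ref{lemmaT} identifies $S$ (locally) with $\mathcal{O}_J(v)$; the global statement then comes from the connectedness/openness argument above. In short, everything reduces to the rank bookkeeping of Lemma~\ref{matrixmiracles} and the invariance of Lemma~\ref{root types}, with the only real content being the dimension count $\dim[v] = k+1$ and the tangency of the $J$-distribution to the strata.
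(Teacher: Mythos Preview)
Your approach is essentially the paper's: both reduce to verifying $\mathop{\mathrm{range}} J(v) = \mathbf{T}_v[v]$ by parametrizing $[v]$ via factorization data and matching the columns of $J$ against tangent vectors of the stratum, then invoking the dimension equality from Lemma~\ref{matrixmiracles}. The paper is terser on the global step---it simply asserts that two partitions of $\mathcal{V}_8$ into smooth submanifolds with identical tangent spaces at every point must coincide---whereas you spell out the openness/closedness/connectedness argument explicitly, which is a reasonable elaboration.

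One slip to fix: your dimension count claims the factorization $(g_1 x - h_1 y)^{r_1}\cdots(g_m x - h_m y)^{r_m}$ has ``$2m$ real parameters \ldots\ modulo nothing'' and that this equals $k+1 = m+1$. That is false for $m > 1$. The factorization carries an $(m{-}1)$-dimensional scaling redundancy (rescale each factor by $\lambda_i$ subject to $\prod \lambda_i^{r_i} = 1$), so the stratum dimension is $2m - (m-1) = m+1$, which is the number you want. This is only an arithmetic oversight and does not affect the rest of your argument.
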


\begin{proof}
For each $v$, both $\mathcal{O}_J(v)$ and $[v]$ are smooth submanifolds of
$\mathcal{V}_8$, and by Lemma \ref{matrixmiracles} they have the same
dimension.  To prove that $\mathcal{O}_J(v) = [v]$, it suffices to prove that
$\mathbf{T}_v \mathcal{O}_J(v) = \mathbf{T}_v [v]$ for all $v \in
\mathcal{V}_8$.

Fix a root type $[v]$ and an arbitrary $v \in [v]$.  Note that \[
\mathbf{T}_v\mathcal{O}_{J}(\mathcal{B}) = DT_b(\mathbf{T}_b \mathcal{B}) =
\mathop{\mathrm{range}}J(v).\] Thus it suffices to find, for each column
$J_i(v)$, a tangent vector $D(v) \in \mathbf{T}_v[v]$ such that $D(v) =
J_i(v)$. 
As it happens, the equations defining the various $D(v)$ are easily solvable
at arbitrary points in all 54 non-trivial root types and for all columns of
$J$.  Because both the leaf-equivalence classes and the root types partition
$\mathcal{V}_8$ by smooth submanifolds, and because
$\mathbf{T}_v\mathcal{O}_{J}(v)=\mathbf{T}_v[v]$ for all $v \in
\mathcal{V}_8$, the partitions must be identical.

To illustrate the computations, consider an arbitrary point $v=(g x + h y)^8$
in the root type $[v]=\{8\}$.  An arbitrary element of $\mathbf{T}_v[v]$ looks like 
\begin{equation}
D(v)=8(Gx + Hy)(gx+hy)^7 = 
\begin{pmatrix}
8{ G}{{ g}}^{7}\\ 
{ H}{{ g}}^{7}+7{ G}{ h}{{ g}}^{6}\\ 
2{ H}{ h}{{ g}}^{6}+6{ G}{{ h}}^{2}{{ g}}^{5}\\ 
3{ H}{{ h}}^{2}{{ g}}^{5}+5{ G}{{ g}}^{4}{{ h}}^{3}\\ 
4{ G}{{ g}}^{3}{{ h}}^{4}+4{ H}{{ g}}^{4}{{ h}}^{3}\\ 
5{ H}{{ g}}^{3}{{ h}}^{4}+3{ G}{{ h}}^{5}{{ g}}^{2}\\ 
6{ H}{{ h}}^{5}{{ g}}^{2}+2{ G}{{ h}}^{6}{ g}\\ 
7{H}{{ h}}^{6}{ g}+{ G}{{ h}}^{7}\\ 
8{ H}{{ h}}^{7}
\end{pmatrix}.
\end{equation}  Therefore, one must solve $D(v) = J_i(v)$ for $G$
and $H$ in each of the columns $i=1,\ldots,9$.  In this case, 
\begin{equation}
J( v )
=
\begin{pmatrix}
0&0&0&0&0&{g}^{8}&-16h{g}^{7}&16{g}^{8}&0\\ 
0&0&0&0&0&h{g}^{7}&-14{h}^{2}{g}^{6}&12h{g}^{7}&2{g}^{8}
\\ 
0&0&0&0&0&{h}^{2}{g}^{6}&-12{h}^{3}{g}^{5}&8{h}^{2}{g}^{6}&4h{g}^{7}
\\ 
0&0&0&0&0&{h}^{3}{g}^{5}&-10{h}^{4}{g}^{4}&4{h}^{3}{g}^{5}&6{h}^{2}{g}^{6}
\\ 
0&0&0&0&0&{h}^{4}{g}^{4}&-8{h}^{5}{g}^{3}&0&8{h}^{3}{g}^{5}\\ 
0&0&0&0&0&{h}^{5}{g}^{3}&-6{h}^{6}{g}^{2}&-4{h}^{5}{g}^{3}&10
{h}^{4}{g}^{4}\\ 
0&0&0&0&0&{h}^{6}{g}^{2}&-4{h}^{7}g&-8{h}^{6}{g}^{2}&12{h}^{5}{g_
{{1}}}^{3}\\ 
0&0&0&0&0&{h}^{7}g&-2{h}^{8}&-12{h}^{7}g&14{h}^{6}{g}^{2}\\ 
0&0&0&0&0&{h}^{8}&0&-16{h}^{8}&16{h}^{7}g
\end{pmatrix}
\end{equation}
For each column $J_i(v)$, it is easy to see the solution values of $G$ and $H$.

For the other root types, the computations are similar but somewhat more
complicated.  All that matters is the fact that they can be solved for
arbitrary $v$.  
\end{proof}

\subsection{Symmetry Reduction}
\label{symmred}
Geometrically, it is interesting to reduce all symmetry from a structure.  Fix
$(\mathcal{B},M,p)_{2,3}$ and $v \in T(\mathcal{B})$.  Let $\mathcal{B}^v = \{ b \in
\mathcal{B} : T(b) = v \}$.  This is a sub-bundle of $\mathcal{B}$ with fiber
group $\mathrm{Stab}(v) \subset GL(2)$.  If $\dim[v] \leq 3$, then the
stabilizer group is smooth; however, the stabilizer groups must be discrete
for all the larger root types.   The stabilizer group, either smooth or
discrete, must always appear in the well-known list of
$GL(2,\mathbb{C})$-stabilizers of polynomials \cite{Berchenko2000}.  In
the discrete case, the fiber must therefore be either a cyclic group or
a dihedral group \cite{Smith2009}. 

The structure equations for $\mathcal{B}^v$ show no dependence on $T$ (as it
has been fixed), so a neighborhood in $\mathcal{B}^v$ is a local Lie group of
dimension ${9-\dim[v]}$, and $M$ is locally the homogeneous space
$\mathcal{B}^v/\mathrm{Stab}(v)$.  The relations defined by
$\ker\mathrm{d}T(v)= \ker J(v)$ determine the pull-backs of $\omega$,
$\lambda$, and $\varphi$ to $\mathcal{B}^v$, so one may explicitly reduce
Equation~(\ref{eqn:int3-n4-str}) to obtain structure equations for the local
Lie group $\mathcal{B}^v$. 

If $v=0$, then $\mathrm{Stab}(v) = GL(2)$, so the local flat 2,3-integrable
$GL(2)$-structure is the local Lie group of dimension 9 obtained by setting
$T=0$ in Equation~(\ref{eqn:int3-n4-str}). 

The root type $\{8\}$ is a single $GL(2)$ orbit, so any representative
will generate all representatives. Suppose $v = x^8$.  
Then $\mathrm{d}T_{-8}(x^8) = \lambda + 16 \varphi_0$ and $\mathrm{d}T_{-6}(x^8) =
2 \varphi_2$ while $\mathrm{d}T_{k}=0$ for $k > -6$.  Since $\mathrm{d}T$ is a
vertical 1-form on $\mathcal{B}$, the value of $T$ only varies in the fiber of
$\mathcal{B}$.
The reduced structure equations are 
\begin{equation}
\begin{split}
\mathrm{d} \omega^{-4} 
& = 24\ \varphi_{0}\wedge\omega^{-4}-8\ \varphi_{-2}\wedge\omega^{-2}+
2\cdot322560\ \omega^{0} \wedge \omega^{4},\\
\mathrm{d}\omega^{-2} 
&=  20\ \varphi_{0}\wedge\omega^{-2}-6\ \varphi_{-2}\wedge\omega^{0} + 
322560\ \omega^{2} \wedge \omega^{4},\\
\mathrm{d}\omega^{0} 
&= 16\ \varphi_{0}\wedge\omega^{0} - 4\ \varphi_{-2}\wedge\omega^{2},\\
\mathrm{d}\omega^{2} 
&= 12\ \varphi_{0}\wedge\omega^{2} - 2\ \varphi_{-2}\wedge\omega^{4},\\
\mathrm{d}\omega^{4}&=  8\ \varphi_{0}\wedge\omega^{4},\\
\mathrm{d}\varphi_{0}&=0, \quad
\mathrm{d}\varphi_{-2}= 4\ \varphi_{0}\wedge\varphi_{-2}.
\end{split}
\label{eqn:x8-red}
\end{equation}
These equations can be easily integrated, so $\mathcal{B}^{x^8}$ has coordinates
$\xi^{-4}$, $\xi^{-2}$, $\xi^{0}$, $\xi^{2}$, $\xi^{4}$, $a$, and $b$ such
that
\begin{equation}
\begin{split}
\varphi_{0}&=a^{-1}\ \mathrm{d}a\\
\varphi_{-2}&=a^4\ \mathrm{d}b,\\
\omega^{4}&=a^8\ \mathrm{d}\xi^{4},\\
\omega^{2}&=a^{12}\left(\mathrm{d}\xi^{2}-2b\ \mathrm{d}\xi^{4}\right),\\
\omega^{0}&=a^{16}\left(\mathrm{d}\xi^{0}-4b\ \mathrm{d}\xi^{2} + 4b^2\ \mathrm{d}\xi^{4}\right),\\
\omega^{-2}&=a^{20}\left(\mathrm{d}\xi^{-2}-6b\ \mathrm{d}\xi^{0}+12b^2\
\mathrm{d}\xi^{2}-8 b^3\ \mathrm{d}\xi^{4}-322560\xi^{4}\ \mathrm{d}\xi^{2}\right),\\
\omega^{-4}&=a^{24}\big(\mathrm{d}\xi^{-4}-8b\ \mathrm{d}\xi^{-2}+24b^2\
\mathrm{d}\xi^{0}-32b^3\ \mathrm{d}\xi^{2}+ 16b^4\ \mathrm{d}\xi^{4}+\\ 
&\qquad 8(322560) \xi^{4}b\ \mathrm{d}\xi^{2}-2(322560)\xi^{4}\ \mathrm{d}\xi^{0}\big).
\end{split}
\label{8int}
\end{equation}
The field of rational normal cones can now be written in these local
coordinates, since $\mathbf{C}_p = \{ \pair{u_p(v),u_p(v)}_2~:~ v \in
\mathbf{T}_pM  \}$ for a local section $u$ of $\mathcal{B}$.

Each of the 3-dimensional root types is a single $GL(2)$-orbit.  Again,
$T(\mathcal{B}) = [v]$ for any $v$ in the root type, so an arbitrary
representative $v$ may be chosen for any $(\mathcal{B},M,p)_{2,3}$.  Each $v$ has a
1-dimensional stabilizer, which is a Lie subgroup of $GL(2)$.   The
corresponding Lie algebras are easy to compute \cite{Berchenko2000,
Smith2009}. In all cases, $\mathrm{d}T$ has both vertical and semi-basic
components, so the embedding of the stabilizer fiber group varies over $M$.
The reduced structure $\mathcal{B}^v$ is a $6$-dimensional Lie group.

If $\dim[v]=4$, then $\mathcal{B}^v$ is a finite cover of $M$, but there are
only eight such structures, since each root type with three roots is a single
$GL(2)$-orbit.  This contrasts with the case $\dim[v] \geq 5$.  If $\dim [v]
\geq 5$, then $\mathcal{B}^v$ is a finite cover of $M$, but there is no reason
to believe that $T(\mathcal{B}) \subset [v]$ implies $T(\mathcal{B})=[v]$ when
$\dim [v] \geq 5$.  Consider $[v]=\{2, 2, 2, 2\}$.  The quotient space
$\{2,2,2,2\}/GL(2)$ has orbifold singularities; for example, the point
$x^2y^2(x+y)^2(x-y)^2$ has an eight-element stabilizer group, but all nearby
points have trivial stabilizer groups.  The existence of these orbifold
singularities implies that there cannot be a surjective smooth map $M \to
[v]/GL(2)$.  If $T(\mathcal{B})$ is a proper subset of $[v]$ for all
$\mathcal{B}$ representing $v$, then by Theorem~\ref{classification} a
finite sequence of 2,3-integrable $GL(2)$-structures connects any two points in
the leaf, but infinitely many locally distinct $GL(2)$-structures are required to
cover the entire leaf.  This behavior is closely related to the topology of
$\mathbb{CP}^n$ with $k$ marked points, up to $GL(2,\mathbb{C})$ action, which
is a difficult and well-known problem in complex algebraic geometry.  The real
case encountered here is both harder and less well-known than the classical
complex case.  The most relevant result is \cite{Chu2007}, which shows that the
orbifold singularities present in the larger leaves of $\mathcal{V}_8$ are so
bad that they preclude the existence of Riemannian metrics on the leaves.

In the open case, $\dim[v]=9$, so $J(T)$ has maximum rank, and $T$ itself
provides local coordinates on $\mathcal{B}$.  

This discussion is summarized by the shape of the nodes in
Figure~\ref{fig:classes-equiv}.

\section{GL(2) PDEs}
\label{PDEs}

This section contains various conclusions regarding Hessian hydrodynamic PDEs
that can be inferred from Theorem~\ref{thm:F23int} and
Theorem~\ref{classification}.
Recall that a PDE ${F=0}$ is said to have $k$ symmetries if the Lie algebra of
point symmetries of $F^{-1}(0)$ has dimension $k$. 

\begin{lemma}
A Hessian hydrodynamic PDE $F(u_{ij})=0$ representing $v \in \mathcal{V}_8$
has $k$ symmetries if and only if $\dim [v]=9-k$.
\end{lemma}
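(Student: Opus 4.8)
The plan is to restate the assertion in terms of the $2,3$-integrable $GL(2)$-structure $\pi\colon\mathcal{B}\to M$ induced by the PDE (so $M=F^{-1}(0)$ and $v\in\mathcal{V}_8$ is the value of its torsion), and then to read off the dimension of its symmetry algebra directly from the structure equations of Theorem~\ref{int3-n4-str}.

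First I would identify the Lie algebra of point symmetries of $F(u_{ij})=0$ with the Lie algebra of infinitesimal automorphisms of the $GL(2)$-structure $\mathcal{B}\to M$. One inclusion is straightforward: any point symmetry prolongs to a contact transformation whose induced action on the fibre $\Lambda=\mathbb{J}^2/\mathbb{J}^1$ is by an element of $CSp(3)$ as in Equation~(\ref{Sp3actionU}), and $CSp(3)$ preserves the Veronese cone, hence the field $\mathbf{C}$ of rational normal cones on $M$, so it induces a $GL(2)$-structure automorphism. For the reverse inclusion I would invoke the equivalence of Theorem~\ref{embed} together with B\"acklund's theorem to recover a point symmetry of the reconstructed PDE from an automorphism of $(M,\mathbf{C})$. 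I expect this reverse direction --- verifying that the reconstruction is natural enough that structure automorphisms descend to genuine point (not merely contact) symmetries --- to be the main obstacle.

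Next, since the preferred connection $\theta=(\varphi,\lambda)$ of Corollary~\ref{coframe} is canonically attached to the $GL(2)$-structure, an infinitesimal automorphism of $\mathcal{B}$ is exactly a vector field on the $9$-manifold $\mathcal{B}$ that preserves the global coframe $(\omega,\varphi,\lambda)$. By Theorem~\ref{int3-n4-str} every structure function of this coframe is a polynomial in the nine components of $T$ and $\mathrm{d}T=J(T)(\omega,\lambda,\varphi)$, so the entire collection of structure functions reduces to the single map $b\mapsto T(b)$, whose differential has rank $\rank J(T(b))$ at every point. By Corollary~\ref{T_exist} this rank is constant on the connected manifold $\mathcal{B}$, and by Lemma~\ref{matrixmiracles} together with Theorem~\ref{classification} it equals $\dim\mathcal{O}_J(v)=\dim[v]$.

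Finally I would combine the existence and uniqueness halves of Cartan's structure theorem, exactly as in Corollaries~\ref{T_exist} and \ref{T_unique}: a coframe-preserving self-map of $\mathcal{B}$ fixing one point is locally the identity, so the symmetry pseudogroup acts locally freely; and by the uniqueness statement it acts transitively (pseudogroup-wise) on each fibre $T^{-1}(v)=\mathcal{B}^v$, which has dimension $9-\dim[v]$. Hence the symmetry algebra has dimension $9-\dim[v]$ --- the local Lie group structure on $\mathcal{B}^v$ noted in Section~\ref{symmred}, with $M=\mathcal{B}^v/\mathrm{Stab}(v)$, exhibits it concretely. Combined with the first step, the PDE representing $v$ has $k=9-\dim[v]$ point symmetries, i.e.\ $\dim[v]=9-k$. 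As a consistency check, $v=0$ yields the $9$-dimensional flat $GL(2)$-structure of Section~\ref{symmred}, while a $v$ with eight distinct roots has $\dim[v]=9$ and no continuous symmetries, matching the discrete stabilizers there.
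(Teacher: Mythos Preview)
Your proposal is correct and follows essentially the same route as the paper: identify the symmetries of the Hessian hydrodynamic PDE with the $CSp(3)$-induced automorphisms of the $GL(2)$-structure, observe that an automorphism must preserve the canonical coframe and hence $T$, and count dimensions via $\rank J(T)=\dim[v]$ to get a symmetry algebra of dimension $9-\dim[v]$. The paper's proof is a terse four-line version of your argument; you are more careful about the two directions of the symmetry correspondence and invoke Cartan's structure theorem explicitly for the transitivity on $T^{-1}(v)$, but the core idea is identical.
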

\begin{proof}
The contact transformations by $CSp(3)$ induce the local automorphisms on
$\mathcal{B}$, but an automorphism $\psi:\mathcal{B} \to \mathcal{B}$ near
$b$ is a symmetry if and only if the structure equations are preserved by
$\psi^*$.  This can happen if and only if $T(b)$ is preserved near
$\psi(b)$.  In particular,  $\psi^* ( \mathrm{d}T )=\mathrm{d}T$ if and
only if $\psi^*$ is the identity on the range of $\mathrm{d}T$, which has
dimension $\dim[v]$.
\end{proof}

\begin{cor}
There is no Hessian hydrodynamic PDE with
exactly eight symmetries.
\label{PDEnone}
\end{cor}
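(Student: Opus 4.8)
The plan is to reduce everything to the dimension count that is implicit in Figure~\ref{fig:classes-closures} and Lemma~\ref{matrixmiracles}. By the preceding lemma, a Hessian hydrodynamic PDE representing $v \in \mathcal{V}_8$ has exactly eight symmetries precisely when $\dim[v] = 9 - 8 = 1$, so it suffices to show that no root type in $\mathcal{V}_8$ is one-dimensional.

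First I would record the dimension of an arbitrary root type. By Corollary~\ref{T_exist} and Theorem~\ref{classification}, $\dim[v] = \rank J(v)$, and Lemma~\ref{matrixmiracles} identifies this rank as $k{+}1$, where $k$ is the number of distinct roots of $v$ in $\mathbb{CP}^1$. Equivalently, parametrizing the stratum directly: a polynomial $v = c\prod (g_i x - h_i y)^{r_i}$ in a fixed root type is determined by its overall scalar $c$ (one real parameter), the positions of its distinct real roots in $\mathbb{RP}^1$ (one parameter each), and the positions of its distinct complex-conjugate root pairs in the upper half-plane (two parameters each), so $\dim[v] = 1 + a + 2b = 1 + k$, where $a$ is the number of distinct real roots, $b$ the number of distinct conjugate pairs, and $k = a + 2b$.

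Next comes the key observation: $\dim[v] = 1$ forces $k = 0$, i.e.\ $v$ has no roots in $\mathbb{CP}^1$ at all. But every nonzero element of $\mathcal{V}_8$ is a nonzero binary octic and therefore has eight roots in $\mathbb{CP}^1$ counted with multiplicity; hence $k = 0$ only for $v = 0$, and the trivial root type $\{0\}$ has dimension $0$, not $1$. Thus no root type is one-dimensional: the possible values of $\dim[v]$ are exactly $\{0\} \cup \{2, 3, \ldots, 9\}$, as already noted in the discussion of Figure~\ref{fig:classes-closures}. Consequently $9 - \dim[v]$ never equals $8$, and by the preceding lemma no Hessian hydrodynamic PDE has exactly eight symmetries.

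The argument is entirely a dimension count, so there is no real obstacle; the only point needing a word of justification is the dimension formula $\dim[v] = k{+}1$, and this is supplied twice over --- once by the rank computation in Lemma~\ref{matrixmiracles} together with Theorem~\ref{classification}, and once by the elementary parametrization of a stratum by scalar, real roots, and conjugate pairs.
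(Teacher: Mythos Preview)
Your proof is correct and follows the same approach as the paper: the paper's proof is the single sentence ``There is no root type of dimension one,'' and you have simply supplied the justification for that assertion via the formula $\dim[v]=k+1$.
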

\begin{proof}
There is no root type of dimension one. 
\end{proof}

The classification provides even more bountiful information about the Hessian
hydrodynamic PDEs, because Theorem~\ref{int3-n4-str} allows a converse of
Theorem~\ref{thm:F23int}.  

\begin{thm}
Every $(\mathcal{B},M,p)_{2,3}$ is realized, locally near $p$, by a Hessian hydrodynamic PDE.
\label{embed}
\end{thm}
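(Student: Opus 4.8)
The plan is to reverse the construction of Lemma~\ref{hyperbolics} and Theorem~\ref{thm:F23int}: given a $2,3$-integrable $GL(2)$-structure $\mathcal{B}\to M^5$, exhibit a local embedding of $M$ into the Lagrangian Grassmannian $\Lambda^o$ as the zero locus of a function $F(u_{ij})$, in such a way that the induced distribution of rational normal cones on $M$ (cut out by intersecting the Veronese cone with the hyperplane field $\ker\mathrm{d}F$) matches the distribution $\mathbf{C}$ carried by $\mathcal{B}$. Once $M\hookrightarrow\Lambda^o$ is constructed compatibly with cones, Theorem~\ref{thm:F23int} runs backwards: the $GL(2)$-structure so obtained is $2$-integrable automatically and $3$-integrable by hypothesis, hence the PDE $F=0$ is integrable via $3$-parameter hydrodynamic reductions, i.e. Hessian hydrodynamic.

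Concretely, I would work on a total space $\mathcal{B}\times CSp(3)^o$ (or, better, a suitable reduction of it) and set up a coframing adapted both to the $GL(2)$ structure equations of Theorem~\ref{int3-n4-str} and to the tautological forms $\mathrm{d}\xi,\mathrm{d}p,\mathrm{d}z$ of the contact system. The key observation is that $CSp(3)$ is, by the cited result of Cartan, exactly the coframe bundle of $\Lambda$ acting irreducibly on $\mathbf{T}_U\Lambda\simeq\mathrm{Sym}^2(\mathbb{R}^3)$ by the irreducible $GL(2)$ action on the coefficients of a quadric; since the $GL(2)$ inside $GL(\mathcal{V}_4)=GL(5,\mathbb{R})$ is the very same irreducible $GL(2)$, the tautological form $\omega$ of $\mathcal{B}$ (valued in $\mathcal{V}_4\cong\mathrm{Sym}^2(\mathbb{R}^3)_0$, the traceless part, or rather the full $\mathrm{Sym}^2$ modulo the conformal factor) can be identified with the restriction to $M$ of the Lagrangian-Grassmannian tautological form. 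So I would first pin down, purely representation-theoretically, the equivariant isomorphism between the $SL(2)$-module $\mathcal{V}_4$ appearing as the fibre of $\omega$ and the $SL(2)$-module $\mathrm{Sym}^2(\mathbb{R}^3)$ that is the tangent model of $\Lambda$, and check that the rational-normal-cone distribution corresponds to the Veronese cone.

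Then the construction is an application of Cartan's technique of ``prolongation to a Lie-group-valued coframe and integration.'' Using the structure equations (\ref{eqn:int3-n4-str}) — whose right-hand sides are closed — together with the known structure equations of $CSp(3)^o$ acting on $\Lambda^o$, one assembles a coframe on a product manifold satisfying a closed differential system; Cartan's structure theorem (as in Corollaries~\ref{T_exist} and \ref{T_unique}, invoked with the $CSp(3)$ symmetry adjoined) produces a local map $M\to\Lambda^o$ which is an immersion onto a $5$-dimensional submanifold, carrying $\mathbf{C}$ to the Veronese-cut cone field. Concretely one can use the explicit integrated coframes already written down in \S\ref{symmred} (e.g. Equations~(\ref{8int})) for orbit representatives and transport along leaves via Theorem~\ref{leaves}; the Maple file \cite{GL2maplecode} supplies the Maurer--Cartan form that makes this effective, and this is presumably exactly what the phrase ``as in Theorem~\ref{embed}'' in the introduction refers to. The function $F$ is then recovered as the (local) defining equation of the image hypersurface in $\Lambda^o$, and its hyperbolicity is forced by the fact that the cone field on $M$ is a genuine rational normal cone field, so the leading symbol $(a_{ij})$ is nonsingular of split signature (Lemma~\ref{hyperbolics}).

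The main obstacle I anticipate is the compatibility bookkeeping: one must verify that the $9$-dimensional bundle $\mathcal{B}$, together with the $CSp(3)$ gauge freedom, really does contain enough room to realize an arbitrary torsion value $T\in\mathcal{V}_8$ as the leading data of some $F(u_{ij})=0$, i.e. that the map from $\mathcal{V}_8$-torsions to (jets of) Hessian hydrodynamic PDEs modulo contact equivalence is onto. Equivalently, one must check that no integrability obstruction survives when the structure equations of $\mathcal{B}$ are adjoined to those of $\Lambda^o$ — that the combined exterior differential system is still involutive (or Frobenius after the appropriate prolongation). Granting the explicit computations recorded in \cite{Smith2009, GL2maplecode}, this reduces to confirming that the $\mathrm{d}T=J(T)(\omega,\lambda,\varphi)$ equation is consistent with the pullback of the $CSp(3)$ Maurer--Cartan equations, which is a finite check at the level of $2$-forms. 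I would therefore present the argument as: (i) identify the modules and cones; (ii) adjoin structure equations and verify closedness/involutivity; (iii) integrate by Cartan's theorem to get $M\hookrightarrow\Lambda^o$; (iv) read off $F$ and note hyperbolicity; (v) invoke Theorem~\ref{thm:F23int} in reverse to conclude $F$ is Hessian hydrodynamic.
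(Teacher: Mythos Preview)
Your proposal is essentially correct and follows the same approach as the paper: build a $\mathfrak{csp}(3)$-valued $1$-form on $\mathcal{B}$ whose Maurer--Cartan equation closes, then integrate to obtain a bundle map $\mathcal{B}\to CSp(3)^o$ covering an embedding $M\hookrightarrow\Lambda^o$. The paper's execution is slightly more direct than your product-manifold framing: rather than adjoining structure equations on $\mathcal{B}\times CSp(3)^o$ and invoking Cartan's structure theorem, it simply writes down an explicit ansatz $\eta=\left(\begin{smallmatrix}\beta&\gamma\\\alpha&-\beta^t\end{smallmatrix}\right)$ on $\mathcal{B}$ with $\alpha$ the symmetric matrix of $\omega^a$'s and $\beta$ built from $\varphi,\lambda$ plus an undetermined semi-basic piece $\beta_T(\omega)$, then solves the flatness equation $\mathrm{d}\eta+\eta\wedge\eta=0$ for $\beta_T$ and $\gamma$ and applies the Fundamental Lemma of Lie groups. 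Your step (ii) ``verify closedness'' is exactly this solvability check, and your remark that it ``reduces to a finite check at the level of $2$-forms'' is correct; the paper does not go case-by-case along leaves as you also suggest, but handles all torsion values uniformly via this single computation.
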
 

\begin{proof}
The structure $(\mathcal{B},M,p)_{2,3}$ is realized, locally near $p$, by a
Hessian hydrodynamic PDE as in Lemma~\ref{hyperbolics} if and only if there
exists a neighborhood $M' \subset M$ of $p$ and an embedding $i:M' \to
\Lambda^o$ such that the distribution of rational normal cones
$i_*(\mathbf{C}(M'))$ over $i(M')$ is the same as the intersection of
$i_*(\mathbf{T}M')$ with the distribution of Veronese cones over $\Lambda^o$. 

Since the fibers $\mathcal{B}$ are exactly the symmetries of $\mathbf{C}$ and
the fibers of $CSp(3)^o$ are exactly the symmetries of the distribution of
Veronese cones, it suffices to establish a bundle immersion $h:\mathcal{B}(M')
\to CSp(3)^o$ covering an embedding $i:M' \to \Lambda^o$. 

Let $\mu$ denote the Maurer--Cartan form of $CSp(3)$, which is of the form 
\begin{equation}
\mu = 
\begin{pmatrix}
\beta & \gamma \\
\alpha & -\beta^t \\
\end{pmatrix},\ 
\alpha=\alpha^{t},\ \gamma=\gamma^{t},\ \mathrm{d}\mu + \mu\wedge\mu = 0.
\end{equation}
(One may assume that the conformal scaling has been incorporated into $\beta$,
as it is below.)
By Equations~(\ref{Sp3Fiber}) and (\ref{Sp3U}), the components $\beta$ and
$\gamma$ are vertical for $\Pi$, and $\alpha$ is semi-basic.

Recall the Fundamental Lemma of Lie Groups \cite[Theorem 1.6.10]{Ivey2003}:
If there exists $\eta:\mathbf{T}\mathcal{B}\to \mathfrak{csp}(3)$ such that
$\mathrm{d}\eta+\eta\wedge\eta =0$, then for any $b \in \mathcal{B}$, there exists a
neighborhood $\mathcal{B}'$ of $b$ and a map $h:\mathcal{B}' \to CSp(3)$ such that $h^*(\mu)=\eta$.
Moreover, if $h^*(\alpha)$ is semi-basic, then the fibers of $\mathcal{B}'$ immerse into
the fibers of $CSp(3)$. 

Therefore, it suffices to construct a $\mathfrak{csp}(3)$-valued
Maurer--Cartan form $\eta$ on $\mathcal{B}$ such that the entries of the
lower-left symmetric submatrix of $\eta$, namely $h^*(\alpha_{ij})$, are semi-basic on
$\mathcal{B}$.  For brevity, the pull-backs $h^*(\cdot)$ are dropped from the
notation henceforth.

The condition that $\alpha$ is semi-basic is that $\alpha_{ij}=A_{ija}\omega^a$. 
Note that $\alpha: \mathcal{V}_4 \to \mathrm{Sym}^2(\mathbb{R}^3) =
\mathrm{Sym}^2(\mathcal{V}_2)$, and recall that for $u, v \in \mathcal{V}_2$ the symmetric tensor
is given by 
\begin{equation} 
\begin{pmatrix}
u^{-2} \\ u^{0} \\ v^{2} 
\end{pmatrix}
\circ
\begin{pmatrix}
v^{-2} \\ v^{0} \\ v^{2} 
\end{pmatrix}
=
\frac12
\begin{pmatrix}
u^{-2}v^{-2} + v^{-2}u^{-2} &
u^{-2}v^{ 0} + v^{-2}u^{ 0} &
u^{-2}v^{ 2} + v^{-2}u^{ 2} \\
u^{ 0}v^{-2} + v^{ 0}u^{-2} &
u^{ 0}v^{ 0} + v^{ 0}u^{ 0} &
u^{ 0}v^{ 2} + v^{ 0}u^{ 2} \\
u^{ 2}v^{-2} + v^{ 2}u^{-2} &
u^{ 2}v^{ 0} + v^{ 2}u^{ 0} &
u^{ 2}v^{ 2} + v^{ 2}u^{ 2} 
\end{pmatrix}.
\end{equation}
Therefore, to respect the weights in the $GL(2)$ representation, 
$\alpha$ must have the following form for
constants $A_{-4}$, $A_{-2}$, $A_{0}$, $A'_{0}$, $A_{2}$, and $A_{4}$:
\begin{equation}
\alpha = \begin{pmatrix}
A_{-4}\ \omega^{-4} & A_{-2}\ \omega^{-2} & A_{ 0}\ \omega^{ 0}\\
A_{-2}\ \omega^{-2} &A'_{ 0}\ \omega^{ 0} & A_{ 2}\ \omega^{ 2}\\
A_{ 0}\ \omega^{ 0} & A_{ 0}\ \omega^{ 2} & A_{ 4}\ \omega^{ 4}
\end{pmatrix}.
\end{equation}

Writing $\beta = \beta_\varphi(\varphi) + \beta_\lambda(\lambda) +
\beta_T(\omega)$, it is apparent that $\beta_\varphi$ must be the
representation $\mathfrak{sl}(2) \to M_{3 \times 3}(\mathbb{R})$ such that the
natural action of $M_{3 \times 3}(\mathbb{R})$ on
$\mathrm{Sym}^2(\mathcal{V}_2)$ is induced by the natural action of
$\mathfrak{sl}(2)$ on $\omega\in\mathcal{V}_4$.  One can now easily verify
the following formulas:
\begin{equation}
\begin{split}
\alpha &= \begin{pmatrix}
\omega^{-4} &\omega^{-2} &\omega^{ 0}\\
\omega^{-2} &\omega^{ 0} &\omega^{ 2}\\
\omega^{ 0} &\omega^{ 2} &\omega^{ 4}
\end{pmatrix}, \\
\beta &=  \begin{pmatrix}
4 \varphi_{0} & 2 \varphi_{2} & 0 \\
-4 \varphi_{-2} & 0 & 4\varphi_{2} \\
0 & -2 \varphi_{-2} & -4\varphi_{0}
\end{pmatrix}
-\frac12 \lambda I_3 + \beta_T(\omega).
\end{split}
\label{alpha beta}
\end{equation}
Here, the $-\frac12\lambda I_3$ component of $\beta$ is simply the scaling action of
$GL(2)$ as represented by the scaling action in $CSp(3)$.  If $PGL(2)$ and
$Sp(3)$ were used instead, it would not appear.

All that remains is to find $\beta_T$ and $\gamma$ such that $\mathrm{d}\eta +
\eta\wedge\eta=0$.  This is arithmetic, and solutions exist.  The
simplest $\eta$ (the one where the undetermined coefficients in $\beta_T$
are set to 0) is provided in \cite{GL2maplecode}.  
\end{proof}

Theorem~\ref{embed} allows explicit construction of several of the
most-symmetric Hessian hydrodynamic PDEs.  Each of the root types with three
or fewer roots is itself closed under $GL(2)$; hence, the torsion of any
$GL(2)$-structure covers the entire root type.  This allows a first step at
constructing all Hessian hydrodynamic PDEs.  However,
Theorem~\ref{embed} does not say that there are only 55 Hessian hydrodynamic
PDEs up to $CSp(3)$ actions.  There is no reason to believe that the torsion
map $T:\mathcal{B} \to \mathcal{V}_8$ is surjective for the larger root types.
When $T$ fails to be surjective, there cannot be a single PDE that represents
all possible torsions in its root type.  Nonetheless, the leaves of
dimension at most $4$ can be used to fully describe the most symmetric Hessian
hydrodynamic PDEs, as in the following theorems.

\begin{thm}
The root type $\{0\}$ is represented by the wave equation, 
\begin{equation}u_{22}=u_{13}\end{equation} and
this representation is unique up to $CSp(3)$.  Therefore,
the wave equation is the unique Hessian hydrodynamic PDE 
with nine symmetries.  
\label{PDE0} 
\end{thm}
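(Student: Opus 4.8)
The plan is to identify root type $\{0\}$ with the flat ($T\equiv0$) $GL(2)$-structure, to check that the wave equation induces exactly this structure, and to see that the resulting PDE is unique up to $CSp(3)$. The assertion about nine symmetries is then immediate from the symmetry lemma preceding Corollary~\ref{PDEnone} applied to $v=0$: since $\dim\{0\}=0$ we get $k=9$, and conversely a Hessian hydrodynamic PDE with nine symmetries represents some $v$ with $\dim[v]=0$, and the only such root type is $\{0\}$ (all nonzero root types have dimension at least two), so $v=0$.

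First I would record that, by Theorem~\ref{int3-n4-str}, a structure $(\mathcal{B},M,p)_{2,3}$ represents $0\in\mathcal{V}_8$ precisely when $T\equiv0$, in which case its structure equations reduce to $\mathrm{d}\omega=-\pair{\varphi,\omega}_1-\pair{\lambda,\omega}_0$, $\mathrm{d}\lambda=0$, $\mathrm{d}\varphi=-\tfrac12\pair{\varphi,\varphi}_1$ --- the Maurer--Cartan equations of $G=GL(2)\ltimes\mathcal{V}_4$ --- so by Corollary~\ref{T_unique} every such structure is locally $GL(2)$-equivalent to an open subset of $G$, equivalently to an open subset of $\mathcal{V}_4$ carrying the translation-invariant distribution of standard rational normal cones. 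Next I would compute the $GL(2)$-structure of the wave equation directly. For $F=u_{22}-u_{13}$ the leading symbol is the covector with $a_{22}=1$, $a_{13}=-1$, all other $a_{ij}$ zero, which is a nondegenerate symmetric matrix of split signature; hence $F^{-1}(0)$ is hyperbolic and Lemma~\ref{hyperbolics} produces a $GL(2)$-structure on $M=F^{-1}(0)\cap\Lambda^o$. The key observation is that, in the binomial coordinates, this covector is (up to scale) the pairing $\pair{\cdot,\cdot}_2$, the unique $GL(2)$-invariant element of $\mathrm{Sym}^2(\mathcal{V}_2^*)=\mathcal{V}_4^*\oplus\mathcal{V}_0$; consequently the tangent hyperplane $W=\ker\mathrm{d}F\subset\mathrm{Sym}^2(\mathcal{V}_2)$, which is constant because $F$ is affine, is exactly the $GL(2)$-invariant summand isomorphic to $\mathcal{V}_4$, and the intersection of the (constant) Veronese cone with $W$ is the degree-$4$ rational normal cone. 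Thus $M$ is an open subset of the affine hyperplane $W\cong\mathcal{V}_4$ with the translation-invariant standard-cone distribution, so its adapted coframe bundle is an open subset of $G$ with $(\omega,\theta)$ the Maurer--Cartan form, and therefore $T\equiv0$. By Theorem~\ref{int3-n4-str} this $\mathcal{B}$ is $2,3$-integrable, so by Theorem~\ref{thm:F23int} the wave equation is a Hessian hydrodynamic PDE, and it represents $0$; that is, it lies in root type $\{0\}$.

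For uniqueness, let $\hat F$ be any Hessian hydrodynamic PDE representing $\{0\}$; its induced structure has $T\equiv0$, so by Corollary~\ref{T_unique} it is locally $GL(2)$-equivalent to the flat model, and it remains only to upgrade this to a contact equivalence. By the symmetry lemma the local symmetry group $\Gamma\subset CSp(3)$ of $\hat F$ is $9$-dimensional and acts on $\hat M=\hat F^{-1}(0)\subset\Lambda^o$ with open orbit; its identity component is locally $GL(2)\ltimes\mathcal{V}_4$, whose normal abelian factor must be realized inside $CSp(3)$ as the $5$-parameter translation group $U\mapsto U+A$ with $A$ ranging over a hyperplane $W'\subset\mathrm{Sym}^2(\mathbb{R}^3)$, while the $GL(2)$ point-stabilizer forces $W'$ to be the $GL(2)$-invariant hyperplane. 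Hence $\hat M$ is open in an affine translate of $W'$, so $\hat F$ is affine with the invariant symbol, and a translation together with a rescaling in $CSp(3)$ carries $\hat F^{-1}(0)$ onto $\{U_{22}=U_{13}\}$; thus $\hat F\sim_{CSp(3)}F$. Equivalently, one argues that the flat $\mathfrak{csp}(3)$-valued connection $\eta$ constructed in the proof of Theorem~\ref{embed} is, in this homogeneous case, unique up to $\mathrm{Ad}(CSp(3))$, since hyperbolic hyperplanes form a single $CSp(3)$-orbit and $CSp(3)$ acts transitively on $\Lambda^o$.

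The routine parts are the reduction in the second paragraph and the representation-theoretic identification of the wave-equation hyperplane with the $\mathcal{V}_4$-summand of $\mathrm{Sym}^2\mathcal{V}_2$. The one genuinely substantive point is the uniqueness statement: that the rigidity of the flat $GL(2)$-structure furnished by Corollary~\ref{T_unique} propagates to rigidity of its realization inside $\Lambda^o$, i.e.\ that the flat structure admits, up to $CSp(3)$, only the one embedding as a Hessian hydrodynamic PDE.
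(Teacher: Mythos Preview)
Your existence argument is correct but takes a genuinely different route from the paper.  The paper proves Theorems~\ref{PDE0}--\ref{PDE611} uniformly by the exponentiation technique illustrated for Theorem~\ref{PDE8}: set $T=0$ in the $\mathfrak{csp}(3)$-valued form $\eta$ from Theorem~\ref{embed}, so that for horizontal $v$ one has $\eta_b(v)=\left(\begin{smallmatrix}0&0\\\alpha(v)&0\end{smallmatrix}\right)$ with $\alpha$ as in Equation~(\ref{alpha beta}); then $\exp_I(\eta_b(v))=I+\eta_b(v)$ and $U=\Pi(\exp_I(\eta_b(v)))=\alpha(v)$, from which $U_{22}=v_0=U_{13}$ is the sole relation.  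You instead observe directly that the symbol covector of $u_{22}-u_{13}$ is, up to scale, the $SL(2)$-invariant contraction $\pair{\cdot,\cdot}_2$ on $\mathrm{Sym}^2(\mathcal{V}_2)$, so the (constant) tangent hyperplane is the $\mathcal{V}_4$ summand, the induced cone distribution is translation-invariant, and hence $T\equiv0$.  Your argument is cleaner and more conceptual for this flat case; the paper's has the advantage of being the same machine that produces the PDEs for $\{8\}$, $\{7,1\}$, $\{6,2\}$, and $\{6,1,1\}$.

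On uniqueness, you rightly isolate this as the substantive point, and you give more than the paper does (the illustrated proof of Theorem~\ref{PDE8} constructs the PDE but does not argue uniqueness explicitly).  However, your symmetry-group sketch has a gap: the assertion that the normal abelian $\mathcal{V}_4$ in $\Gamma\cong GL(2)\ltimes\mathcal{V}_4$ \emph{must} be realized in $CSp(3)$ as a subgroup of the translation group $U\mapsto U+A$ is not justified.  A 5-dimensional abelian subalgebra of $\mathfrak{csp}(3)$ acting with an open orbit on a hypersurface of $\Lambda^o$ is not \emph{a priori} conjugate into the lower-triangular block $\mathfrak{g}_{-1}$; you need either a short classification of such abelian subalgebras or the observation that the second fundamental form of $\hat M$ in $\Lambda^o$ is an invariant of the $GL(2)$-structure and vanishes in the flat case, forcing $\hat M$ to be affine.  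Your alternative remark---that hyperbolic tangent hyperplanes form a single $CSp(3)$-orbit---only matches 1-jets of the two hypersurfaces and does not by itself yield equivalence of the hypersurfaces.  Either completion would close the argument.
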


\begin{thm}
The root type $\{8\}$ is represented by the first flow of the dKP hierarchy,
\begin{equation} u_{22}=u_{13}-\frac12\ (u_{33})^2\end{equation} and this
representation is unique up to $CSp(3)$.  Therefore, the
first flow of the dKP hierarchy is the unique hydrodynamic PDE 
with seven symmetries.  \label{PDE8} 
\end{thm}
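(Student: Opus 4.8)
The plan is to mirror the proof of Theorem~\ref{PDE0} for the wave equation, using the $\{8\}$-reduced structure of Section~\ref{symmred} in place of the flat one. Because $\{8\}$ is a single $GL(2)$-orbit, Theorem~\ref{classification} together with Corollary~\ref{T_unique} shows that there is, up to local $GL(2)$-equivalence, exactly one $(\mathcal{B},M,p)_{2,3}$ representing $\{8\}$, and it is the real-analytic structure whose coframe is integrated in Equation~(\ref{8int}). By Theorem~\ref{embed} this structure is realized by some Hessian hydrodynamic PDE of the form~(\ref{PDE}), and by Theorem~\ref{thm:F23int} any PDE so obtained is automatically integrable via $3$-parameter hydrodynamic reductions. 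Thus the task splits into (i) identifying the PDE produced by the construction of Theorem~\ref{embed} as the first flow of the dKP hierarchy, and (ii) checking that this determines the PDE up to $CSp(3)$ and that the symmetry count is seven.

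For (i) I would take $v=x^8$, pass to the section $a=1$, $b=0$ of $\mathcal{B}^{x^8}\to M$, and read off $\omega^{-4},\dots,\omega^4$ as $1$-forms in coordinates $\xi^{-4},\dots,\xi^4$ on $M$ from Equation~(\ref{8int}); the only nonlinearity available is carried by the two terms proportional to $\xi^4$ appearing in $\omega^{-2}$ and $\omega^{-4}$. Feeding this coframe into the explicit Maurer--Cartan form $\eta$ of Theorem~\ref{embed} (the one with the free entries of $\beta_T$ set to zero, as in \cite{GL2maplecode}) produces $h:\mathcal{B}^{x^8}\to CSp(3)^o$ and hence the embedding $i=\Pi\circ h:M\to\Lambda^o$; the semibasic block of $\eta$ is the Hankel matrix $\alpha$ of Equation~(\ref{alpha beta}), so integrating $i^*(\mathrm{d}U_{ij})$ against this coframe yields the six functions $U_{ij}$ on $M$, and eliminating the $\xi$'s exhibits $i(M)$ as the hypersurface $U_{22}=U_{13}-\frac12 U_{33}^2$ in $\Lambda^o$. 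An equivalent and perhaps more transparent check is to start instead from $M=F^{-1}(0)$ for this PDE, use $(U_{11},U_{12},U_{13},U_{23},U_{33})$ with $U_{22}=U_{13}-\frac12 U_{33}^2$ as coordinates, build a $GL(2)$ coframe adapted to the rational normal cone cut out on $M$ by the Veronese cones, run the reduction of Corollary~\ref{coframe}, and verify that the essential torsion $T$ lies in $[x^8]=\{8\}$; this is a computation of the kind recorded in \cite{Smith2009, GL2maplecode}.

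For (ii), suppose $F(u_{ij})=0$ and $\hat F(u_{ij})=0$ are two Hessian hydrodynamic PDEs representing $\{8\}$. By Theorem~\ref{thm:F23int} (and Corollary~\ref{holonomic-net}, which removes the holonomic-net hypothesis) each induces a $2,3$-integrable $GL(2)$-structure whose torsion meets the single orbit $\{8\}$, so by Corollary~\ref{T_unique} the two structures are locally $GL(2)$-equivalent. Lifting such an equivalence to the coframe bundles preserves $\omega,\varphi,\lambda$ and $T$, hence preserves the form $\eta$ built from them, so by the Fundamental Lemma of Lie Groups the two immersions into $CSp(3)^o$ differ by left translation by a fixed $g\in CSp(3)$; projecting to $\Lambda^o$ gives $g(F^{-1}(0))=\hat F^{-1}(0)$, that is, $F$ and $\hat F$ differ by a $CSp(3)$ contact transformation. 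Finally, the lemma preceding Corollary~\ref{PDEnone} says a PDE representing $v$ has $9-\dim[v]$ point symmetries, so any PDE representing $\{8\}$ has $9-2=7$; conversely, $\dim[v]=2$ means $\rank J(v)=2$, so by Lemma~\ref{matrixmiracles} $v$ has a single distinct root, necessarily real of multiplicity $8$, whence $[v]=\{8\}$. Thus the dKP first flow is the unique Hessian hydrodynamic PDE with seven symmetries.

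The step I expect to be the real work is (i): choosing $\beta_T$ and $\gamma$ so that $\mathrm{d}\eta+\eta\wedge\eta=0$, carrying the $\{8\}$-reduced coframe~(\ref{8int}) through $i=\Pi\circ h$, integrating the pulled-back tautological forms on $\Lambda^o$, and recognizing the resulting quadric relation among the $U_{ij}$ as the dKP normal form rather than merely \emph{some} hyperbolic PDE; everything else is orbit-and-dimension bookkeeping or a citation of Theorems~\ref{embed} and~\ref{thm:F23int} and Corollary~\ref{T_unique}.
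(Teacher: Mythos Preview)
Your proposal is correct and establishes the same result, but your route for part (i) is more laborious than the paper's. You plan to use the integrated coframe in Equation~(\ref{8int}), construct $h$ by a second integration, and then eliminate the $\xi$'s. The paper bypasses all of this integration: it simply evaluates $\eta$ at a single point $b$ (with $T(b)=x^8/322560$) on the horizontal subspace $\ker(\varphi,\lambda)$, observes that the resulting $5$-parameter family $\eta_b(v)\in\mathfrak{csp}(3)$ spans a Lie subalgebra (so that $\{\exp_I(\eta_b(v))\}$ genuinely parametrizes a neighborhood of $I$ in the image of $h$, thanks to the homogeneity noted in Section~\ref{symmred}), exponentiates the nilpotent matrix $\eta_b(v)$ explicitly, and reads off $U=\Pi(\exp_I(\eta_b(v)))=AB^{-1}$ via Equation~(\ref{Sp3U}). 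Because the matrix is strictly lower block-triangular with a single off-diagonal entry from $\beta_T$, the exponential terminates at cubic order in $v$, and the relation $U_{22}=U_{13}-\tfrac12(U_{33})^2$ drops out by inspection. Your alternative verification (compute the torsion of $F^{-1}(0)$ directly and check it lands in $\{8\}$) is also valid and is in some sense the safest confirmation, but the paper does not use it.

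For part (ii) your argument is actually more complete than what the paper writes down: the paper's proof only exhibits the representative PDE, tacitly leaving uniqueness up to $CSp(3)$ and the symmetry count to the single-orbit property of $\{8\}$, Corollary~\ref{T_unique}, and the symmetry lemma preceding Corollary~\ref{PDEnone}. Your lifting argument (that a local $GL(2)$-equivalence preserves $\eta$ and hence the two immersions differ by left translation in $CSp(3)$) is exactly the right justification, and your observation that $\dim[v]=2$ forces $[v]=\{8\}$ via Lemma~\ref{matrixmiracles} cleanly closes the converse direction.
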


\begin{thm}
The root type $\{7,1\}$ is represented by the PDE  \begin{equation} u_{22} =
u_{13}  - \frac1{48} u_{33}  + \frac12 u_{33}u_{23} \end{equation} and this
representation is unique up to $CSp(3)$.
\label{PDE71}
\end{thm}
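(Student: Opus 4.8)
The plan is to build the $GL(2)$-structure representing $\{7,1\}$ explicitly, realize it inside $\Lambda^o$ by the apparatus from the proof of Theorem~\ref{embed}, read off the defining function $F$, and then deduce uniqueness from Corollary~\ref{T_unique}. First I would record the facts about the root type. A binary octic of type $\{7,1\}$ has two distinct (real) roots, so by Lemma~\ref{matrixmiracles} $\rank J(v)=3$ and hence $\dim[v]=3$; as noted in Section~\ref{symmred}, this three-dimensional root type is a single $GL(2)$-orbit which is closed under the $GL(2)$-action. Consequently, by Theorem~\ref{classification} and Corollary~\ref{T_exist} there is a connected $2,3$-integrable $GL(2)$-structure whose torsion covers all of $\{7,1\}$, and in particular represents the convenient point $v=x^{7}y$.

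Next I would carry out the symmetry reduction of Section~\ref{symmred}: the bundle $\mathcal{B}^v=\{b:T(b)=v\}$ is a $6$-dimensional local Lie group whose coframe $(\omega,\lambda,\varphi)$ is constrained by the relations $\ker\mathrm{d}T(v)=\ker J(v)$ (here $\mathrm{d}T$ has both vertical and semi-basic components, unlike the $\{8\}$ case). Substituting $T=v$ into the structure equations~(\ref{eqn:int3-n4-str}) and using these relations gives a closed Maurer--Cartan system on $\mathcal{B}^v$, which I would integrate explicitly, exactly as equations~(\ref{eqn:x8-red})--(\ref{8int}) do for $\{8\}$, producing the $\omega^{a}$, $\varphi_i$, $\lambda$ as $1$-forms in local coordinates.

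Then I would run the construction from the proof of Theorem~\ref{embed}: fix the semi-basic Hankel form $\alpha$ and the form $\beta$ of~(\ref{alpha beta}), and solve $\mathrm{d}\eta+\eta\wedge\eta=0$ for the remaining free coefficients of $\beta_T$ and of the symmetric form $\gamma$ (a solution exists by that theorem). This yields a bundle immersion $h:\mathcal{B}^v\to CSp(3)^o$ covering an embedding $i:M\hookrightarrow\Lambda^o$. From $i^{*}(\mathrm{d}U_{ij})=\alpha_{ij}$ I would integrate to express the Hessian coordinates $U_{ij}$ as functions on the $5$-manifold $M$, eliminate the coordinates, and obtain the single relation $F(U_{ij})=0$ cutting out $i(M)$; after a constant $CSp(3)$ normalization this should be precisely $u_{22}=u_{13}-\frac1{48}u_{33}+\frac12 u_{33}u_{23}$. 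A final check that the leading symbol of $F$ is nonsingular with split signature, together with Lemma~\ref{hyperbolics} and Theorem~\ref{thm:F23int}, confirms that this is a Hessian hydrodynamic PDE whose induced $GL(2)$-structure represents $\{7,1\}$.

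For uniqueness, suppose $\hat F(u_{ij})=0$ is another Hessian hydrodynamic PDE representing $\{7,1\}$. Since $\{7,1\}$ is a single $GL(2)$-orbit covered by the torsion of every $GL(2)$-structure that represents it, the structure $\hat{\mathcal{B}}$ of $\hat F$ and the model above both represent $v=x^{7}y$, so by Corollary~\ref{T_unique} they are locally $GL(2)$-equivalent via some bundle map $\tilde f$. Composing $h$ and the analogous immersion $\hat h$ with $\tilde f$ produces two immersions of the model into $CSp(3)$ pulling back the Maurer--Cartan form to the same $\eta$; by the uniqueness clause of the Fundamental Lemma of Lie groups they differ by left translation by a fixed $g\in CSp(3)$, and, using the $CSp(3)$-equivariance of the projection $CSp(3)^o\to\Lambda^o$, this $g$ is a contact transformation carrying $F^{-1}(0)$ onto $\hat F^{-1}(0)$, so $F$ and $\hat F$ are $CSp(3)$-equivalent. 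The heaviest step — though essentially routine — is the third one: integrating the reduced structure equations, solving $\mathrm{d}\eta+\eta\wedge\eta=0$, and performing the elimination so that the constants come out to be exactly $-\frac1{48}$ and $\frac12$; this is best handled with the computer-algebra file~\cite{GL2maplecode}, everything else being bookkeeping in $SL(2)$ representation theory and the method of equivalence.
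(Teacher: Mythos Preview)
Your approach is sound and will produce the theorem, but it differs from the route the paper actually takes. The paper does \emph{not} integrate the reduced structure equations on $\mathcal{B}^{v}$ before embedding; instead it works directly at a single point $b$ with $T(b)$ a suitably scaled representative of the orbit, evaluates the $\mathfrak{csp}(3)$-valued form $\eta$ there on horizontal vectors $v\in\ker(\varphi,\lambda)$, and parametrizes $h(\mathcal{B}')$ by the matrix exponentials $\exp_{I}(\eta_{b}(v))$. Projecting by $\Pi$ and eliminating the five parameters $v_{-4},\dots,v_{4}$ yields the relation $F(U)=0$ without ever writing down coordinates on $\mathcal{B}^{v}$ or integrating $\mathrm{d}U_{ij}$. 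Your plan, by contrast, first produces the analogue of equations~(\ref{eqn:x8-red})--(\ref{8int}) for $\{7,1\}$ and then computes $U=\Pi\circ h$ in those coordinates; this is more systematic and arguably more transparent, but it is also heavier, and the integration you cite from Section~\ref{symmred} is not used in the paper's own proof of Theorems~\ref{PDE0}--\ref{PDE611}. Two small cautions: the identity ``$i^{*}(\mathrm{d}U_{ij})=\alpha_{ij}$'' only holds at the basepoint $U=0$ (away from the identity $\Pi^{*}\mathrm{d}U$ acquires the $B^{-1}$ and $AB^{-1}\,\mathrm{d}B$ corrections from $U=AB^{-1}$), so you should compute $U=\Pi(h)$ directly rather than integrate that relation; and you will need to scale $v=x^{7}y$ by a specific constant, as the paper does with $x^{8}/322560$, to land on the stated coefficients $-\tfrac{1}{48}$ and $\tfrac{1}{2}$.

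Your uniqueness argument via Corollary~\ref{T_unique} and the uniqueness clause of the Fundamental Lemma is more explicit than anything the paper writes out (the paper simply asserts uniqueness, relying implicitly on the fact that $\{7,1\}$ is a single $GL(2)$-orbit together with Corollary~\ref{T_unique}); it is correct, provided you construct $\hat{\eta}$ from $(\hat\omega,\hat\varphi,\hat\lambda,\hat T)$ with the same choice of free parameters in $\beta_{T}$ and $\gamma$ as for $\eta$, so that the $GL(2)$-equivalence $\tilde f$ really pulls $\hat\eta$ back to $\eta$.
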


\begin{thm}
The root type $\{6,2\}$ is represented by the PDE \begin{equation}
u_{22} = u_{13} + \frac{7\ u_{23}}{5u_{33}-14} \end{equation} and this
representation is unique up to $CSp(3)$.
\label{PDE62}
\end{thm}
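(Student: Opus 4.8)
The plan is to run the general construction of Theorem~\ref{embed} on the single $GL(2)$-structure whose torsion fills the root type $\{6,2\}$, and then to read off, in explicit coordinates, the hypersurface in $\Lambda^o$ that it produces. Since $\{6,2\}$ has dimension $3$, it is a single $GL(2)$-orbit (Section~\ref{symmred}), so by Corollary~\ref{T_unique} there is, up to local $GL(2)$-equivalence, exactly one $(\mathcal{B},M,p)_{2,3}$ representing it; fixing a convenient representative $v\in\{6,2\}$ (for instance a suitable multiple of $x^6y^2$), every such $\mathcal{B}$ has $T(\mathcal{B})=\{6,2\}$ by Theorem~\ref{classification}, with $\mathop{\mathrm{range}}J(v)$ three-dimensional.

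First I would carry out the symmetry reduction of Section~\ref{symmred} at $v$: substituting $T\equiv v$ into the structure equations~(\ref{eqn:int3-n4-str}) and imposing the relations $\ker\mathrm{d}T(v)=\ker J(v)$ among the components of $\omega$, $\lambda$, $\varphi$ presents $\mathcal{B}^v$ as a $6$-dimensional local Lie group, whose Maurer--Cartan form I would integrate explicitly in adapted coordinates, exactly as was done for $\{8\}$ in Equation~(\ref{8int}). (Unlike the $\{8\}$ case, $\mathrm{d}T(v)$ here has both vertical and semi-basic components, so the stabilizer fibre varies over $M$; the integration is nonetheless elementary.) Next I would feed this coframe into the construction in the proof of Theorem~\ref{embed}: solve the arithmetic for $\beta_T$ and $\gamma$ so that $\eta$ is a $\mathfrak{csp}(3)$-valued Maurer--Cartan form with $\alpha$ semi-basic, integrate $h:\mathcal{B}\to CSp(3)^o$ covering an embedding $i:M\to\Lambda^o$, and compute $i(M)$ in the $U_{ij}$-coordinates. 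The image is a hypersurface $F(U_{ij})=0$ whose tangent planes cut the Veronese cone in rational normal cones, so by Lemma~\ref{hyperbolics} the corresponding PDE is automatically hyperbolic, and it is $2,3$-integrable by construction, hence Hessian hydrodynamic by Theorem~\ref{thm:F23int}. A $CSp(3)^o$-normalization of the $U$-coordinates (using the fibre over the basepoint together with the residual constants in $\beta_T$) brings $F$ to the stated form $U_{22}-U_{13}-\tfrac{7U_{23}}{5U_{33}-14}$, i.e.\ $u_{22}=u_{13}+\tfrac{7u_{23}}{5u_{33}-14}$.

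Uniqueness up to $CSp(3)$ is then automatic: any Hessian hydrodynamic PDE representing $\{6,2\}$ induces a $2,3$-integrable $GL(2)$-structure representing some $v'\in\{6,2\}$, which by Corollary~\ref{T_unique} is locally $GL(2)$-equivalent to the one above; since the PDE-to-structure correspondence intertwines the $CSp(3)$-action on Hessian PDEs with the $GL(2)$-action on structures, and the embedding of a given $GL(2)$-structure into $\Lambda^o$ is determined up to $CSp(3)$ by the rigidity in the proof of Theorem~\ref{embed}, the two PDEs are $CSp(3)$-equivalent. The main obstacle is the middle step: the explicit solution of $\mathrm{d}\eta+\eta\wedge\eta=0$ for $\beta_T$ and $\gamma$, the subsequent integration of the coframe, and the computation of $i(M)$ are substantial computer-algebra tasks (carried out in \cite{GL2maplecode, Smith2009}), and some care is needed to choose the $CSp(3)^o$-normalization that yields precisely the displayed rational expression rather than an equivalent but messier defining function.
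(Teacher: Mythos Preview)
Your plan is correct and lands in the same place as the paper, but it takes a longer road. The paper proves Theorems~\ref{PDE0}--\ref{PDE611} uniformly by the method illustrated for Theorem~\ref{PDE8}: fix a representative $T(b)$ in the root type, evaluate the $\mathfrak{csp}(3)$-valued form $\eta$ of Theorem~\ref{embed} at $b$ on a generic horizontal vector $v\in\ker(\varphi,\lambda)$, compute the matrix exponential $\exp_I(\eta_b(v))$ in $CSp(3)$, and project via $\Pi=AB^{-1}$ to read off the single relation among the $U_{ij}$. Your preliminary step---integrating the reduced structure equations of $\mathcal{B}^v$ to obtain an explicit coframe in coordinates, as in Equation~(\ref{8int}), and then integrating $h$ separately---is not needed: the paper bypasses both integrations by working directly with the matrix exponential in the target group. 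Your detour is not wrong, and it would produce the same hypersurface, but it is considerably more laborious (the $\{6,2\}$ analogue of Equation~(\ref{8int}) is already messy, and you would still need to solve $h^*\mu=\eta$ afterwards). The $\beta_T$ and $\gamma$ you mention are already determined universally in the proof of Theorem~\ref{embed} (and recorded in \cite{GL2maplecode}); they are not re-solved for each root type. Your uniqueness argument via Corollary~\ref{T_unique} and the single-orbit observation is the correct one and matches the paper's implicit reasoning.
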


\begin{thm}
The root type $\{6, 1, 1\}$ is represented by the PDE
\begin{equation}
u_{22}=u_{13}
+\frac{7 u_{23}\left(u_{23}-u_{33}\right)}{(5u_{33}-14)}
+\frac{49(-(u_{33})^2 + 14u_{33} - 28)}{12 (5u_{33}-14)}
-\frac{49}{6}\left(\frac{-(5u_{33}-14)}{14}\right)^{2/5}
\end{equation}
and this 
representation is unique up to $CSp(3)$.
\label{PDE611}
\end{thm}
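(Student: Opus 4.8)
The plan is to run the immersion construction from the proof of Theorem~\ref{embed} for a carefully normalized representative of the root type $\{6,1,1\}$, read off the resulting hypersurface in $\Lambda^o$, and deduce uniqueness from the fact that $\{6,1,1\}$ is a single $GL(2)$-orbit. First I would fix a polynomial $v = c\,x^6(x-ay)(x-by)\in\{6,1,1\}\subset\mathcal{V}_8$ with $c\neq 0$ and distinct nonzero $a,b$, the constants to be pinned down later so that the final formulas come out exactly as stated. By Corollary~\ref{T_exist} there is a real-analytic connected $(\mathcal{B},M,p)_{2,3}$ representing $v$; since $\{6,1,1\}$ has three distinct roots, Lemma~\ref{matrixmiracles} gives $\dim[v]=4$, so $\mathrm{Stab}(v)$ is discrete and, as in Section~\ref{symmred}, $\mathcal{B}^v=\{b:T(b)=v\}$ is a $5$-dimensional local Lie group finitely covering $M$. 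Imposing $\mathrm{d}T=0$, i.e.\ restricting to $\ker J(v)$, expresses $\lambda$ and the relevant components of $\varphi$ as explicit multiples of $\omega$; substituting into Equation~(\ref{eqn:int3-n4-str}) gives closed structure equations for $\omega$ on $\mathcal{B}^v$, which integrate explicitly much as in Equation~(\ref{8int}).

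Next I would build the bundle immersion $h:\mathcal{B}^v\to CSp(3)^o$: with $\alpha,\beta$ as in Equation~(\ref{alpha beta}), solve $\mathrm{d}\eta+\eta\wedge\eta=0$ for the remaining data $\beta_T$ and $\gamma$ (taking the simplest solution, with the undetermined entries of $\beta_T$ set to zero), integrate the flat $\mathfrak{csp}(3)$-valued form by the Fundamental Lemma of Lie Groups, and compose with $\Pi$ to obtain the embedding $i:M'\to\Lambda^o$. The image $i(M')$ is a smooth hypersurface, and writing it in the Hessian coordinates $U_{ij}=u_{ij}$ produces a single equation $F(u_{ij})=0$; the choice of $a,b,c$ is made precisely so that this is the equation displayed in the statement. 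That $F$ is hyperbolic is automatic, since its tangent hyperplanes meet the Veronese cone in genuine rational normal cones, so the leading symbol is nondegenerate with split signature; and by construction (the semi-basic choice of $\alpha$) the $GL(2)$-structure that Lemma~\ref{hyperbolics} attaches to $F^{-1}(0)$ is the one we started from, hence $2,3$-integrable and representing $v$ by Theorem~\ref{thm:F23int}. As a check one may instead apply the equivalence method of Section~\ref{connection} directly to $F^{-1}(0)$ and confirm that its torsion factors as $(\ell_1)^6\ell_2\ell_3$ with distinct real linear forms at each point.

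For uniqueness, suppose $\hat F(u_{ij})=0$ is another Hessian hydrodynamic PDE representing some $\hat v\in\{6,1,1\}$, with induced $GL(2)$-structure $\hat{\mathcal{B}}\to\hat M$. Since $\{6,1,1\}$ is a single $GL(2)$-orbit and $T$ is $GL(2)$-equivariant, the fiber image $T(\mathcal{B}_p)$ is the whole orbit $\{6,1,1\}$, so $\hat v\in T(\mathcal{B}_p)$ as well; Corollary~\ref{T_unique} then furnishes a local $GL(2)$-equivalence between $\mathcal{B}$ and $\hat{\mathcal{B}}$. Because $GL(2)$ sits inside $CSp(3)$ compatibly with the action on $\Lambda^o$ (Equations~(\ref{Sp3actionU}) and (\ref{alpha beta})), and because the immersion $h$ of Theorem~\ref{embed} is determined only up to left translation by $CSp(3)$, this equivalence is realized by a contact transformation in $CSp(3)^o$ carrying $\hat F^{-1}(0)$ onto $F^{-1}(0)$. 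Hence the representation is unique up to $CSp(3)$.

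The step I expect to be the main obstacle is the explicit solution of $\mathrm{d}\eta+\eta\wedge\eta=0$ for $\beta_T$ and $\gamma$, the subsequent integration of the Maurer--Cartan form, and the passage to Hessian coordinates --- a lengthy but mechanical computer-algebra computation --- together with arranging the constants $a,b,c$ so the hypersurface appears in precisely the stated form (the $(5u_{33}-14)$ denominator and the $2/5$-power term make this bookkeeping delicate). The conceptual content, existence via Theorem~\ref{embed} and uniqueness via the single-orbit property, is routine once the $\{6,1,1\}$ stratum has been located in the classification of Theorem~\ref{classification}.
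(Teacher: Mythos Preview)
Your proposal is correct and takes essentially the same approach as the paper, which proves Theorems~\ref{PDE0}--\ref{PDE611} by a single technique illustrated only for Theorem~\ref{PDE8}: fix a representative torsion in the root type, evaluate the Maurer--Cartan form $\eta$ of Theorem~\ref{embed} at that point, exponentiate into $CSp(3)$, project via $\Pi$, and read off the single relation among the entries of $U$; uniqueness follows from the single-$GL(2)$-orbit property of $\{6,1,1\}$ together with Corollary~\ref{T_unique}. Your explicit passage to the reduced bundle $\mathcal{B}^v$ before integrating is a harmless elaboration consistent with Section~\ref{symmred} but not something the paper's sketch invokes separately.
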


Theorems \ref{PDE0} through \ref{PDE611} are proven via the same technique.
For illustration, consider the simplest non-trivial case,
Theorem~\ref{PDE8}.  

\begin{proof}[Proof of Theorem~\ref{PDE8}]
Consider a 2,3-integrable $GL(2)$-structure ${\pi:\mathcal{B}\to M}$ with
$T(b)=x^8\frac1{322560}$,
so $[T(\mathcal{B})]=\{8\}$.  The constant is chosen for the aesthetic appeal of the
resulting PDE.  The goal is to describe Theorem~\ref{embed}'s embedded
hypersurface $i(M')=\{ \Pi(h(q))\in \Lambda^o : \text{$q$ near $b$}\}$ as
the locus of a single equation $F(U)=0$.

As $CSp(3)^o$ is a matrix group, the open set $\{ g=h(q): \text{$q$ near
$b$}\}$ can be described as $\{ \exp_I(\eta_b(v)) : v \in \mathbf{T}_b
\mathcal{B}\}$.
Moreover, only $v \in \ker(\varphi,\lambda)$ need be considered, since the
fibers of ${\pi:\mathcal{B}\to M}$ immerse into the fibers of
${\Pi:CSp(3)^o\to\Lambda^o}$.
Fix an arbitrary $v \in \ker(\varphi,\lambda) \subset \mathbf{T}_b\mathcal{B}$ and write
$v$ in components using the tautological 1-form, $v \lhk \omega = (v_{-4},
v_{-2}, v_{0}, v_{2}, v_{4}) \in \mathbb{R}^5$.  
Of course, $v \lhk \omega$ does not actually provide local coordinates on $\mathcal{B}$
or $M$; however, the matrix $\eta_b(v)$ still represents a generic
point in $h_*(\mathbf{T}_b(\mathcal{B}))$, as seen here:
\begin{equation}
\eta_b(v)=
\begin{pmatrix}
0 & 0 & 0 & 0 & 0 & 0\\
0 & 0 & 0 & 0 & 0 & 0\\
-v_{4} & 0 & 0 & 0 & 0 & 0\\
v_{-4} & v_{-2} & v_{0} & 0 & 0 & v_{4}\\
v_{-2}& v_{0} & v_{2} & 0 & 0 & 0\\
v_{0} & v_{2} & v_{4} & 0 & 0 & 0
\end{pmatrix}.
\end{equation}
Therefore,  
\begin{equation}
\exp_I(\eta_b(v)) =
\begin{pmatrix}
1& 0& 0& 0& 0& 0\\
0& 1& 0& 0& 0& 0\\
-v_{4}& 0& 1& 0& 0& 0\\
v_{-4}-\frac{1}{6}v_{4}^3& v_{-2}+\frac{1}{2}v_{4}v_{2}& v_{0}+\frac{1}{2}v_{4}^2& 1& 0& v_{4}\\
v_{-2}-\frac{1}{2}v_{4}v_{2}& v_{0}& v_{2}& 0& 1& 0\\
v_{0}-\frac{1}{2}v_{4}^2&v_{2}& v_{4}& 0& 0& 1
\end{pmatrix}.
\end{equation}
So, using Equation~(\ref{Sp3U}), a generic point $U \in i(M')$ looks like
\begin{equation}
U=\Pi(\exp_I(\eta_p(v))) = 
\begin{pmatrix}
v_{-4}-\frac{1}{6}v_{4}^3+(v_{0}+\frac{1}{2}v_{4}^2)v_{4}& v_{-2}+\frac{1}{2}v_{4}v_{2}& v_{0}+\frac{1}{2}v_{4}^2\\
v_{-2}+\frac{1}{2}v_{4}v_{2}& v_{0}& v_{2}\\
v_{0}+\frac{1}{2}v_{4}^2& v_{2}& v_{4}
\end{pmatrix}.
\end{equation}
There is a single relation between the entries of such $U$:
\begin{equation}
U_{22} = U_{13}-\frac12\ (U_{33})^2.
\label{dKP}
\end{equation}
When $U$ is interpreted as the Hessian of $u:\mathbb{R}^3 \to \mathbb{R}$,
Equation~(\ref{dKP}) is the first flow of the dKP hierarchy, a well-known
example of a Hessian hydrodynamic equation.  
\end{proof}

The only change for the other root types is that $\eta$ is more complicated;
hence, its exponential is (immensely) more difficult to compute, and the
relation $F(U)=0$ is more difficult to recognize.  Note also that $U=0$ is
always in the locus of the equation obtained by this procedure. Therefore,
PDEs such as the Boyer--Finley equation, $u_{xx} + u_{yy} = e^{u_{tt}}$
(which has six symmetries and must represent one of the 3-dimensional
root types \cite{Ferapontov2009}), will not directly appear as representatives
via this procedure.  None-the-less, every $CSp(3)$ equivalence class of
Hessian hydrodynamic PDEs must arise this way. 

\subsection{Hyperbolic Planar PDEs}
\label{section-planar}
This section presents some preliminary but intriguing observations regarding
the hyperbolic linear Pfaffian system $\mathcal{I}$ describing bi-secant
surfaces in Theorem~\ref{bisecant} and its relation to hyperbolic second-order
planar PDEs, 
\begin{equation}
f(\xi^1, \xi^2, z, z_1, z_2, z_{11},z_{12},z_{22})=0.
\label{hypPDE}
\end{equation}
Equation~(\ref{hypPDE}) defines a 7-dimensional manifold $\Sigma_f = f^{-1}(0)
\subset \mathbb{J}^2(\mathbb{R}^2,\mathbb{R})$ whose structure equations are
obtained by pulling back the contact system \cite{Gardner1993, The2008}. Such
$\Sigma_f$ admit a point-wise classification into Monge--Amp\`ere,
Goursat, or generic-type equations.

\begin{thm}
Consider $(\mathcal{B},M,p)_{2,3}$ with $T(b)=v$
for some $b \in \mathcal{B}_p$.  Over a neighborhood $M'$ of $p$, there is a
bundle $W \to M'$ with 7-dimensional fiber and a submersion
$f:\mathcal{B}(M') \to W$ such that the ideal $\mathcal{I}$ from
Theorem~\ref{bisecant} describing the existence of bi-secant surfaces
through $p$ is the pull-back of a hyperbolic linear Pfaffian ideal
$\bar{\mathcal{I}}$ on $W$.    
This $W$ admits a coframe $(\beta^1,
\ldots, \beta^7)$ such that 
\begin{equation}
\begin{split}
\mathrm{d}\beta^1 &\equiv
\beta^2 \wedge \beta^4 + \beta^3\wedge \beta^6,\ \mod \beta^1 \\
\mathrm{d}\beta^2 &\equiv U_1\ \beta^3 \wedge\beta^7 
+ \beta^4\wedge\beta^5,\ \mod \beta^1,\beta^2\\
\mathrm{d}\beta^3 &\equiv U_2\ \beta^2\wedge\beta^5 +
\beta^6\wedge\beta^7,\ \mod \beta^1,\beta^3.
\end{split}
\label{U1U2}
\end{equation}
for $U_1 = -645120 T_{-8} \frac{(\mu_3)^3}{\mu_2\mu_1}$ and 
$U_2 = -645120 T_{ 8} \frac{(\mu_2)^3}{\mu_3\mu_1}$ for some non-zero
functions $\mu_1$, $\mu_2$, and $\mu_3$ on $W$. 

Moreover, there is some second-order hyperbolic planar PDE $f$ and a
diffeomorphism $\varphi:\Sigma_f \to W$ such that these
structure equations on $W$ pull back via $\varphi^*$ to the contact-induced
structure equations on $\Sigma_f$.
\label{W}
\end{thm}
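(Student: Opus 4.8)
\emph{The plan is to collapse the exterior differential system $\mathcal{I}$ of Theorem~\ref{bisecant} onto a seven-manifold, identify the collapsed system with the abstract contact system of a hyperbolic planar equation, and invoke the known realization theory for such systems.}

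\emph{Step 1: the quotient $W$.} First I would record the Cartan (retracting) system of $\mathcal{I}$. Since $\mathcal{B}$ is $2$-integrable (being $2,3$-integrable) we have $T=T_8$, so the torsion absorption of the proof of Theorem~\ref{bisecant} is valid and $\mathcal{I}$ is generated algebraically by $\omega^{-2},\omega^{0},\omega^{2}$ together with the $2$-forms in Equation~(\ref{absorbed_involutive_structure}). Contracting those $2$-forms with vector fields, the Cartan system of $\mathcal{I}$ is $C(\mathcal{I})=\{\omega^{-4},\omega^{-2},\omega^{0},\omega^{2},\omega^{4},\pi_1,\pi_2\}$, where $\pi_1=2\varphi_2-322560\,T_{8,-2}\,\omega^{4}$ and $\pi_2=-2\varphi_{-2}-322560\,T_{8,2}\,\omega^{-4}$; this system has constant rank $7$ because $\{\omega^{a},\varphi_{2},\varphi_{-2}\}$ are independent. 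By Cartan's retraction theorem, $\mathcal{I}$ is locally the pull-back of a linear Pfaffian ideal $\bar{\mathcal{I}}$ on a seven-dimensional manifold $W$ under a submersion $f:\mathcal{B}(M')\to W$ with two-dimensional fibers, and the independence condition $\omega^{-4}\wedge\omega^{4}$ descends. Since $C(\mathcal{I})^{\perp}$ is spanned by the frame vectors dual to $\varphi_0$ and $\lambda$, it is tangent to the $\pi$-fibres, so $\pi$ factors as $\mathcal{B}(M')\to W\to M'$, exhibiting $W$ as a bundle over $M'$. The hyperbolicity established in Theorem~\ref{bisecant} (characteristic variety two real points, involutive tableau with $s_1=2,\,s_2=0$) passes to $\bar{\mathcal{I}}$.

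\emph{Step 2: the structure equations.} Next I would compute the structure equations of $\bar{\mathcal{I}}$ by pulling back to $\mathcal{B}$: choose generators $\bar\alpha^1,\bar\alpha^2,\bar\alpha^3$ of $\bar{\mathcal{I}}$ and a coframe completion, express each $f^{*}\bar\alpha^{i}$ in the coframe $\{\omega^{a},\varphi_j,\lambda\}$, and differentiate using the structure equations of Theorem~\ref{int3-n4-str} for $\mathrm{d}\omega$, $\mathrm{d}\varphi$, $\mathrm{d}\lambda$ and $\mathrm{d}T=J(T)(\omega,\lambda,\varphi)$. Matching coefficients in the basis of quadratic monomials built from the $f^{*}\bar\alpha^{j}$ gives the structure equations on $W$; after rescaling the coframe by nowhere-vanishing functions $\mu_1,\mu_2,\mu_3$ (this is exactly the ambiguity in the adapted coframe) and relabeling so that $(\beta^1,\beta^2,\beta^3)$ are multiples of $(\omega^{0},\omega^{-2},\omega^{2})$ and $(\beta^4,\beta^5,\beta^6,\beta^7)$ are multiples of $(\pi_1,\omega^{-4},\pi_2,\omega^{4})$, the equations take the form of Equation~(\ref{U1U2}). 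The point to watch is the location of $U_1$ and $U_2$: every connection term $\theta\wedge\omega$ lies in the ideal generated by $\{\omega^{a}\}$, so the only quadratic monomials in $\mathrm{d}\omega^{-2}$ (resp.\ $\mathrm{d}\omega^{2}$) that survive outside $\langle\omega^{0},\omega^{-2}\rangle$ (resp.\ $\langle\omega^{0},\omega^{2}\rangle$) come from the torsion term $\pair{T,\pair{\omega,\omega}_1}_5$, and a weight count forces the coefficient of the relevant monomial to be a multiple of $T_{-8}$ (resp.\ $T_{8}$). Bookkeeping the constants and the $\mu_i$ then gives $U_1=-645120\,T_{-8}\,(\mu_3)^3/(\mu_2\mu_1)$ and $U_2=-645120\,T_{8}\,(\mu_2)^3/(\mu_3\mu_1)$.

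\emph{Step 3: realization by a planar PDE.} Finally I would show $\bar{\mathcal{I}}$ is the contact system of a hyperbolic equation. From Equation~(\ref{U1U2}) the derived flag of $\bar{\mathcal{I}}$ has ranks $3,1,0$: $\beta^1$ is, up to scale, the unique generator with $\mathrm{d}\beta^1\equiv 0$ modulo $\bar{\mathcal{I}}$, whereas $\mathrm{d}\beta^2$ and $\mathrm{d}\beta^3$ are not; and on $\ker\beta^1$ the two-form $\mathrm{d}\beta^1$ has rank $4$, so quotienting by its two-dimensional Cauchy characteristics produces a contact five-manifold, locally $\mathbb{J}^1(\mathbb{R}^2,\mathbb{R})$, into which $W$ maps, with $\beta^2$ and $\beta^3$ in the roles of the $z_1$- and $z_2$-contact forms. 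This is exactly the configuration of the contact system of a second-order scalar PDE in two independent variables (cf.\ B\"acklund's theorem), and the two distinct real characteristic systems visible in $\mathrm{d}\beta^2$ and $\mathrm{d}\beta^3$ make it hyperbolic; the vanishing pattern of $(U_1,U_2)$, equivalently of $(T_{-8},T_{8})$, selects the Monge--Amp\`ere, Goursat, or generic type. Invoking the equivalence and realization theory for hyperbolic planar equations \cite{Gardner1993, The2008} then produces a hyperbolic PDE $f$ and a diffeomorphism $\varphi:\Sigma_f\to W$ under which $\varphi^{*}\bar{\mathcal{I}}$ is the contact-induced ideal on $\Sigma_f$; pulling back along $f$ realizes $\mathcal{I}=f^{*}\bar{\mathcal{I}}$ as claimed. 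The hard part will be Step~2: the expansion is long (this is where the paper defers to computer algebra), and one must check that the resulting normal form~(\ref{U1U2}) is literally the one in the hyperbolic-equation classification, so that the realization theorem of Step~3 applies without modification.
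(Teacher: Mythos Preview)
Your proposal is correct and follows essentially the same route as the paper: quotient $\mathcal{I}$ by its two-dimensional Cauchy characteristic distribution (dual to $\varphi_0$ and $\lambda$) to obtain the seven-manifold $W$, rescale the semi-basic forms $\alpha^i$ by functions $\mu_i$ so that they descend, normalize to reach Equation~(\ref{U1U2}), and then invoke a realization theorem for hyperbolic planar systems (the paper cites St{\"o}rmark's Theorem~11.1.1 rather than sketching the derived-flag argument, and defers the coframe normalization to Appendix~A of \cite{The2008}). Your weight-count explanation for why $U_1,U_2$ pick out $T_{-8},T_{8}$ is a nice addition that the paper leaves implicit in the direct computation.
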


\begin{proof}
Fix $(\mathcal{B},M,p)_{2,3}$.  As in the
general case of Theorem~\ref{bisecant}, the linear Pfaffian system describing
the existence of bi-secant surfaces through $p$ is differentially generated by
$\omega^{-2}$, $\omega^{0}$, and $\omega^{2}$, and it has tableau given by 
\begin{equation}
\mathrm{d}
\begin{pmatrix}
\omega^{-2}\\ \omega^{0}\\ \omega^{2}
\end{pmatrix}=
\begin{pmatrix}
\pi_1 & 0 \\ 0 & 0 \\ 0 & \pi_2 
\end{pmatrix}\wedge
\begin{pmatrix}
\omega^{-2}\\ \omega^{4}
\end{pmatrix}
\end{equation}
for $\pi_1= 2 \varphi_2 - 322560 T_{-2} \omega^4$ and $\pi_2= -2 \varphi_{-2}
- 322560 T_{2} \omega^{-4}$.  The Lie algebra $A(\mathcal{I})$ of Cauchy
characteristics of $\mathcal{I}$ is spanned by the duals of $\lambda$ and
$\varphi^{0}$.  Therefore, the retracting space
$C(\mathcal{I})=A(\mathcal{I})^\perp$ is a rank-seven Frobenius system on
$\mathcal{B}$, so $\mathcal{B}$ admits a foliation by 2-dimensional Cauchy
characteristic surfaces \cite[Section~II.2]{Bryant1991}
\cite[Section~6.1]{Ivey2003}.  Let $W$ denote the 7-dimensional (local) leaf
space for this foliation, so there is a submersion $\tilde{\pi}:\mathcal{B} \to W$.
Since the $\omega$ is semi-basic for the submersion $\tilde{\pi}$, $W$ also admits a
submersion onto a neighborhood of $p \in M$.
It remains to find the structure equations for a coframing on $W$.

Write $\alpha^1=\omega^{0}$, $\alpha^2=\omega^{-2}$, $\alpha^3=\omega^{2}$,
$\alpha^4 = \pi_1$, $\alpha^5=\omega^{-4}$, $\alpha^{6}=\pi_2$,
$\alpha^7=\omega^{4}$, $\alpha^8=\varphi_0$, and $\alpha^9=\lambda$ as a
coframe for $\mathcal{B}$.  To simplify the notation, fix the index
convention $1 \leq i,j,k \leq 7$ and $8 \leq r,s \leq 9$, so the forms
$\alpha^i$ are semi-basic for the bundle $\mathcal{B}\to W$, and the forms
$\alpha^r$ are vertical for the bundle $\mathcal{B}\to W$. Write
$\mathrm{d}\alpha^i = -\frac12C^i_{jk}\alpha^j\wedge\alpha^k -
C^i_{jr}\alpha^j\wedge\alpha^r$, where $C^i_{jk}=-C^i_{kj}$ and $C^i_{ja}$ are
functions of $T$ as determined by Theorem~\ref{int3-n4-str}.  Note that
$C^i_{ja}=0$ if $i\neq j$, and 
\begin{equation}
C^1_{18}=0, C^2_{28}=-4, C^3_{38}=4, C^4_{48}=4, C^5_{58}=-8, C^6_{68}=-4,
C^7_{78}=8,
\end{equation}
\begin{equation}
C^1_{19}=1, C^2_{29}=1, C^3_{39}=1, C^4_{48}=0, C^5_{58}=1, C^6_{68}=0,
C^7_{78}=1.
\end{equation}

Corollary~2.3 of \cite{Bryant1991} implies
that there exist functions $\mu_i$ on $\mathcal{B}$ such that
$\frac{1}{\mu_i}\alpha^i$ is basic.  Define a new co-framing
$(\tilde\alpha^i)$ for $\mathcal{B}$ by setting $\tilde\alpha^i =
\frac1{\mu_i}\alpha^i$ (no sum) and $\tilde{\alpha}^r=\alpha^r$.  Thus,
$\mathbf{T}^*\mathcal{B}$ is (locally) split into basic 1-forms and vertical
1-forms with respect to the bundle $\tilde{\pi}:\mathcal{B}\to W$.  Of course,
$\tilde{\alpha}^i = \tilde{\pi}^*(\beta^i)$ for some independent 1-forms
$\beta^i$, and $(\beta^1, \ldots, \beta^7)$ is the desired co-framing for $W$.
Since $\tilde{\alpha}^i$ is basic, the structure equations of
$(\tilde{\alpha}^i)$ and $(\beta^i)$ are identical. 

There is a lot of freedom in the designation of $\mu_i$.  Write
$\mathrm{d}\mu_i = \mu_{i,j}\tilde{\alpha}^j + \mu_{i,r}\tilde{\alpha}^r$.
The condition that $\tilde{\alpha}^i$ is basic is equivalent to $\mu_{i,r} =
\mu_iC^i_{ir}$ for all $1 \leq i \leq 7$ and $8 \leq r \leq 9$, but
$\mu_{i,j}$ is otherwise free, so any non-zero function $f_i$ on $W$ may be
lifted to $\mathcal{B}$ in local coordinates $(x^1, \ldots, x^9)$ by setting
$\mu_i=f_i\exp(C^i_{i8}x^8+C^i_{i9}x^9)$.

At this point, the coframe $(\beta^i)$ satisfies 
\begin{equation}
\begin{split}
\mathrm{d}\beta^1 &\equiv 0,\\
\mathrm{d}\beta^2 &\equiv \frac{\mu_4\mu_5}{\mu_2}\beta^4\wedge\beta^5,\ \mod \{\beta^1,\beta^2,\beta^3\}\\
\mathrm{d}\beta^3 &\equiv \frac{\mu_6\mu_7}{\mu_3}\beta^6\wedge\beta^7,
\end{split}
\end{equation}
so one may effectively eliminate $\mu_5$ and $\mu_7$ by redefining 
$\beta^5$ as $\frac{\mu_4\mu_5}{\mu_2}\beta^5$ and $\beta^7$ as
$\frac{\mu_6\mu_7}{\mu_3}\beta^7$.  Thus, the structure equations for $W$ satisfy
\begin{equation}
\begin{split}
\mathrm{d}\beta^1 &\equiv 0,\\
\mathrm{d}\beta^2 &\equiv \beta^4\wedge\beta^5,\ \mod \{\beta^1,\beta^2,\beta^3\}\\
\mathrm{d}\beta^3 &\equiv \beta^6\wedge\beta^7.
\end{split}
\end{equation}
One may now re-label the coframe $(\beta^i)$ following the procedure given in
Appendix A of \cite{The2008} to obtain Equation~(\ref{U1U2}).

Although $U_1$ and $U_2$ as written in the theorem are not explicitly
functions on $W$, this is easily remedied.  As noted above, $\mu_i$ may be
taken as the lift of $f_i$ on $W$.  
Also, the action of $\alpha^8$ and $\alpha^9$ is diagonal on $T$:
\begin{equation}
\mathrm{d}
\begin{pmatrix}
T_{-8}\\T_{-6}\\T_{-4}\\T_{-2}\\T_{ 0}\\T_{ 2}\\T_{4}\\T_{6}\\T_{8}
\end{pmatrix}
\equiv
\begin{pmatrix}
T_{-8}\\T_{-6}\\T_{-4}\\T_{-2}\\T_{ 0}\\T_{ 2}\\T_{4}\\T_{6}\\T_{8}
\end{pmatrix}\alpha^8 
+
\begin{pmatrix}
16T_{-8}\\12T_{-6}\\8T_{-4}\\4T_{-2}\\0\\-4T_{ 2}\\-8T_{4}\\-12T_{6}\\-16T_{8}
\end{pmatrix}\alpha^9,
\mod \alpha^1, \ldots, \alpha^7.
\end{equation}
So, while there is no natural map from $W$ to $\mathcal{O}_J(\mathcal{B})$,
the local Lie group $G$ generated by the fiber actions of $H$ and $I_9$
(corresponding to $\alpha^8$ and $\alpha^9$) induces a map $W \to
\mathcal{O}_J(\mathcal{B})/G$ that one could also call $T$.

Theorem 11.1.1 of \cite{Stormark2000} implies that
any 7-dimensional manifold with structure equations of this form must be 
diffeomorphic to $\Sigma_f$ for some hyperbolic planar PDE $f$. 
\end{proof}

\begin{cor}
For any Hessian hydrodynamic PDE, the level sets of the Riemann invariants 
are the solutions of planar hyperbolic PDEs.
\label{cor-bisecant-hyp}
\end{cor}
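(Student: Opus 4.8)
The plan is to read the statement off from Theorem~\ref{W} together with Corollary~\ref{holonomic-net}, since almost all the work is already done. A Hessian hydrodynamic PDE $F(u_{ij})=0$ carries, over $M = F^{-1}(0)$, the 2,3-integrable $GL(2)$-structure $\pi:\mathcal{B}\to M$ of Theorem~\ref{thm:F23int}. First I would fix a tri-secant $N^3\subset M$; by Corollary~\ref{holonomic-net}, $N$ is triply foliated by bi-secant surfaces and the characteristic net $\mathbf{T}N\cap\mathbf{C}$ is a coordinate net, and by the remark immediately following that corollary the coordinates $(s^1,s^2,s^3)$ are precisely the Riemann invariants of the hydrodynamic reduction. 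Hence each level set $\{s^i = \mathrm{const}\}$ inside $N$ is an integral surface of the bi-secant plane field $K_i = \ker(\mathrm{d}s^i)$, that is, a bi-secant surface.

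Next I would invoke Theorem~\ref{W} with $v = T(b)$ for some $b \in \mathcal{B}_p$: over a neighborhood $M' \subset M$ of $p$, the linear Pfaffian ideal $\mathcal{I}$ of Theorem~\ref{bisecant} --- whose integral surfaces are exactly the lifts of bi-secant surfaces in $M'$ --- is the pull-back under a submersion $f:\mathcal{B}(M')\to W$ of a hyperbolic linear Pfaffian ideal $\bar{\mathcal{I}}$ on a $7$-manifold $W$, and there is a hyperbolic planar PDE $g$ and a diffeomorphism $\psi:\Sigma_g \to W$ carrying the contact-induced ideal on $\Sigma_g\subset\mathbb{J}^2(\mathbb{R}^2,\mathbb{R})$ to $\bar{\mathcal{I}}$. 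The integral surfaces of the contact-induced ideal on $\Sigma_g$ are exactly the $2$-jet graphs of solutions of $g$.

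Then I would trace a bi-secant surface along this chain. As in the proof of Theorem~\ref{bisecant}, a bi-secant surface $N^2 \subset M'$ lifts, after a $GL(2)$ frame adaptation, to a surface $\tilde{N}\subset\mathcal{B}(M')$ integral to $\mathcal{I}$; because the Cauchy characteristics of $\mathcal{I}$ are exactly the fibers of $f$ and $\omega$ is semi-basic for $f$, the image $f(\tilde{N})\subset W$ is an integral surface of $\bar{\mathcal{I}}$, hence --- via $\psi^{-1}$ --- the jet-graph of a solution of $g$, and conversely every solution of $g$ arises this way. Combined with the first paragraph, this shows that the level sets of the Riemann invariants are bi-secant surfaces and therefore solutions of the hyperbolic planar PDE $g$, which is the assertion. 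I would also note that the point-type of $g$ --- Monge--Amp\`ere, Goursat, or generic --- is recorded by the vanishing of the coefficients $U_1, U_2$ of Theorem~\ref{W}, i.e.\ by the extreme weights $T_{-8}$ and $T_{8}$ of the octic torsion.

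The step I expect to require the most care is the compatibility of the frame adaptation in Theorem~\ref{bisecant} with the Cauchy-characteristic reduction to $W$: the lift $\tilde{N}$ is a priori defined only up to the $GL(2)$ motion used to place $b(\mathbf{T}_pN^2)$ in the standard position $\mathrm{span}\{x^4,y^4\}$, so one must check this can be performed coherently along $N^2$ and is consistent with the foliation whose leaf space is $W$. This should not be serious, since $W$, $\mathcal{I}$, and $\bar{\mathcal{I}}$ are all constructed before any bi-secant plane is chosen; in particular all three foliations $K_1, K_2, K_3$ of a single tri-secant $N^3$ descend to solutions of the \emph{same} PDE $g$, even though $g$ itself varies with the base point in $M$ and with the value of the torsion.
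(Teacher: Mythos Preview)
Your argument is correct and follows exactly the paper's approach: combine Corollary~\ref{holonomic-net} (the level sets of the Riemann invariants are the bi-secant surfaces foliating a tri-secant $3$-fold) with Theorem~\ref{W} (the bi-secant EDS descends to the contact system of a planar hyperbolic PDE). The paper's own proof is just two sentences to this effect; your version simply spells out the Cauchy-characteristic descent and the frame-adaptation compatibility that the paper leaves implicit, and your final remark on $U_1,U_2$ is extra content belonging to the subsequent corollary rather than this one.
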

\begin{proof}
The manifold $W$ arises from the ideal describing bi-secant surfaces in $M$,
and Corollary~\ref{holonomic-net} shows that these are the level sets of the
Riemann invariants of the Hessian hydrodynamic PDE defining $\mathcal{B}$.
\end{proof}

This corollary is not at all surprising, since the entire point of
hydrodynamic reduction is to reduce a hyperbolic PDE in three
variables to a family of hyperbolic planar PDEs defined by the Riemann
invariants.

In the case of 2,3-integrability, Corollary~\ref{cor-bisecant-hyp} also
provides a more explicit justification for the claim in Theorem~\ref{bisecant}
that the PDE defining bi-secant surfaces can be solved in the smooth category
with smooth initial data.  One naturally asks ``to which $\Sigma_f$ is $W$
equivalent?''

\begin{cor}
Let $\Sigma_f  = f^{-1}(0) \subset \mathbb{J}^2(\mathbb{R}^2,\mathbb{R})$ for
a hyperbolic planar PDE $f$.  Suppose there is a local
diffeomorphism $\psi:\Sigma_f \to W$ with $\tilde{\pi}(b) \in \psi(\Sigma_f)$.
\begin{enumerate}
\item If $x=0$ and $y=0$ are both roots of $T(b)$, then 
$\Sigma_f$ is of the Monge--Amp\`ere type at $\psi^{-1}(\tilde{\pi}(b))$,
\item If exactly one of $x=0$ or $y=0$ is a root of $T(b)$, then
$\Sigma_f$ is of the Goursat type at $\psi^{-1}(\tilde{\pi}(b))$,
\item If neither $x=0$ nor $y=0$ is a root of $T(b)$, then
$\Sigma_f$ is of the generic type at $\psi^{-1}(\tilde{\pi}(b))$.
\label{hyperbolicPDEs}
\end{enumerate}
\label{corU1U2}
\end{cor}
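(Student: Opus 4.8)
The plan is to read off the three cases directly from the structure equations in Equation~(\ref{U1U2}) by invoking the known contact classification of second-order hyperbolic planar PDEs. Since $\psi$ realizes $W$ as an open set in $\Sigma_f$ compatibly with the contact-induced structure equations (as in Theorem~\ref{W}), it is a contact equivalence; and the Monge--Amp\`ere, Goursat, and generic types are pointwise contact invariants, so the type of $\Sigma_f$ at $\psi^{-1}(\tilde\pi(b))$ is governed entirely by the structure equations of $W$ near $\tilde\pi(b)$, hence by the values of $U_1$ and $U_2$ at $\tilde\pi(b)$.

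First I would recall, from the equivalence-problem analyses of \cite{Gardner1993, The2008, Stormark2000}, that once the structure equations of a hyperbolic $\Sigma_f$ are brought to the normalized shape of Equation~(\ref{U1U2}) --- that is, the shape produced by the relabeling of Appendix~A of \cite{The2008} --- the coefficients $U_1$ and $U_2$ are exactly the fundamental relative invariants of the two characteristic (Monge) systems of $\Sigma_f$: the $i$-th characteristic system is integrable at a point precisely when $U_i$ vanishes there. The classification then reads: $\Sigma_f$ is of Monge--Amp\`ere type at a point iff both characteristic systems are integrable there, i.e.\ $U_1 = U_2 = 0$; of Goursat type iff exactly one of them is integrable, i.e.\ exactly one of $U_1,U_2$ vanishes; and of generic type iff neither vanishes. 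The one delicate point in this step is matching normalization conventions --- checking that the $U_1,U_2$ of Equation~(\ref{U1U2}) are the invariants defining these classes and not merely functions of them --- but since the relabeling in the proof of Theorem~\ref{W} was carried out expressly to produce The's normal form, this is bookkeeping against the cited references rather than new work.

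It then remains to translate the vanishing of $U_i$ into a root condition on $v = T(b)$. By Theorem~\ref{W}, $U_1 = -645120\,T_{-8}\,(\mu_3)^3/(\mu_2\mu_1)$ and $U_2 = -645120\,T_{8}\,(\mu_2)^3/(\mu_3\mu_1)$ with $\mu_1,\mu_2,\mu_3$ nowhere zero, so at $\tilde\pi(b)$ one has $U_1 = 0$ iff $T_{-8}(b) = 0$ and $U_2 = 0$ iff $T_{8}(b) = 0$. Under the identification $\mathcal{V}_8 \cong \mathbb{R}^9$, $T_{-8}$ and $T_{8}$ are, up to nonzero binomial constants, the coefficients of $x^8$ and of $y^8$ in the binary octic $v$; hence $T_{-8}(b) = 0$ iff $y \mid v$, i.e.\ iff ``$y=0$'' is a root of $T(b)$, and $T_{8}(b) = 0$ iff $x \mid v$, i.e.\ iff ``$x=0$'' is a root of $T(b)$. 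Combining: both of $x=0$ and $y=0$ are roots of $T(b)$ precisely when $U_1 = U_2 = 0$, giving case (1); exactly one is a root precisely when exactly one of $U_1,U_2$ vanishes, giving case (2); and neither is a root precisely when neither vanishes, giving case (3). All conditions here are pointwise, matching the pointwise assertion of the corollary. The only genuine obstacle is the convention-matching in the middle paragraph; the rest is substitution into Equation~(\ref{U1U2}) together with the elementary algebra of linear factors of binary forms.
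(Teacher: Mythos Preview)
Your proposal is correct and follows essentially the same approach as the paper's own proof: invoke the pointwise classification of hyperbolic planar PDEs from \cite{Gardner1993, The2008} as being determined by the vanishing pattern of $U_1$ and $U_2$, then use the explicit formulas from Theorem~\ref{W} together with the nonvanishing of the $\mu_i$ to reduce to the vanishing of $T_{-8}$ and $T_{8}$, and finally read this off as the condition that $y=0$ (resp.\ $x=0$) is a root of the binary octic $T(b)$. The paper's version is terser --- it simply writes $U_1 \sim T_{-8}$ and $U_2 \sim T_{8}$ and states the root correspondence --- but your added remarks about the Monge characteristic systems and the linear-factor algebra are correct elaborations, not a different argument.
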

\begin{proof}
These are the three pointwise types of planar hyperbolic PDEs, and
for structure equations of the form in Equation~(\ref{U1U2}), they are
determined by whether $U_1$ and $U_2$ vanish \cite{Gardner1993,The2008}.  
Since $U_1 \sim T_{-8}$, $U_1 = 0$ if and only if $y=0$ is a
root of $T(b)\in \mathcal{V}_8$.  Similarly, $U_2 =0$ if and only if $x=0$ is
a root of $T(b) \in \mathcal{V}_8$.
\end{proof}

Corollary~\ref{corU1U2} puts interesting restrictions on which
$GL(2)$-structures can yield which planar PDEs.  In particular, $x=0$ and $y=0$
can both be roots of $v=T(b)$ if and only if $[v]$ is a root type having two
distinct real roots, and the strictly complex root types cannot have $x=0$ or
$y=0$ as roots.  Notably, the type of $W$ can change.  For example, suppose
$T(b)=x^7y$, so $\psi^{-1}(\tilde{\pi}(b))$ is of the Monge--Amp\`ere type but
nearby $\psi^{-1}(\tilde{\pi}(b'))$ is of the generic type, since
$T(b')=(x+\varepsilon_1y)^7(\varepsilon_2x + y)$.  This type-changing does not
occur for the flat structure, and it is easy to compute a change-of-frame from
the structure equations of the flat $W$ to the structure equations of
$\Sigma_f$ for the planar wave equation $z_{12}=0$.

\begin{cor}
Let $(\mathcal{B},M,p)_{2,3}$ be flat. Then $W$ is
isomorphic to $\{z_{12}=0\} \subset
\mathbb{J}^2(\mathbb{R}^2,\mathbb{R})$.
\end{cor}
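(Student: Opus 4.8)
The plan is to combine Theorem~\ref{W} with the rigidity of the flat hyperbolic planar equation $z_{12}=0$.

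First I would record what flatness means. By the discussion in Section~\ref{symmred}, a flat $(\mathcal{B},M,p)_{2,3}$ is one with essential torsion $T\equiv 0$, so that the structure equations (\ref{eqn:int3-n4-str}) collapse to those of the $9$-dimensional local Lie group obtained by setting $T=0$; in particular $\mathrm{d}\lambda=0$, $\mathrm{d}\varphi=-\frac12\pair{\varphi,\varphi}_1$, $\mathrm{d}\omega=-\pair{\varphi,\omega}_1-\pair{\lambda,\omega}_0$, and every structure function of $\mathcal{B}$ is constant.

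Next I would apply Theorem~\ref{W} to this $\mathcal{B}$. It produces the $7$-dimensional manifold $W$, the submersion $f:\mathcal{B}(M')\to W$, and a coframe $(\beta^1,\dots,\beta^7)$ on $W$ satisfying Equation~(\ref{U1U2}) with
\[
U_1=-645120\,T_{-8}\,\frac{(\mu_3)^3}{\mu_2\mu_1},\qquad
U_2=-645120\,T_{8}\,\frac{(\mu_2)^3}{\mu_3\mu_1}.
\]
Since $T\equiv 0$ we have $T_{-8}=T_8=0$, hence $U_1\equiv U_2\equiv 0$ on $W$. Moreover, because all the structure functions $C^i_{jk}$ and $C^i_{jr}$ of $\mathcal{B}$ are constant when $T=0$, the Cauchy-characteristic reduction carried out in the proof of Theorem~\ref{W} produces a fully explicit constant-coefficient coframing on the flat $W$; I would write these equations out, so that (\ref{U1U2}) with $U_1=U_2=0$ is supplemented by the remaining structure equations forced by $T=0$.

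Finally I would identify $W$ with $\Sigma_{f}=\{z_{12}=0\}\subset\mathbb{J}^2(\mathbb{R}^2,\mathbb{R})$, which is $7$-dimensional with coordinates $(\xi^1,\xi^2,z,z_1,z_2,z_{11},z_{22})$. I would present the pulled-back contact system on $\Sigma_{f}$ in a convenient coframe (for instance the normal form of Appendix~A of \cite{The2008}), compute its structure equations, and exhibit the explicit linear change of coframe intertwining it with those of the flat $W$; since both sides are constant-coefficient, this is a routine linear-algebra check. Alternatively, and more conceptually, one may observe via Corollary~\ref{corU1U2} that the flat $W$ is everywhere of Monge--Amp\`ere type with \emph{both} invariants $U_1,U_2$ vanishing identically, and invoke the classical fact that a hyperbolic second-order planar PDE with $U_1\equiv U_2\equiv 0$ is contact-equivalent to the wave equation $z_{12}=0$ \cite{Gardner1993, The2008, Stormark2000}. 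Either route yields the claimed local isomorphism $W\cong\{z_{12}=0\}$. The only step with genuine content is this last identification, and it is not a real obstacle --- it is precisely the explicit change of frame alluded to just before the statement of the corollary.
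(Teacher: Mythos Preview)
Your primary approach---setting $T\equiv 0$, deducing $U_1=U_2=0$ from Theorem~\ref{W}, writing out the resulting constant-coefficient structure equations of the flat $W$, and then exhibiting an explicit change of coframe to the contact-induced coframe on $\{z_{12}=0\}$---is exactly what the paper does. Indeed, the paper offers no proof beyond the sentence immediately preceding the corollary, where it asserts that this change of frame is ``easy to compute.''

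However, the alternative ``conceptual'' route you propose is not valid as stated. The condition $U_1\equiv U_2\equiv 0$ is precisely the condition that the hyperbolic planar PDE be of Monge--Amp\`ere type (this is the content of Corollary~\ref{corU1U2}), but it is \emph{not} a classical fact that every hyperbolic Monge--Amp\`ere equation is contact-equivalent to $z_{12}=0$. For example, $z_{11}z_{22}-z_{12}^2=-1$, or Liouville's equation $z_{12}=e^z$, are hyperbolic Monge--Amp\`ere equations (hence have $U_1=U_2=0$ in this normalization) yet are not contact-equivalent to the planar wave equation. Singling out $z_{12}=0$ among Monge--Amp\`ere equations requires further invariants---equivalently, the \emph{full} structure equations of $W$, not just the vanishing of $U_1$ and $U_2$. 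So you should drop the alternative and rely solely on the direct coframe computation, which is what the paper intends.
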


\section{Concluding Remarks}
The main results of this article are summarized in
Figure~\ref{fig:classes-equiv}.  In short, Hessian hydrodynamic PDEs in
three independent variables are equivalent to local 2,3-integrable
$GL(2)$-structures of degree 4, and both objects admit a geometric,
coordinate-free classification by the singular foliation of $\mathbb{R}^9$ shown in the
figure.

\begin{figure}
\begin{centering}
\includegraphics[width=\textwidth]{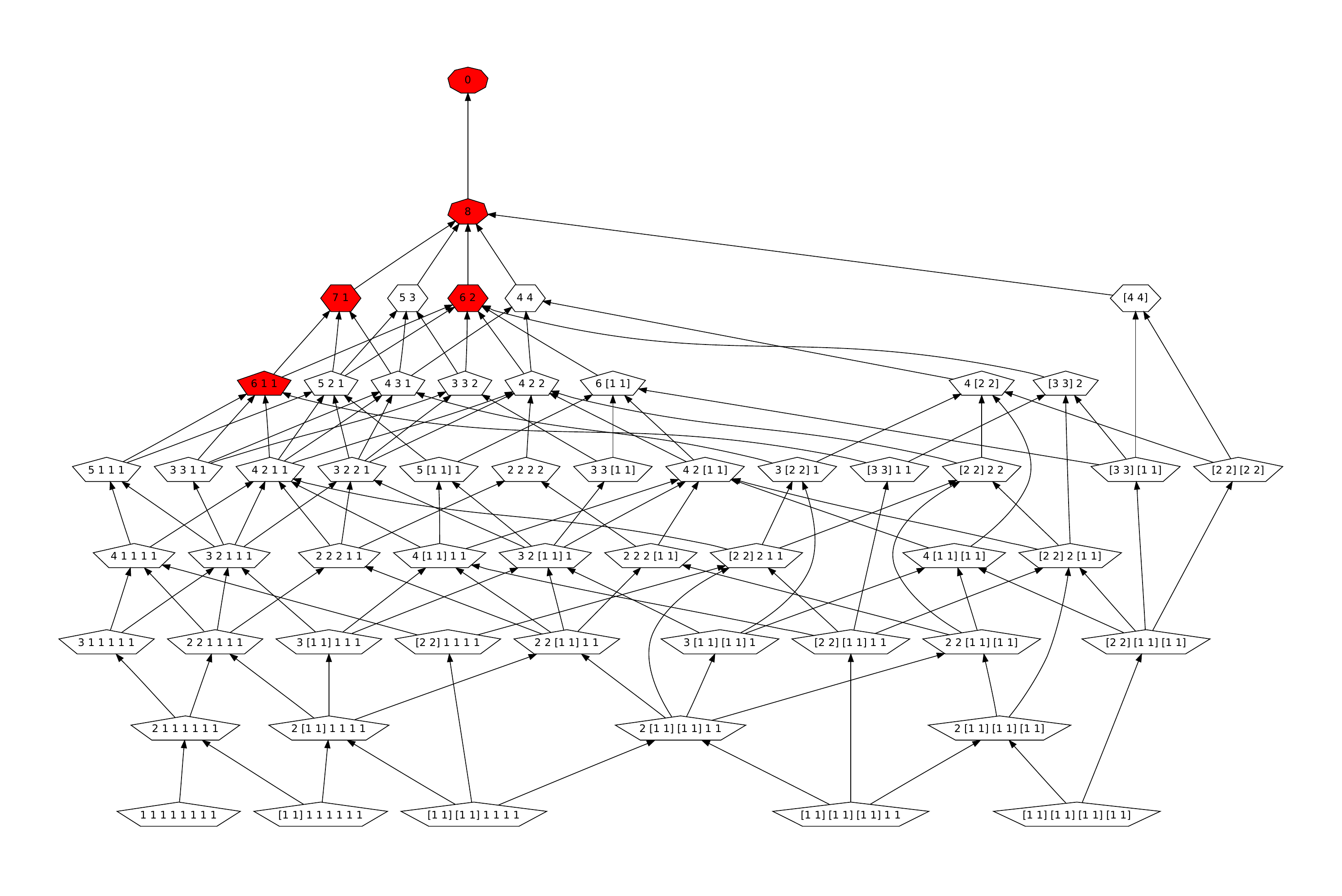}
\caption{The leaf-classification of all $(\mathcal{B},M,p)_{2,3}$.
The number of sides on a node is the dimension of the bundle after the symmetry
reduction from Section~\ref{symmred}. 
Representative PDEs for the shaded nodes are included in Section~\ref{PDEs}.}
\label{fig:classes-equiv} 
\end{centering}
\end{figure}

Lemma~\ref{matrixmiracles} seems to be a miraculous coincidence.  The
bluntness of this relationship between $v$ and $J(v)$ prompted me to
investigate relationships between the roots of $v$ and the structure of the
leaf $\mathcal{O}_J(v)$, yielding this project's main result,
Theorem~\ref{classification}.  It appears that no such relationship holds for
2,3-integrable $GL(2)$-structures of degree $n \geq 5$, even though versions of
Theorem~\ref{int3-n4-str} and Lemma~\ref{leaves} exist in those cases.  In
general, intransitive groupoids and pseudo-groups are very poorly understood,
and it is generally impossible to explicitly write down the integral manifolds
of any given singular distribution.  If it were not for the coincidence in
this case, the leaf-equivalence classes would have little utility in
understanding the Hessian hydrodynamic PDEs.

Of course, Hessian hydrodynamic PDEs on $u:\mathbb{R}^3 \to \mathbb{R}$ are
not the only integrable PDEs of interest in mathematics and physics.   Two
generalizations are important to consider:
\begin{enumerate}
\item[Q1] Integrability should be a contact-invariant property of a PDE.  The
Hessian-only form
$F(u_{11},u_{12}, u_{13}, u_{22}, u_{23}, u_{33})=0$ is not preserved under
the full family of contact transformations.  However, the associated
$GL(2)$-structures apparently relied on this form and its associated $CSp(3)$
transformations.  How can this classification be extended to second-order PDEs
in three independent variables that also include lower derivatives?  How does
the $GL(2)$ geometry generalize to these PDEs?
The observations and computations of some recent articles may prove 
very useful \cite{Burovskiy2008} \cite{Ferapontov2003}.

\item[Q2] Very few integrable PDEs are known to exist in more than three
independent variables, but equations of the form $F(u_{11}, \ldots, u_{NN})=0$
can sometimes yield distributions of rational normal cones of degree $n$ on
hypersurfaces $M^n =F^{-1}(0) \subset \mathrm{Sym}^2(\mathbb{R}^N)$ with
$n=\frac12N(N+1)-2$.  Results similar to those in Section~\ref{3int} are known
for 2,3-integrability for degrees 5 through 20, but the foliation by groupoid
    orbits of $\mathcal{V}_{n+4}$ is not understood, and $k$-integrability is
    extremely restrictive \cite{Smith2009}.  What can integrable GL(2)
    geometry say about the existence of integrable PDEs in more variables?
    \end{enumerate}

On a more detailed level, it would be interesting to study the foliation that
appears in the present case.  Despite the significant computational
difficulties, it is important both to produce representative PDEs for each
root type and to find the root types of the well-known Hessian hydrodynamic
PDEs.  For example, if one can produce Hessian hydrodynamic PDEs as well as
contact-equivalent PDEs that involve lower-order terms, such computations
could open the door to Q1, above.  Additionally, the lack of surjectivity of
$T:\mathcal{B}\to \mathcal{O}_J(\mathcal{B})$ is irritating.  What is the
exact relationship between two PDEs that are leaf-equivalent but do not have
overlapping torsion? 

Finally, the relationship between $W$, which describes bi-secant surfaces in
$M$, and $\Sigma_f$, which arises from a planar hyperbolic PDE, is worth
pursuing.  What is is the nature of the correspondence?  Can every hyperbolic
planar PDE appear this way? Can this correspondence provide any new
information about the Riemann invariants of the Hessian hydrodynamic PDE?

\appendix

\section{The Matrix J(T)}\label{appxJ}
For reference, here is the matrix $J(T)$, listed by column.  Note! In the $\omega$
columns, the common factor of $9216=2^{10}3^2$ has been removed for clarity.

The $9216\ \omega^{-4}$ column:
\[\begin{pmatrix}280T_{-8}T_{4}-280T_{-6}T_{2}\\
 -245T_{-4}T_{2}+70T_{-8}T_{6}+175T_{-6}T_{4}\\
 70T_{-4}T_{4}-210T_{-2}T_{2}+130T_{-6}T_{6}+10T_{-8}T_{8}\\
 -175T_{0}T_{2}-35T_{-2}T_{4}+35T_{-6}T_{8}+175T_{-4}T_{6}\\
 84T_{-4}T_{8}+196T_{-2}T_{6}-140T_{2}^2-140T_{0}T_{4}\\
 -350T_{2}T_{4}+175T_{0}T_{6}+175T_{-2}T_{8}\\
 350T_{0}T_{8}-350T_{4}^2\\
 700T_{2}T_{8}-700T_{6}T_{4}\\
 -1400T_{6}^2+1400T_{8}T_{4}\end{pmatrix}\]

The $9216\ \omega^{-2}$ column:
\[\begin{pmatrix}-1400T_{-8}T_{2}+1400T_{-6}T_{0}\\
 -385T_{-8}T_{4}-840T_{-6}T_{2}+1225T_{-4}T_{0}\\
 -280T_{-4}T_{2}+1050T_{-2}T_{0}-70T_{-8}T_{6}-700T_{-6}T_{4}\\
 -910T_{-4}T_{4}+280T_{-2}T_{2}+875T_{0}^2-240T_{-6}T_{6}-5T_{-8}T_{8}\\
 1540T_{0}T_{2}-952T_{-2}T_{4}-28T_{-6}T_{8}-560T_{-4}T_{6}\\
 -105T_{-4}T_{8}-1120T_{-2}T_{6}+1400T_{2}^2-175T_{0}T_{4}\\
 2100T_{2}T_{4}-1750T_{0}T_{6}-350T_{-2}T_{8}\\
 2450T_{4}^2-1050T_{0}T_{8}-1400T_{2}T_{6}\\
 -2800T_{2}T_{8}+2800T_{6}T_{4}\end{pmatrix}\]

The $9216\ \omega^{ 0}$ column:
\[\begin{pmatrix}-2800T_{-6}T_{-2}+2800T_{-8}T_{0}\\
 -2450T_{-4}T_{-2}+875T_{-8}T_{2}+1575T_{-6}T_{0}\\
 210T_{-8}T_{4}-2100T_{-2}^2+1540T_{-6}T_{2}+350T_{-4}T_{0}\\
 1890T_{-4}T_{2}-2625T_{-2}T_{0}+35T_{-8}T_{6}+700T_{-6}T_{4}\\
 1568T_{-4}T_{4}+336T_{-2}T_{2}-2100T_{0}^2+192T_{-6}T_{6}+4T_{-8}T_{8}\\
 -2625T_{0}T_{2}+1890T_{-2}T_{4}+35T_{-6}T_{8}+700T_{-4}T_{6}\\
 210T_{-4}T_{8}+1540T_{-2}T_{6}-2100T_{2}^2+350T_{0}T_{4}\\
 -2450T_{2}T_{4}+1575T_{0}T_{6}+875T_{-2}T_{8}\\
 2800T_{0}T_{8}-2800T_{2}T_{6}\end{pmatrix}\]

The $9216\ \omega^{ 2}$ column:
\[\begin{pmatrix}2800T_{-4}T_{-6}-2800T_{-2}T_{-8}\\
 -1400T_{-6}T_{-2}-1050T_{-8}T_{0}+2450T_{-4}^2\\
 2100T_{-4}T_{-2}-350T_{-8}T_{2}-1750T_{-6}T_{0}\\
 -105T_{-8}T_{4}+1400T_{-2}^2-1120T_{-6}T_{2}-175T_{-4}T_{0}\\
 -952T_{-4}T_{2}+1540T_{-2}T_{0}-28T_{-8}T_{6}-560T_{-6}T_{4}\\
 -910T_{-4}T_{4}+280T_{-2}T_{2}+875T_{0}^2-240T_{-6}T_{6}-5T_{-8}T_{8}\\
 1050T_{0}T_{2}-280T_{-2}T_{4}-70T_{-6}T_{8}-700T_{-4}T_{6}\\
 1225T_{0}T_{4}-385T_{-4}T_{8}-840T_{-2}T_{6}\\
 1400T_{0}T_{6}-1400T_{-2}T_{8}\end{pmatrix}\]

The $9216\ \omega^{ 4}$ column:
\[\begin{pmatrix}1400T_{-4}T_{-8}-1400T_{-6}^2\\
 -700T_{-4}T_{-6}+700T_{-2}T_{-8}\\
 -350T_{-4}^2+350T_{-8}T_{0}\\
 -350T_{-4}T_{-2}+175T_{-8}T_{2}+175T_{-6}T_{0}\\
 84T_{-8}T_{4}-140T_{-2}^2+196T_{-6}T_{2}-140T_{-4}T_{0}\\
 -35T_{-4}T_{2}-175T_{-2}T_{0}+35T_{-8}T_{6}+175T_{-6}T_{4}\\
 70T_{-4}T_{4}-210T_{-2}T_{2}+130T_{-6}T_{6}+10T_{-8}T_{8}\\
 70T_{-6}T_{8}-245T_{-2}T_{4}+175T_{-4}T_{6}\\
 -280T_{-2}T_{6}+280T_{-4}T_{8}\end{pmatrix}\]

The $\lambda$, $\varphi_{-2}$, $\varphi_{0}$, and $\varphi_{2}$ columns:
\[\begin{pmatrix}T_{-8}\\
 T_{-6}\\
 T_{-4}\\
 T_{-2}\\
 T_{0}\\
 T_{2}\\
 T_{4}\\
 T_{6}\\
 T_{8}\end{pmatrix},
\begin{pmatrix}-16T_{-6}\\
 -14T_{-4}\\
 -12T_{-2}\\
 -10T_{0}\\
 -8T_{2}\\
 -6T_{4}\\
 -4T_{6}\\
 -2T_{8}\\
 0\end{pmatrix},
\begin{pmatrix}16T_{-8}\\
 12T_{-6}\\
 8T_{-4}\\
 4T_{-2}\\
 0\\
 -4T_{2}\\
 -8T_{4}\\
 -12T_{6}\\
 -16T_{8}\end{pmatrix},
\begin{pmatrix}0\\
 2T_{-8}\\
 4T_{-6}\\
 6T_{-4}\\
 8T_{-2}\\
 10T_{0}\\
 12T_{2}\\
 14T_{4}\\
 16T_{6}\end{pmatrix}\]

\bibliographystyle{amsplain}
\bibliography{GL2-PDEs}

\end{document}